\let\oldtocsection=\tocsection
\let\oldtocsubsection=\tocsubsection
\let\oldtocsubsubsection=\tocsubsubsection
\renewcommand{\tocsection}[2]{\hspace{0em}\oldtocsection{#1}{#2}}
\renewcommand{\tocsubsection}[2]{\hspace{1em}\oldtocsubsection{#1}{#2}}
\renewcommand{\tocsubsubsection}[2]{\hspace{2em}\oldtocsubsubsection{#1}{#2}}
\newlength{\margins}
\numberwithin{equation}{section}
\newtheorem{Ass}[subsection]{Assumption}
\newtheorem{Thm}[subsection]{Theorem}
\newtheorem{Prop}[subsection]{Proposition}
\newtheorem{Cor}[subsection]{Corollary}
\newtheorem{Lem}[subsection]{Lemma}
\newtheorem{Conj}[subsection]{Conjecture}
\theoremstyle{definition}
\newtheorem{Def}[subsection]{Definition}
\newtheorem{Exa}[subsection]{Example}
\newtheorem{Rmk}[subsection]{Remark}
\begin{document}

\title[A uniform construction of smooth integral models and local densities]
{A uniform construction of smooth integral models and a recipe for computing local densities}

\author[Sungmun Cho]{Sungmun Cho}
\address{Department of
Mathematics, University of Toronto,  CANADA}
\email{sungmuncho12@gmail.com}

\maketitle

\begin{abstract}
In this paper, we explain a  simple and uniform construction of a smooth integral model associated to
a quadratic, (anti)-hermitian, and (anti)-quaternionic hermitian  lattice defined over an arbitrary  local field.
As one major application, we explain a recipe for computing local densities case by case,
which is an essential factor in the classification of  forms as above over the ring of integers of a number field,
 by introducing one conjecture about the number of rational points of the special fiber of a smooth integral model.
\end{abstract}

MSC code :   11E41,   14L15,  20G25 (primary),     11E95,   11E08, 11E39,  11E57 (secondary)

\section{Introduction}
\subsection{Introduction}
A long standing central problem in the arithmetic theory of hermitian (or quadratic) forms is the classification of hermitian (or quadratic) forms over the ring of integers of a number field.
If we let $R$ be the ring of integers of a number field $k$,
then for  a totally definite  hermitian (or quadratic) $R$-lattice $(L, H)$,
the genus of $(L, H)$, denoted by $\mathrm{gen}(L, H)$, is defined as  the set of (equivalence classes of)  hermitian (or quadratic) lattices
 that are locally equivalent to $(L, H)$.
 Since the local-global principle does not hold for a hermitian (or quadratic) lattice $(L, H)$, the set $\mathrm{gen}(L, H)$ is not trivial in general.
 However, it is well-known that  $\mathrm{gen}(L, H)$ is a finite set.
The computation of the total mass of $(L, H)$ ($=\sum_{(L^{\prime}, H^{\prime}) \in \mathrm{gen}(L, H)} \frac{1}{\#\mathrm{Aut}_{R}(L^{\prime}, H^{\prime})}$)
 is an essential ingredient for enumerating all elements of the set $\mathrm{gen}(L, H)$ explicitly.
The total mass of $(L, H)$ can be expressed as a product of local factors, the so-called local densities, by the celebrated Smith-Minkowski-Siegel mass formula.

The local density is defined as follows.
To simplify notation, we  consider a quadratic $A$-lattice  $(L, h)$ at this moment.
 Here, $A$ is the ring of integers of a local field $F$ with a uniformizer $\pi$ and the residue field $\kappa$.
 The local density was originally defined as the  limit of a certain sequence (\cite{K}), which is described below:
$$ \beta_{L}=\frac{1}{2}\cdot \lim_{N\to\infty} q^{-N \mathrm{dim}G} \# \underline{G}^{\prime}(A/\pi^NA). $$
Here $\underline{G}^{\prime}$ is a naive integral model of  the orthogonal group $\mathrm{O}(V, h)$, where $V=L\otimes_AF$, such that
$\underline{G}^{\prime}(R)=\mathrm{Aut}_{R}(L\otimes_AR, h\otimes_AR)$ for any commutative $A$-algebra $R$,
and $q$ is the cardinality of $\kappa$.
Note that this limit stabilizes after finite steps, and the formula is found in \cite{CS} and \cite{HS} for $\mathbb{Z}_p$.
 However, for a given quadratic lattice defined over a finite (especially ramified) extension of $\mathbb{Z}_2$,
 it is a nontrivial task to compute the above limit.
 Later, W. T. Gan and J.-K. Yu  found another approach for computing  local densities in \cite{GY}.
(We explain their approach here briefly
and for a detailed exposition  including a general lattice such as a hermitian lattice,  see Section 3 of \cite{GY}.)
 The local density of a quadratic $A$-lattice  $(L, h)$ can also be defined as an integral of a certain volume form
  $\omega^{\mathrm{ld}}$ associated to $\underline{G}^{\prime}$, which is described below:
 $$ \beta_{L}=\frac{1}{2}\int_{\mathrm{Aut}_{A}(L, h)} |\omega^{\mathrm{ld}}|.$$
On the other hand, the following integral is well known.
$$\int_{\mathrm{Aut}_{A}(L, h)} |\omega^{\mathrm{can}}|= q^{- \mathrm{dim}G}\cdot \# \underline{G}(A/\pi A).$$
Here $\underline{G}$ is the unique smooth affine group scheme (called a smooth integral model) of  $\mathrm{O}(V, h)$
such that
$\underline{G}(R)=\underline{G}^{\prime}(R)$
for any \'etale $A$-algebra $R$ (Proposition 3.7 in \cite{GY})
and $\omega^{\mathrm{can}}$ is a volume form associated to $\underline{G}$.
Therefore, in order to obtain an explicit formula for the local density, it suffices to
\begin{itemize}
\item determine $\underline{G}$ and the number of rational points of the special fiber of $\underline{G}$ (i.e. $\# \underline{G}(A/\pi A)$);
\item compare two volume forms $\omega^{\mathrm{ld}}$ and $\omega^{\mathrm{can}}$.
\end{itemize}
 The difference between two volume forms is direct from the construction of the smooth integral model $\underline{G}$.
 Therefore constructing the smooth integral model $\underline{G}$ and investigating the special fiber of $\underline{G}$ will lead us to an explicit formula.

The local density formula together with a smooth integral model $\underline{G}$ and its special fiber has been fully studied in \cite{GY} $(p\neq 2)$, \cite{C1}
(quadratic lattice with $A/\mathbb{Z}_2$ unramified),
and \cite{C2} (ramified hermitian lattice with $A/\mathbb{Z}_2$ unramified).
Indeed, the constructions of $\underline{G}$ in \cite{C1} and \cite{C2}, when $A/\mathbb{Z}_2$  is unramified,
are much more complicated than that of \cite{GY} when $p\neq 2$.
More explicitly, if $p\neq 2$,  then the construction of $\underline{G}$ is  based on the dual lattice $L^{\#}$ of $L$. 
In the case that  $A/\mathbb{Z}_2$  is unramified, one needs sublattices $A_i, B_i, W_i, X_i, Y_i, Z_i$ of $L$, for every nonnegative integer $i$'s, in order to construct $\underline{G}$.
Therefore, if $A/\mathbb{Z}_2$  is ramified, then it is natural to expect that one needs much more involved sublattices of $L$ beyond $A_i, B_i, W_i, X_i, Y_i, Z_i$ in order to  construct  $\underline{G}$.
One can also expect that a construction of $\underline{G}$ in a quadratic lattice is different from that of a hermitian lattice, as in \cite{C1} and \cite{C2}.

In this paper, we explain a uniform construction of a smooth integral model $\underline{G}$ associated to
a quadratic or a hermitian lattice  over any non-Archimedean local field, 
 based only on the dual lattice $L^{\#}$ of $L$. 
Our construction of $\underline{G}$  is   simple and canonical (independent of the choice of basis of $L$).
As one major application, we obtain the following theorem about the local density for a given lattice $(L, h)$:
\begin{Thm}(Theorem 4.3)
Let $\tilde{G}$ (resp. $G$) be the special fiber (resp. generic fiber) of $\underline{G}$ and let $q$ be the cardinality of $\kappa$.
Then the local density of ($L,h$) is
$$\beta_L=\frac{1}{[G:G^o]}q^N \cdot q^{-\mathrm{dim~} G} \cdot \#\tilde{G}(\kappa),$$
where $G^o$ is the identity component of $G$ and $N$ is a suitable  integer.
\end{Thm}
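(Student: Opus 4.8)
The plan is to read the formula off from the two integral expressions for $\beta_L$ recalled in the introduction, the comparison ultimately reducing to a single scalar relating the volume forms $\omega^{\mathrm{ld}}$ and $\omega^{\mathrm{can}}$. First I would observe that the smooth model $\underline{G}$ constructed in the preceding sections satisfies $\underline{G}(A)=\underline{G}^{\prime}(A)=\mathrm{Aut}_{A}(L,h)$: this is its defining property applied to the (trivially) étale $A$-algebra $R=A$, and under the canonical morphism $\underline{G}\to\underline{G}^{\prime}$ both sides are the same subgroup of $G(F)$. Next I would invoke the volume formula for a smooth affine group scheme over $A$ recalled in the introduction: picking $\omega^{\mathrm{can}}$ to be a generator of the free rank-one $A$-module of invariant top-degree differential forms on $\underline{G}$, smoothness yields the surjection $\underline{G}(A)\twoheadrightarrow\tilde{G}(\kappa)$ with fibers $\pi$-adic balls of volume $q^{-\dim G}$, so that $\int_{\underline{G}(A)}|\omega^{\mathrm{can}}|=q^{-\dim G}\cdot\#\tilde{G}(\kappa)$; connectedness of the fibers plays no role here, which is why all of $\tilde{G}(\kappa)$ is counted.

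The one substantive point is the comparison of the two volume forms. Both $\omega^{\mathrm{ld}}$ and $\omega^{\mathrm{can}}$ are nonzero invariant top-degree forms on the generic fiber $G$, so $\omega^{\mathrm{ld}}=c\cdot\omega^{\mathrm{can}}$ for a unique $c\in F^{\times}$; setting $N:=-v(c)$ with $v$ the normalized valuation of $F$ gives $N\in\mathbb{Z}$ and $|\omega^{\mathrm{ld}}|=q^{N}\,|\omega^{\mathrm{can}}|$. Inserting this into the Gan--Yu expression for the local density --- which in the uniform normalization reads $\beta_{L}=\frac{1}{[G:G^o]}\int_{\mathrm{Aut}_{A}(L,h)}|\omega^{\mathrm{ld}}|$, the constant being the classical $\tfrac12$ for quadratic lattices (where $[\mathrm{O}:\mathrm{SO}]=2$) and $1$ for (anti-)hermitian lattices (where $G$ is connected); see Section 3 of \cite{GY} --- and combining with the volume formula above, I obtain
$$\beta_{L}=\frac{1}{[G:G^o]}\int_{\underline{G}(A)}|\omega^{\mathrm{ld}}|=\frac{1}{[G:G^o]}\,q^{N}\int_{\underline{G}(A)}|\omega^{\mathrm{can}}|=\frac{1}{[G:G^o]}\,q^{N}\,q^{-\dim G}\,\#\tilde{G}(\kappa),$$
which is the claim.

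Thus the existence of the integer $N$ is essentially formal, and the part I expect to require the real effort --- and which the statement deliberately leaves implicit by calling $N$ ``a suitable integer'' --- is its explicit determination. Concretely, $\underline{G}$ is obtained from the naive model $\underline{G}^{\prime}$ by a canonical chain of dilatations (Néron blow-ups) governed by the dual lattice $L^{\#}$, and at each stage a generator of the top differentials is multiplied by a controlled power of $\pi$; tracking this through the construction and matching the outcome against Gan--Yu's explicit $\omega^{\mathrm{ld}}$ produces $c=\omega^{\mathrm{ld}}/\omega^{\mathrm{can}}$, hence $N=-v(c)$, in closed form. This computation is precisely the content of the case-by-case analysis in the rest of the paper, where the genuinely explicit local density formula then emerges.
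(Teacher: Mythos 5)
Your reconstruction of the argument matches the route the paper takes: $\underline{G}(A)=\mathrm{Aut}_A(L,h)$ because $A$ is \'etale over itself, $\int_{\underline{G}(A)}|\omega^{\mathrm{can}}| = q^{-\dim G}\cdot\#\tilde G(\kappa)$ by smoothness, $\omega^{\mathrm{ld}}=c\,\omega^{\mathrm{can}}$ since both are invariant top forms on the generic fiber, and the normalizing constant is $[G:G^o]^{-1}$. That chain of observations, all imported from Section 3 of \cite{GY}, is exactly what underlies Theorem 4.3, and for the statement as worded (``$N$ a suitable integer'') your proof is complete.

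Where your account diverges from the paper is in the last paragraph, and the difference is worth flagging because it is the substantive content of the actual Theorem 4.3 in Section 4.2. You describe $N$ as emerging from ``tracking a chain of dilatations (N\'eron blow-ups)'' and a case-by-case computation. That is not how the paper pins $N$ down. The construction of $\underline G$ in Section 3 is not presented as an iterated dilatation; it produces two explicit free $A$-modules $\widetilde T(A)\subseteq M'=\mathrm{End}_B(L)$ and $\widetilde H(A)\subseteq H'=\{\text{hermitian forms on }L\}$, and the comparison of the two volume forms collapses in one step to the ratio of module indices
\[
q^{N}=\frac{\#\bigl(H'/\widetilde H(A)\bigr)}{\#\bigl(M'/\widetilde T(A)\bigr)}.
\]
This is a single discriminant comparison, uniform in all cases, not an iterated bookkeeping of blow-up stages. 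So the part you defer to ``real effort'' is in fact already closed off by the construction; the genuinely hard remaining quantity in the paper is $\#\tilde G(\kappa)$ (the subject of Conjecture 5.10), not $N$. Your proof establishes the stated theorem, but the framing of what remains to be done inverts where the difficulty actually lies.
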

Therefore, in order to compute the local density after constructing $\underline{G}$,
we only have to know two ingredients in the above theorem; $q^N$ and $\#\tilde{G}(\kappa)$.
The ingredient $q^N$ is a quantity of the difference between two volume forms  $\omega^{\mathrm{ld}}$ and $\omega^{\mathrm{can}}$
and this can be computed easily based on the construction of $\underline{G}$, as explained in Theorem 4.3.
Thus, once we construct $\underline{G}$, the remaining challenging ingredient is the computation of $\#\tilde{G}(\kappa)$.

 For a given quadratic lattice $(L, h)$,
 let $L=\bigoplus_{i\geq 0}L_i$ be a Jordan splitting for $(L, h)$.
 Then we have a group homomorphism
$\varphi_{i, \kappa} : \tilde{G}(\kappa) \rightarrow \mathrm{O}(\bar{V_i}, \bar{h_i})(\kappa).$ (see Section 5.9)
Here, $\bar{V_i}$ and $\bar{h_i}$ are defined at the beginning of Section 5.9.
Let $$\varphi_{\kappa}=\prod_i \varphi_{i, \kappa} : \tilde{G}(\kappa) \rightarrow \prod_i \mathrm{O}(\bar{V_i}, \bar{h_i})(\kappa).$$
Then it seems to us that $\#\tilde{G}(\kappa)$ can be computed by proving the following conjecture.
 \begin{Conj}(Conjecture 5.10)
\[\textit{$\varphi_{\kappa}$ is surjective and its kernel is isomorphic to $(\textbf{A}^{l}\times (\mathbb{Z}/2\mathbb{Z})^{\beta})(\kappa)$ as sets,}\]
for $l=\textit{dimension of $\tilde{G}$ - $\sum_i$ (dimension of $\mathrm{O}(\bar{V_i}, \bar{h_i})$)} $ and for a certain non-negative integer $\beta$.
\end{Conj}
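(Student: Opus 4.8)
The plan is to derive the conjecture from a structural description of the special fiber $\tilde G$ that is read off directly from the construction of $\underline G$ given earlier. First I would promote $\varphi_\kappa$ to a morphism of algebraic groups over $\kappa$,
$$\Phi\colon \tilde G\longrightarrow \prod_i \mathrm{O}(\bar V_i,\bar h_i),$$
and reduce the set-theoretic statement about $\kappa$-points to two scheme-theoretic claims: (i) $\Phi$ is faithfully flat, with image all of $\prod_i\mathrm{O}(\bar V_i,\bar h_i)$; and (ii) $\ker\Phi$ is a smooth $\kappa$-group scheme whose identity component $(\ker\Phi)^\circ$ is a split unipotent group of dimension $l$ and whose component group is the constant group $(\mathbb{Z}/2\mathbb{Z})^\beta$. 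Granting these: since $(\ker\Phi)^\circ$ is connected, $H^1(\kappa,(\ker\Phi)^\circ)=0$ by Lang's theorem, so every $\kappa$-point of $\pi_0(\ker\Phi)$ lifts and each fiber has exactly $q^l$ points; hence $\#(\ker\Phi)(\kappa)=q^l\cdot 2^\beta$, and fixing a set-theoretic section of $\ker\Phi\to\pi_0(\ker\Phi)$ over $\kappa$-points yields the asserted bijection $\ker\varphi_\kappa\cong(\mathbf{A}^l\times(\mathbb{Z}/2\mathbb{Z})^\beta)(\kappa)$. Surjectivity of $\varphi_\kappa$ on $\kappa$-points then follows from (i) together with the triviality of the relevant $H^1$, or, more in the explicit spirit of the paper, by writing down preimages of a generating set as below. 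Note that $l$ is \emph{defined} by the dimension identity in the statement, so the real content is (i) and the description of $\ker\Phi$ and its component group.

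For surjectivity I would argue one Jordan layer at a time. It suffices to lift any $g_i\in\mathrm{O}(\bar V_i,\bar h_i)(\kappa)$ placed in a single coordinate. Away from residue characteristic $2$, write $g_i$ as a product of reflections via Cartan--Dieudonn\'e, lift each reflecting vector to a primitive vector of $L_i$ of suitable norm, and check that the associated reflection of $(V,h)$ preserves the Jordan decomposition and induces the identity on $\bar V_j$ for $j\ne i$; the cross-terms between distinct layers vanish modulo the relevant power of $\pi$ precisely because of the scaling of $h$ on each $L_j$ built into the Jordan splitting, and the resulting isometry lies in $\underline G(A)$ by the defining property of $\underline G$. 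When $p=2$ reflections no longer suffice, so I would instead generate the relevant index-two subgroup of each $\mathrm{O}(\bar V_i,\bar h_i)$ by Siegel--Eichler transformations and handle the remaining generator(s) separately; lifting a transvection is again explicit and layer-compatible.

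For the kernel I would use the filtration of $\underline G$ by congruence-type subgroups inherent in the construction: $\ker\Phi$ is the reduction of the subgroup of isometries of $L$ acting as the identity on every $\bar V_i$, and filtering by powers of $\pi$ exhibits $(\ker\Phi)^\circ$ as an iterated extension of vector groups, hence split unipotent over the perfect field $\kappa$ and so isomorphic as a variety to $\mathbf{A}^l$, with $l$ the number in the statement. The factor $(\mathbb{Z}/2\mathbb{Z})^\beta$ should arise entirely from the bottom of this filtration in residue characteristic $2$: there the passage from a quadratic form to its polar bilinear form --- equivalently, the gap between $\mathrm{O}(\bar V_i,\bar h_i)$ and the orthogonal group of the associated reduced bilinear form in characteristic $2$ --- imposes Artin--Schreier/squaring constraints of the shape $x^2=x$ or $x^2=0$, whose solution sets are copies of $\mathbb{Z}/2\mathbb{Z}$ rather than of $\mathbb{G}_a$.

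The main obstacle is exactly this last point, together with the characteristic-$2$ generation issue in the surjectivity step. In residue characteristic $2$, and especially in the ramified case, $\bar h_i$ is a quadratic form over a field of characteristic $2$ whose polar form may be degenerate or alternating, so controlling the reduced layers and enumerating the $x^2=x$-type constraints that produce $\beta$ does not seem amenable to a single uniform argument; one expects instead to verify the conjecture case by case, recovering the answers of \cite{GY}, \cite{C1}, \cite{C2} and extending them to the ramified quadratic and ramified hermitian settings.
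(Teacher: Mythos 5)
The statement you are addressing is a \emph{Conjecture} (Conjecture 5.10); the paper does not prove it in general, and explicitly says it seems unlikely that a uniform proof exists. What the paper actually proves is the conjecture in specific cases: unimodular quadratic lattices of odd rank (Theorem 5.4), a non-unimodular example (Example 5.12), and, by reference, the unramified cases of \cite{C1}, \cite{C2}. Those proofs are carried out by explicitly writing down a formal matrix form for an element $g\in\tilde G(\kappa)$ inherited from the matrix description of $\widetilde T(A)$, reading off $\varphi_\kappa$ in these coordinates, exhibiting a coordinate subgroup $H$ on which $\varphi_\kappa$ restricts surjectively, and then listing the block equations of ${}^tg\delta g=\delta$ that cut out $\ker\varphi_\kappa$ so as to recognize $\mathbf{A}^l\times(\mathbb{Z}/2\mathbb{Z})^\beta$ by inspection (the $\mathbb{Z}/2\mathbb{Z}$ factors coming from equations of the form $w^2+w=0$). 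Remark 5.11 advocates exactly this case-by-case recipe.

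Your proposal takes a genuinely different route: promote $\varphi_\kappa$ to a morphism $\Phi$ of $\kappa$-group schemes, prove $\Phi$ faithfully flat with $\ker\Phi$ smooth having split unipotent identity component and constant $(\mathbb{Z}/2\mathbb{Z})^\beta$ component group, and then count $\kappa$-points via Lang's theorem. This is a more structural framework than the paper's and, if carried through, would give stronger information (an actual group scheme decomposition, as Remark 5.7(2) hints is possible); that is a real advantage. However, as written it has concrete gaps precisely where the work lies. The Cartan--Dieudonn\'e lifting argument is offered only for residue characteristic $\neq 2$, but there the entire construction collapses ($\alpha=0$, the situation of \cite{GY}) and the conjecture is not in doubt; the hard and relevant case is $p=2$, especially ramified. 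There you invoke Siegel--Eichler transvections without showing they lift compatibly across Jordan layers, and in characteristic $2$ the target groups $\mathrm{O}(\bar V_i,\bar h_i)$ (quadratic forms over a characteristic-$2$ field whose polar form is alternating) have a more delicate generation theory than you acknowledge. Likewise the claim that the congruence filtration makes $(\ker\Phi)^\circ$ split unipotent of dimension $l$ with the residual $(\mathbb{Z}/2\mathbb{Z})^\beta$ coming ``entirely from the bottom of the filtration'' is exactly the assertion that needs proof; in the paper's worked examples one sees $x^2+x=0$ constraints appear in specific blocks for specific lattices, and there is no uniform argument showing these are the only non-affine contributions. You end by conceding that the matter must be settled case by case --- which is precisely the paper's position, so your plan does not close the gap; it reformulates the conjecture in group-scheme language and defers the essential verification.
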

 Note that the dimension of $\tilde{G}$ is the same as the dimension of $G$ which is $n(n-1)/2$, where $G$ is the generic fiber of $\underline{G}$ and $n$ is the rank of $L$, since $\underline{G}$ is flat over $A$.
Once one verifies this conjecture, we obtain that
$$\# \tilde{G}(\kappa)=\prod_i\#(\mathrm{O}(\bar{V_i}, \bar{h_i})(\kappa))\cdot q^l\cdot 2^{\beta}.$$
Here, $q$ is the cardinality of $\kappa$.


It seems unlikely that the conjecture is solved in the general case.
In \cite{C1} and \cite{C2}, we proved this conjecture by writing down
formal matrix forms of an element of $\tilde{G}(\kappa)$ and the group homomorphism $\varphi_{\kappa}$ explicitly
based on a matrix form of an element of  $\underline{G}(A)$ for an unramified extension of $\mathbb{Z}_2$.
In the general case, it seems unlikely that such formal matrix forms can be written
because the  classification of hermitian (or quadratic) lattices over a ramified extension of $\mathbb{Z}_2$ is a highly involved problem.  (\cite{J}, \cite{O1}, \cite{O2}).

However, for a given lattice $L$, the construction of $\underline{G}$ studied in this paper
gives  an explicit matrix form of an element of $\underline{G}(A)$ which gives
explicit formal matrix forms of $\varphi_{\kappa}$ as well as an element of $\tilde{G}(\kappa)$, case by case.
In Remark 5.11, we explain a possible framework to prove the conjecture case by case, based on the proof of the conjecture on a specific case as given in
Theorem 5.4 (unimodular quadratic lattices with odd rank).
In order to show how the framework in Remark 5.11 works case by case,  we provide one another example (non-unimodular quadratic lattice) and prove the conjecture in this case in Example 5.12.

Note that this paper does not render the papers \cite{GY}, \cite{C1}, \cite{C2} obsolete since   we do not offer any new method to prove the conjecture,
which is one of main contributions in those papers.

Consequently, by assuming that the above conjecture can be proved case by case, this paper gives a new, simple and explicit recipe for computing the local density of a  given quadratic (or hermitian) lattice case by case,
especially, defined over an arbitrary ramified extension of $\mathbb{Z}_2$.
One strength of this recipe is that all computations for local densities of  given lattices are reduced to computations on  finitely generated $A$-modules,
limiting the language of schemes to  as little as possible.

This paper is organized as follows.
After fixing the notations in Section 2, we explain a uniform construction of a smooth integral model $\underline{G}$ in Section 3.
The constructions of $\underline{G}$ in \cite{GY}, \cite{C1}, \cite{C2} are special and simple cases (at most $\alpha=2$) of the construction studied in this paper,
which is explained in Section 4.1.
Then we give the local density formula in Section 4.2.
In order to illustrate how effective and useful the recipe computing local densities is,
we compute local densities of unimodular (i.e. $L^{\#}=L$) quadratic lattices with odd rank as an example in Section 5.
Finally, we explain a recipe for computing local densities by introducing Conjecture 5.10 and Remark 5.11
about the number of rational points of the special fiber of $\underline{G}$.


We note that our method also works for symplectic lattices, hermitian lattices, anti-hermitian lattices, quaternionic hermitian lattices,
and anti-quaternionic hermitian lattices over any non-Archimedean local field. 

\subsection{Acknowledgements}
The author had a motivation and an initial idea by hearing the discussion of Andrew Fiori and Gabriele Nebe, and by discussing with Brian Conrad
during the AIM workshop ``Algorithms for lattices and algebraic automorphic forms", May 2013.
The author would like to show deep appreciation to Wai Kiu Chan, Brian Conrad, Andrew Fiori,   Abhinav Kumar, Gabriele Nebe,
Rudolf Scharlau, Rainer Schulze-Pillot, and John Voight for valuable comments and discussions during the workshop.
The author also thanks the organizers of the workshop and the AIM for inviting him and for their hospitality.

Section 3, which is a main section of this paper, was inspired by a discussion with Radhika Ganapathy.

This paper was improved by discussing with Wee Teck Gan  during the author's visit to 2013 Pan Asian Number Theory (PANT) conference, Hanoi, July 2013, Vietnam.
He thanks the organizers of 2013 PANT conference and VIASM for inviting him and for their hospitality as well.

The author would like to express his deep gratitude to Juan Marcos Cervi\~no, Wai Kiu Chan, Brian Conrad,
Wee Teck Gan, Radhika Ganapathy, Benedict Gross, Manish Mishra, Gopal Prasad, Rudolf Scharlau, Rainer Schulze-Pillot, Olivier Ta\"ibi, Sandeep Varma, Jiu-Kang Yu,
and his wife Bogume Jang
for their encouragement and support on various aspects of the paper and the author's mathematical life.

\section{Notations}
This section is taken from \cite{GY}.
\subsubsection{} Let $F$ be a non-Archimedean local field 
with $A$ its ring of integers and $\kappa$ its residue field.
Let $p$ be the residue characteristic of $F$.
Choose a uniformizing element $\pi$ of $A$.
\subsubsection{} Let $(K, \sigma)$ be one of the following $F$-algebras with involution:
\begin{itemize}
\item $K=F$ with $\textit{char } F\neq 2$, $\sigma=$identity;
\item $K=E$, a separable quadratic extension, $\sigma=$the unique nontrivial automorphism of $E/F$;
\item $K=F\oplus F$, $\sigma(x,y)=(y,x)$;
\item $K=D$, the quaternion algebra over $F$, $\sigma=$the standard involution.
\end{itemize}
In the case of
$\textit{$K=M_2(F)$, the algebra of $(2 \times 2)$-matrices, $\sigma\begin{pmatrix} a&b\\ c&d \end{pmatrix}=\begin{pmatrix} d&-b\\ -c&a \end{pmatrix}$},$
 it is best handled by Morita context, as mentioned in the first paragraph of Section 4 of \cite{GY}, and is well explained in Section 8 of \cite{GY}.
Thus  we do not consider this case in our paper.

\subsubsection{} Let $B$ be a maximal $A$-order in $K$.
Then $B$ is uniquely determined. 
If $K=E$ is a ramified quadratic extension of $F$, or if $K=D$, we let $\pi_K$ be a uniformizer of $K$; 
in all other cases, we let $\pi_K=\pi$. 
\subsubsection{} Let $\epsilon$ be either $1$ or $-1$.
The triple $(K, \sigma, \epsilon)$ will be fixed throughout this paper, and by a hermitian form we always mean a $(\sigma, \epsilon)$-hermitian form.
If $(K, \epsilon, p)\neq (F, 1, 2)$, we consider a $B$-lattice $L$ (i.e., a free \textit{right} $B$-module of finite rank) of rank $n$
with a hermitian form $h : L \times L \rightarrow B.$
Our convention is
$$h(v\cdot a, w\cdot b)=\sigma(a)h(v,w) b, ~~~~~ h(w, v) = \epsilon \sigma (h(v, w)). $$
If $(K, \epsilon, p)= (F, 1, 2)$, we consider a $B (=A)$-lattice $L$ with a quadratic form $h : L \rightarrow B$,
and we say that
$h(v, w)=1/2\cdot (h(v+w)-h(v)-h(w))$.
We assume that $V=L\otimes_AF$ is nondegenerate with respect to $h$. 
The right $B$-module $L$ is also regarded as a left $B$-module by the rule $a\cdot v=v\cdot \sigma(a)$.

\begin{Def}
We define the dual lattice of $L$, denoted by $L^{\#}$, as
 $$L^{\#}=\{x \in L\otimes_A F : h(x, L) \subset B \}.$$
\end{Def}
The pair ($L, h$) is fixed throughout this paper.
\begin{Ass}
In this paper, we assume the following identity:
$$(L^{\#})^{\#}=L.$$
This assumption is true at least in the following cases:
\begin{itemize}
\item $K=F$, $K=E$, $K=F\oplus F$; 
\item $K=D$, the quaternion algebra over $F$, $\epsilon=1$.
\end{itemize}
The proof of the assumption in the above cases follows easily from the existence of a \textit{Jordan splitting} of a hermitian lattice $(L, h)$.
A \textit{Jordan splitting} of $(L, h)$ is  an orthogonal decomposition $L=\bigoplus_{i\geq 0}L_i$ such that $L^{\#}=\bigoplus_{i\geq 0}\pi_K^{-i}L_i$.
For a detailed explanation  of \textit{Jordan splitting}, see  \cite{O1} or \cite{O2} ($K=F$ with $\epsilon=1$),
\cite{J} and \cite{Sa} ($K=E$ with $\epsilon=1$), and
 \cite{GY} (all other cases except for $K=E$ with $\epsilon=-1$). 

If $K=E$ with $\epsilon=-1$, let $a\in F$ such that $E=F(\sqrt{a})$.
Then $(L, \sqrt{a}\cdot h)$ is a hermitian lattice with $\epsilon=1$ so that there is a Jordan splitting.
This gives a Jordan splitting for $(L, h)$, the indices being shifted according to the valuation of $1/\sqrt{a}$.
(Olivier Ta\"ibi provided the above proof to the author.)

For the following remaining case,
\begin{itemize}
\item $K=D$ with $\textit{char } F= 0$, the quaternion algebra over $F$,  $p=2$, $\epsilon=-1$,
\end{itemize}
we could not find a reference to mention whether or not the assumption is true.
If  there exists a \textit{Jordan splitting} of a hermitian lattice $(L, h)$  in the remaining  case 
so that the assumption is true, as we highly expect,
 then all contents of this paper hold for such case.
\end{Ass}



\subsubsection{} Let $G$ be the reductive algebraic group over $F$ such that
$$G(E)=\mathrm{Aut}_{K\otimes_FE}(V\otimes_FE, h\otimes_FE)$$
for any commutative $F$-algebra $E$.
Then $G$ is a classical group, not necessarily connected.
Indeed, it is essential that $G$ is smooth in our construction of the smooth integral model $\underline{G}$ due to Theorem 3.1 below.
Thus we exclude the case in Subsection 2.0.2 that $G$ is an orthogonal group over a field of characteristic 2, i.e.,
the case $K=F$ with $\textit{char } F= 2$ and $\sigma=$identity.

We denote by $\underline{\mathrm{GL}_K(V)}_{/F}$ the $F$-group scheme whose group of $E$-valued points is
$\mathrm{GL}_{K\otimes_FE}(V\otimes_FE)$
for any commutative $F$-algebra $E$.
If $K$ is commutative, this is just the Weil restriction $\mathrm{Res}_{K/F}\mathrm{GL}_K(V)$.

\section{Smooth integral model}

We start with the following theorem which  is crucially used in this paper.
\begin{Thm}(Proposition 3.7 in \cite{GY})
Assume that $A$ is a discrete valuation ring.
Let $\underline{G}^{\prime}$ be an affine group scheme over $A$ of finite type with smooth generic fiber $G$.
Then there exists a unique smooth affine group scheme $\underline{G}$ over $A$ with generic fiber $G$ such that
$$\underline{G}(R)=\underline{G}^{\prime}(R)\textit{     for any \'etale $A$-algebra $R$.}$$
\end{Thm}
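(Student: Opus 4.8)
The plan is to obtain $\underline{G}$ by applying Néron's smoothening process, in its group‑equivariant form, to $\underline{G}^{\prime}$, and to deduce uniqueness from a mapping‑property argument for affine schemes over a discrete valuation ring. First I would reduce to the flat case: replace $\underline{G}^{\prime}$ by the scheme‑theoretic closure $\underline{G}^{\prime}_{\mathrm{fl}}$ of its generic fibre $G$ inside $\underline{G}^{\prime}$. Since $A$ is a discrete valuation ring, $\underline{G}^{\prime}_{\mathrm{fl}}$ is an $A$‑flat affine group scheme of finite type with the same generic fibre $G$ (the closure of a subgroup scheme is a subgroup scheme, its group laws being forced by those on $G$), and for every $A$‑flat algebra $R$ — in particular every étale $R$ — one has $\underline{G}^{\prime}_{\mathrm{fl}}(R)=\underline{G}^{\prime}(R)$, because any $R$‑point of $\underline{G}^{\prime}$ with $R$ flat over $A$ factors through the closure of its restriction to the generic fibre. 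So we may assume $\underline{G}^{\prime}$ is flat.

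For existence I would build a finite tower $\underline{G}^{\prime}=\underline{G}_{0}\leftarrow\underline{G}_{1}\leftarrow\cdots\leftarrow\underline{G}_{m}=\underline{G}$ in which each $\underline{G}_{i+1}\to\underline{G}_{i}$ is the dilatation (Néron blow‑up) of $\underline{G}_{i}$ along a suitable closed subgroup scheme of its special fibre. The dilatation of an affine group scheme along a closed subgroup scheme of the special fibre is again an affine group scheme of finite type, and the structure map is an isomorphism over $F$; hence every $\underline{G}_{i}$ is an affine $A$‑group scheme with generic fibre $G$. Choosing the centres of the dilatations as in the group smoothening procedure, so that the measure of non‑smoothness of the special fibre strictly decreases, Néron's theorem guarantees that the tower terminates with $\underline{G}=\underline{G}_{m}$ smooth over $A$; it is still affine, with generic fibre $G$, and a group scheme.

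Two further points remain. The étale points are preserved: for $R$ flat over $A$ the dilatation gives $\underline{G}_{i+1}(R)=\{g\in\underline{G}_{i}(R)\ :\ g_{s}\ \text{lies in the centre of the dilatation}\}$, so $\underline{G}(R)\hookrightarrow\underline{G}^{\prime}(R)$; for the reverse inclusion one reduces, by faithfully flat base change to $\widehat{A}$ and by splitting an étale $A$‑algebra into its ``finite'' and ``generic'' parts, to the case where $R$ is the ring of integers of a finite unramified extension of $F$, and there $\underline{G}(R)=\underline{G}^{\prime}(R)$ is exactly the defining property of the smoothening (smoothness of $\underline{G}$ lets one lift every residue‑field point of its special fibre to an $R$‑point, and the dilatation centres were chosen so as not to lose those points). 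For uniqueness I would establish the lemma: if $X$ is smooth affine over $A$ and $Y$ is affine over $A$, then an $F$‑morphism $u\colon X_{F}\to Y_{F}$ extends uniquely to an $A$‑morphism $X\to Y$ as soon as $u$ carries $X(A^{\mathrm{sh}})$ into $Y(A^{\mathrm{sh}})$, with $A^{\mathrm{sh}}$ a strict henselization. Realizing $Y$ as a closed subscheme of some $\mathbb{A}^{m}_{A}$ reduces to $Y=\mathbb{A}^{m}_{A}$, where a coordinate of $u$ is an element $f\in\Gamma(X,\mathcal{O}_{X})[1/\pi]$; if $f\notin\Gamma(X,\mathcal{O}_{X})$ one picks a closed point $x_{0}$ of the reduced $\kappa$‑scheme $X_{s}$ with separable residue field (such points are dense) at which the leading $\pi$‑adic coefficient of $f$ is nonzero, lifts $x_{0}$ to a point of $X(A^{\mathrm{sh}})$ by smoothness and henselianness, and finds that $f$ takes a value of negative valuation there, contradicting the hypothesis. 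Applying the lemma to two candidates $\underline{G}$ and $\underline{H}$ (smooth affine $A$‑group schemes with generic fibre $G$ and $\underline{G}(R)=\underline{G}^{\prime}(R)=\underline{H}(R)$ for all étale $R$, whence $\underline{G}(A^{\mathrm{sh}})=\underline{H}(A^{\mathrm{sh}})$ since these commute with the filtered colimit of étale $A$‑algebras defining $A^{\mathrm{sh}}$), the identity of $G$ extends to morphisms $\underline{G}\to\underline{H}$ and $\underline{H}\to\underline{G}$; the composites are the identity on the schematically dense generic fibre, hence the identity, and the isomorphism is one of group schemes because it is a homomorphism on $G$.

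The main obstacle is the existence step: carrying out the smoothening compatibly with the group law and, above all, proving that the process terminates — this is Néron's key technical input, controlling how dilatation along the singular locus improves the special fibre, upgraded by the group‑smoothening refinement so that each $\underline{G}_{i}$ stays a group scheme. The reduction to the flat case, the tracking of étale points, and the uniqueness argument are comparatively formal. (In this paper the statement is simply quoted as Proposition 3.7 of \cite{GY}.)
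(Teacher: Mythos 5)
The paper does not prove this statement itself: it is quoted verbatim as Proposition~3.7 of \cite{GY}, which in turn rests on N\'eron's group-smoothening theorem (\cite{BLR}, Chapter~7). Your reconstruction — flatten by scheme-theoretic closure, run the group-smoothening tower of dilatations until termination, verify preservation of $A^{\mathrm{sh}}$ (hence \'etale) points, and deduce uniqueness from the extension property for morphisms out of a smooth affine $A$-scheme — is exactly that argument, so there is nothing in the paper to contrast it against.
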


 Let $\underline{G}^{\prime}$ be a naive integral model of $G$ such that
for any commutative $A$-algebra $R$,
$\underline{G}^{\prime}(R)=\mathrm{Aut}_{B\otimes_AR}(L\otimes_AR, h\otimes_AR)$.
Then, the above theorem tells the existence of a unique smooth integral model
 $\underline{G}$  of $G$
such that
$\underline{G}(R)=\underline{G}^{\prime}(R) \textit{     for any \'etale $A$-algebra $R$}$.
In this section, we give an explicit construction of the smooth integral model $\underline{G}$.

\subsection{Constructions of $T^0$ and $H^0$ based on \cite{GY}}
We first define a functor $T^0$ from the category of commutative flat $A$-algebras to the category of rings.
For any commutative flat $A$-algebra $R$, set
$$T^0(R) = \{X \in \mathrm{End}_{B\otimes_AR}(L \otimes_A R) : X(L^{\#}\otimes_AR) \subset L^{\#}\otimes_AR\}.$$
Clearly, the set $T^0(R)$ is closed under addition and multiplication, so $T^0(R)$ has the structure of a ring.
The functor $T^0$ is representable by a flat $A$-algebra which is a polynomial ring over $A$ of $n^2\cdot[K:F]$ variables,
and this is explained in Section 5.2 of \cite{GY}.
Therefore, $T^0$ has the structure of a scheme of rings.

For the future use, we state the following proposition.
\begin{Prop}
For a flat $A$-algebra $R$, choose $X, Y \in T^0(R)$.
We define the adjoint $X^{ad}$ of $X$ characterized as
$h(X(v), w)=h(v, X^{ad}(w))$, where $v, w \in L \otimes_A R$.
Then $X^{ad}, ~~~ X^{ad}\cdot Y\in T^0(R)$ and $(X^{ad})^{ad}=X$.
\end{Prop}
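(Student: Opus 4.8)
The plan is to verify each claim directly from the defining property $h(X(v),w)=h(v,X^{ad}(w))$, treating the existence and uniqueness of $X^{ad}$ first, then the three assertions in turn. First I would establish that $X^{ad}$ is well-defined as an element of $\mathrm{End}_{B\otimes_A R}(L\otimes_A R)$. Since $V=L\otimes_A F$ is nondegenerate, the form $h$ identifies $V$ (and after base change, $L\otimes_A R$ tensored up to $F$) with a submodule of its dual, so for each $w$ the functional $v\mapsto h(X(v),w)$ is represented by a unique element $X^{ad}(w)$ of $V\otimes_F(\,\cdot\,)$; linearity and additivity in $w$ are immediate, and the $B$-semilinearity conventions $h(v\cdot a,w\cdot b)=\sigma(a)h(v,w)b$ force $X^{ad}$ to be $B\otimes_A R$-linear (one checks $X^{ad}(w\cdot b)=X^{ad}(w)\cdot b$ by pairing against an arbitrary $v$ and using that $\sigma$ is an involution). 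The relation $(X^{ad})^{ad}=X$ is then formal: pairing gives $h(v,(X^{ad})^{ad}(w))=h(X^{ad}(v),w)=\epsilon\,\sigma(h(w,X^{ad}(v)))=\epsilon\,\sigma(h(X(w),v))=h(v,X(w))$ for all $v$, and nondegeneracy finishes it.

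Next I would show $X^{ad}$ actually lands in $T^0(R)$, i.e. that it preserves $L^{\#}\otimes_A R$ as well as $L\otimes_A R$. This is where the Assumption $(L^{\#})^{\#}=L$ enters crucially. For $x\in L^{\#}\otimes_A R$, I want $h(X^{ad}(x),L\otimes_A R)\subset B\otimes_A R$; but $h(X^{ad}(x),v)=\epsilon\,\sigma(h(v,X^{ad}(x)))=\epsilon\,\sigma(h(X(v),x))$, and since $X(v)\in L\otimes_A R$ and $x\in L^{\#}\otimes_A R$, the value $h(X(v),x)$ lies in $B\otimes_A R$, hence so does its image under $\epsilon\sigma$. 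The same argument with the roles of $L$ and $L^{\#}$ interchanged — using $(L^{\#})^{\#}=L$ to characterize membership in $L\otimes_A R$ by integrality of pairings against $L^{\#}\otimes_A R$ — shows $X^{ad}(L\otimes_A R)\subset L\otimes_A R$; alternatively one notes $X\in T^0(R)$ already gives $X(L^{\#}\otimes_A R)\subset L^{\#}\otimes_A R$, and the pairing computation $h(X^{ad}(\ell),x)=\epsilon\sigma(h(X(x),\ell))\in B\otimes_A R$ for $\ell\in L\otimes_A R$, $x\in L^{\#}\otimes_A R$ combined with $(L^{\#})^{\#}=L$ puts $X^{ad}(\ell)$ back in $L\otimes_A R$. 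Finally, $X^{ad}\cdot Y\in T^0(R)$ because $T^0(R)$ is a ring, closed under multiplication, and we have just shown $X^{ad}\in T^0(R)$, $Y\in T^0(R)$.

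The main obstacle I anticipate is the careful bookkeeping of the semilinearity conventions: because $L$ is a right $B$-module with $h$ sesquilinear in the prescribed way, and is simultaneously viewed as a left $B$-module via $a\cdot v=v\cdot\sigma(a)$, one must be scrupulous that $X^{ad}$ is two-sided $B\otimes_A R$-linear and that $\sigma$ (and $\epsilon$) are inserted on the correct side at each step — in the quaternionic and $F\oplus F$ cases $K$ is noncommutative or not a domain, so one cannot be cavalier. There is also a minor subtlety in that $T^0$ is only defined on flat $A$-algebras $R$, so one must work with $L\otimes_A R\hookrightarrow V\otimes_F R'$ where $R'$ is the localization inverting $\pi$ (flatness of $R$ guarantees this injection), in order to make sense of $X^{ad}(w)$ before checking it is integral; but this is exactly the setup already used in Section 5.2 of \cite{GY} to represent $T^0$, so no new idea is needed. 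Everything else is a routine pairing manipulation.
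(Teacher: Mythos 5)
Your proof is correct and takes the same approach as the paper's (one-line) argument: the key point in both is that $(L^{\#})^{\#}=L$ together with $X(L^{\#}\otimes_A R)\subset L^{\#}\otimes_A R$ forces $X^{ad}$ to preserve $L\otimes_A R$, while preservation of $L^{\#}\otimes_A R$ follows already from $X(L\otimes_A R)\subset L\otimes_A R$ and the definition of the dual lattice. You have simply filled in the routine pairing manipulations, the well-definedness of $X^{ad}$ via nondegeneracy over $F$ and flatness of $R$, and the observation that $X^{ad}\cdot Y\in T^0(R)$ because $T^0(R)$ is a ring — all of which the paper leaves implicit.
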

\begin{proof}
It is clear that $X^{ad}$ stabilizes $L^{\#}$ as well as $L$, by using  Assumption 2.2, $(L^{\#})^{\#}=L$.
\end{proof}

We  define another functor $H^0$ from the category of commutative flat $A$-algebras to the category of abelian groups.
For any commutative flat $A$-algebra $R$, set
 $$H^0(R) = \{\textit{$f$ : $f$ is an hermitian form on $L\otimes_AR$ such that $f(L\otimes_AR, L^{\#}\otimes_AR)\subset B\otimes_AR$}\}.$$
The functor $H^0$ is also representable by a flat $A$-algebra which is a polynomial ring over $A$ of $n^2\cdot[K:F]-\mathrm{dim~}G$ variables,
and this is explained in Section 5.4 of \cite{GY}.
Note that the fixed hermitian form $h$ is an element of $H^0(A)$.

\subsection{Construction of three maps $\varphi_{0, R}$, $\psi_{0, R}$, $\overline{\varphi_{0, R}}$ }
  \subsubsection{}  We define a map $\varphi_{0, R}$ from $T^0(R)$ to $H^0(R)$ for a flat $A$-algebra $R$ as follows:
  $$ \varphi_{0, R} : T^0(R) \longrightarrow H^0(R), ~~~~~ X\mapsto h\circ X.$$
  Here, $h\circ X$ is a hermitian form on $L\otimes_AR$ such that $h\circ X(v, w)=h(X(v), X(w))$ for $v, w \in L\otimes_AR$.
  Then this map is represented by a morphism of schemes, denoted by $\varphi_0$.


  \subsubsection{}
   Let $R$ be a commutative $A$-algebra.
   Since $T^0$ is representable by a polynomial, the fiber of the identity along with the morphism $T^0(R[\varepsilon]/\varepsilon^2) \rightarrow T^0(R)$
   is naturally identified with $T^0(R)$.
   Similarly, the fiber of the fixed hermitian form $h$ along with the morphism $H^0(R[\varepsilon]/\varepsilon^2) \rightarrow H^0(R)$
   is  identified with $H^0(R)$.
   Then the morphism $\varphi_0$ induces another map $\psi_{0, R}$ from  $T^0(R)$ to $H^0(R)$ as follows:
   $$\psi_{0, R} : T^0(R) \longrightarrow H^0(R), \textit{     } \psi_{0, R} (X)(v, w) = h(v, X(w))+h(X(v), w)).$$
    Note that both $T^0(R)$ and $H^0(R)$ have $R$-module structures and  $\psi_{0, R}$ is an $R$-module homomorphism.
    Like as $\varphi_0$, the map $\psi_{0, R}$ is also represented by a morphism of schemes, denoted by $\psi_0$.
  Indeed, $\psi_0$ is the differential of $\varphi_0$.

\subsubsection{}    We consider the following quotient map
$$\overline{\varphi_{0, R}} : T^0(R) \longrightarrow H^0(R)\longrightarrow H^0(R)/\mathrm{Im~}\psi_{0, R}, \textit{   for a flat $A$-algebra $R$}$$
induced from $\varphi_{0, R}$.
Notice that $H^0(R)/\mathrm{Im~}\psi_{0, R}$ is an abelian group since it is an $R$-module. 
The map $\overline{\varphi_{0, R}}$ is then  a group homomorphism since
$\varphi_{0, R}(X+Y)=\varphi_{0, R}(X)+\varphi_{0, R}(Y)+\psi_{0, R}(X^{ad}\cdot Y)$
for  $X, Y \in T^0(R)$.
Here, $X^{ad}\cdot Y \in T^0(R)$ by Proposition 3.3.
Note that $\overline{\varphi_{0, R}}$ is not an $R$-module homomorphism.

\subsection{Construction of $T^1$}
We define the functor $T^1$ from the category of commutative flat $A$-algebras to the category of abelian groups as follows:
\[
T^1(R)=\left\{
  \begin{array}{l l}
 \{X\in \mathrm{Ker~}\overline{\varphi_{0, A}} : X^{ad} \in \mathrm{Ker~}\overline{\varphi_{0, A}}\} & \quad \textit{if $R=A$};\\
 R\otimes_AT^1(A) & \quad \textit{if $R$ is a flat $A$-algebra}.
    \end{array} \right.
\]
Note that the set $\{X\in \mathrm{Ker~}\overline{\varphi_{0, A}} : X^{ad} \in \mathrm{Ker~}\overline{\varphi_{0, A}}\}$ is an $A$-module
so $T^1(A)$ is well-defined and therefore the functor $T^1$ is well-defined.
We can also easily see that $T^1(R)$ is an $R$-submodule of $T^0(R)$ for a flat $A$-algebra $R$.
The functor $T^1$ has the following property for an  \'etale $A$-algebra $R$ which is stated in Theorem 3.6.
Since our purpose is the construction of the smooth integral model $\underline{G}$
and it is characterized in terms of   \'etale $A$-algebras as mentioned in Theorem 3.1,
it is necessary to have  explicit descriptions of our functors defined in this section in terms of   \'etale $A$-algebras.

\begin{Thm}
Let $R$ be an  \'etale $A$-algebra.
Then it is easy to see that
$R\otimes_AT^1(A) \subseteq \{X\in \mathrm{Ker~}\overline{\varphi_{0, R}} : X^{ad} \in \mathrm{Ker~}\overline{\varphi_{0, R}}\}$
as $R$-submodules of $T^0(R)$.
Furthermore, these two submodules are same.
\end{Thm}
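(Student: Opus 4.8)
The plan is to prove the nontrivial inclusion by reducing the equality of the two $R$-submodules of $T^0(R)$ to a faithfully flat descent statement, exploiting that $T^0$, $H^0$, $\varphi_0$, $\psi_0$ are all representable by schemes over $A$ and that $\ker\overline{\varphi_{0,R}}$ and the adjoint operation are compatible with base change along flat $A$-algebras. First I would record the easy inclusion $R\otimes_A T^1(A)\subseteq\{X\in\ker\overline{\varphi_{0,R}} : X^{ad}\in\ker\overline{\varphi_{0,R}}\}$: if $X\in T^1(A)$ then $\varphi_{0,A}(X)\in\operatorname{Im}\psi_{0,A}$, say $\varphi_{0,A}(X)=\psi_{0,A}(Y)$, and applying $-\otimes_A R$ (which is exact since $R$ is flat) gives $\varphi_{0,R}(X\otimes 1)=\psi_{0,R}(Y\otimes 1)\in\operatorname{Im}\psi_{0,R}$; the same argument applies to $X^{ad}$ since adjunction commutes with flat base change. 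This extends $R$-linearly to all of $R\otimes_A T^1(A)$ because $\overline{\varphi_{0,R}}$ is a group homomorphism (Section 3.2.3) and the adjoint is additive.

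For the reverse inclusion, the key structural input is that, since $R$ is \'etale over $A$, $R$ is flat and the exact sequence of $A$-modules defining $T^1(A)$ stays exact after $\otimes_A R$. Concretely, $\ker\overline{\varphi_{0,A}}$ sits in the exact sequence obtained from $T^0(A)\xrightarrow{\overline{\varphi_{0,A}}} H^0(A)/\operatorname{Im}\psi_{0,A}$, and tensoring with the flat algebra $R$ identifies $R\otimes_A\ker\overline{\varphi_{0,A}}$ with $\ker\overline{\varphi_{0,R}}$ once one checks that $R\otimes_A(H^0(A)/\operatorname{Im}\psi_{0,A})\cong H^0(R)/\operatorname{Im}\psi_{0,R}$; the latter is exactly the statement that cokernels commute with flat base change, together with the base-change formulas $H^0(R)=R\otimes_A H^0(A)$ and $\operatorname{Im}\psi_{0,R}=R\otimes_A\operatorname{Im}\psi_{0,A}$ coming from representability by polynomial rings. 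Then I would intersect: the defining condition ``$X^{ad}\in\ker\overline{\varphi_{0,R}}$'' cuts out an $R$-submodule which, by the same flatness bookkeeping applied to the $A$-linear map $X\mapsto X^{ad}$ on $\ker\overline{\varphi_{0,A}}$, is $R\otimes_A T^1(A)$.

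The main obstacle I anticipate is the interchange of the cokernel $H^0(-)/\operatorname{Im}\psi_{0,-}$ with base change to $R$, i.e.\ justifying $\operatorname{Im}\psi_{0,R}=R\otimes_A\operatorname{Im}\psi_{0,A}$ inside $H^0(R)=R\otimes_A H^0(A)$. This is not automatic for the image of an arbitrary module map, but here it follows because $\psi_{0,A}\colon T^0(A)\to H^0(A)$ is a map of finite free $A$-modules (both functors are represented by polynomial rings over $A$, so their $A$-points are finite free), and for a flat $A$-algebra $R$ the image of $\psi_{0,A}\otimes_A R$ is $R\otimes_A\operatorname{Im}\psi_{0,A}$ precisely when $R$ is flat. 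One should be slightly careful that $\psi_{0,R}$ is literally $\psi_{0,A}\otimes_A R$ as a map of $R$-modules, which is the content of ``$\psi_{0,R}$ is an $R$-module homomorphism'' plus functoriality of the representing polynomial rings; granting this, the cokernel computation is a one-line consequence of right-exactness of $-\otimes_A R$ and exactness from flatness. Once these base-change identifications are in place, the two $R$-submodules of $T^0(R)$ in the statement coincide by construction, completing the proof.
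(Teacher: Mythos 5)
The central step in your reverse inclusion --- identifying $\mathrm{Ker}\,\overline{\varphi_{0,R}}$ with $R\otimes_A\mathrm{Ker}\,\overline{\varphi_{0,A}}$ by ``flat base change of an exact sequence of $A$-modules'' --- does not hold, because $\overline{\varphi_{0,A}}$ is not an $A$-module homomorphism. The map $\varphi_{0,A}$ is quadratic ($\varphi_{0,A}(aX)=a^2\varphi_{0,A}(X)$), so its composition with the quotient to $H^0(A)/\mathrm{Im}\,\psi_{0,A}$ is only additive, not $A$-linear; the paper says this explicitly at the end of Section 3.4.3. There is therefore no exact sequence of $A$-modules through $\overline{\varphi_{0,A}}$, and $\overline{\varphi_{0,R}}$ is not $\overline{\varphi_{0,A}}\otimes_A R$ in any sense. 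Kernels of additive-but-quadratic maps do not commute with flat base change: the preimage of a submodule under a quadratic map can genuinely grow after a flat extension (e.g.\ preimages of $2\mathbb{Z}$ under $x\mapsto x^2$ behave differently over $\mathbb{Z}$ and over $\mathbb{Z}[i]$). Your bookkeeping with $\mathrm{Im}\,\psi_{0,R}=R\otimes_A\mathrm{Im}\,\psi_{0,A}$ and the cokernel is fine, since $\psi_0$ really is linear, but that is not where the difficulty sits.

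A structural symptom of the gap: your argument uses only that $R$ is flat, never that it is \'etale. If it were correct, the kernel description of $T^1(R)$ would agree with $R\otimes_A T^1(A)$ for every flat $R$, and the paper's careful restriction of the kernel description to \'etale $R$ (and its separate definition of $T^1$ on flat algebras by tensoring) would be pointless. The paper's actual proof uses \'etaleness essentially: it reduces to an \'etale local $R$, which is finite over the henselian $A$ and hence a Galois covering of $\mathrm{Spec}\,A$; shows the set $\{X\in\mathrm{Ker}\,\overline{\varphi_{0,R}}:X^{ad}\in\mathrm{Ker}\,\overline{\varphi_{0,R}}\}$ is stable under $\mathrm{Aut}_A(R)$ because $\varphi_0$, $\psi_0$, and $(\cdot)^{ad}$ are defined over $A$; applies Galois descent to write this set as $R\otimes_A\mathcal{M}$ for an $A$-submodule $\mathcal{M}\subseteq T^0(A)$; and then checks $\mathcal{M}\subseteq T^1(A)$ using $\mathrm{Im}\,\psi_{0,R}\cap H^0(A)=\mathrm{Im}\,\psi_{0,A}$. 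That descent input is the missing ingredient in your proposal.
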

\begin{proof}
Let $R$ be    an \'etale local ring over $A$.
Note that such $R$ becomes finite over $A$ since
  any \'etale local ring $R$ over a henselian local ring is finite   by Proposition 4 of Section 2.3 in \cite{BLR} and $A$ is henselian by the assumption made in Subsection 2.0.1.
Then a  morphism $\textit{Spec R}\rightarrow \textit{Spec A}$ is a Galois covering (cf. Section 6.2, Example B in \cite{BLR}).
We remark that $T^0(R)=R\otimes_AT^0(A)$ and $H^0(R)=R\otimes_AH^0(A)$.
It is, then, easy to show that the set $\{X\in \mathrm{Ker~}\overline{\varphi_{0, R}} : X^{ad} \in \mathrm{Ker~}\overline{\varphi_{0, R}}\}$
is stabilized by an element of $\mathrm{Aut}_A(R)$.
Therefore, the above set descents to an $A$-submodule $\mathcal{M}$ of $T^0(A)$ by \textit{Galois Descent}.
Choose $X\in \mathcal{M}$.
Since $\varphi_{0, R}(X), \varphi_{0, R}(X^{ad})\in \mathrm{Im~}\psi_{0, R} \cap H^0(A)$,
 we can see that $\varphi_{0, A}(X), \varphi_{0, A}(X^{ad})\in \mathrm{Im~}(\psi_{0, A})$.
Thus $X\in T^1(A)$ so $\mathcal{M} \subseteq T^1(A)$.
This completes the proof. 
\end{proof}

We now claim that the functor $T^1$ is represented by a unique polynomial ring. More precisely,

\begin{Lem}
The functor $T^1$ is representable by a flat $A$-algebra which is a polynomial ring over $A$ of $n^2\cdot[K:F]$ variables.
\end{Lem}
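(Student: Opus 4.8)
The plan is to show that $T^1(A)$ is a free $A$-module of rank $n^2\cdot[K:F]$ and is a direct summand of $T^0(A)$, which (together with the definition $T^1(R)=R\otimes_A T^1(A)$) will yield representability by a polynomial ring in $n^2\cdot[K:F]$ variables. Since $T^0(A)$ is itself free of rank $n^2\cdot[K:F]$ by the discussion in Section 3.1, it suffices to prove that $T^1(A)$ has full rank $n^2\cdot[K:F]$ inside $T^0(A)$ and that the quotient $T^0(A)/T^1(A)$ is torsion-free (equivalently $A$-flat, equivalently $\pi$-torsion-free). Flatness of $T^1$ as a functor is then automatic from $T^1(R)=R\otimes_A T^1(A)$ with $T^1(A)$ flat, and the polynomial ring representing it is the symmetric algebra on the $A$-dual of $T^1(A)$.

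First I would analyze the two defining conditions. The map $\overline{\varphi_{0,A}}\colon T^0(A)\to H^0(A)/\mathrm{Im}\,\psi_{0,A}$ is a group homomorphism (Section 3.2), so $\mathrm{Ker}\,\overline{\varphi_{0,A}}=\varphi_{0,A}^{-1}(\mathrm{Im}\,\psi_{0,A})$ is an $A$-submodule of $T^0(A)$: indeed for $X,Y$ in the kernel and $a\in A$, the identity $\varphi_{0,A}(aX)=a^2\varphi_{0,A}(X)$ combined with $a^2\varphi_{0,A}(X)\in\mathrm{Im}\,\psi_{0,A}$ (an $A$-module) shows closure under scaling, and additivity follows from the cocycle identity $\varphi_{0,R}(X+Y)=\varphi_{0,R}(X)+\varphi_{0,R}(Y)+\psi_{0,R}(X^{ad}\cdot Y)$. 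Then $T^1(A)$ is the intersection of this submodule with its image under the involution $X\mapsto X^{ad}$, which is an $A$-module automorphism of $T^0(A)$ by Proposition 3.3; hence $T^1(A)$ is an $A$-submodule of the finite free module $T^0(A)$, so it is automatically finitely generated, torsion-free, and free.

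The substantive point is the rank and the torsion-freeness of the quotient. For the rank, I would tensor with $F$: over $F$, $T^0(F)=\mathrm{End}_{K\otimes_F F}(V)$, and $\mathrm{Im}\,\psi_{0,F}$ is all of $H^0(F)$ because $\psi_{0,F}$ is the differential of $\varphi_{0,F}$ at the identity and $G$ is smooth of dimension $\dim G = n^2[K:F]-\dim H^0$ over $F$ (so the differential of $\varphi_0$ is surjective at the identity, hence everywhere by homogeneity/translation). Therefore $\mathrm{Ker}\,\overline{\varphi_{0,F}}=T^0(F)$ and likewise the adjoint condition is vacuous over $F$, giving $T^1(F)=T^0(F)$ of dimension $n^2[K:F]$; thus $T^1(A)$ has the asserted full rank. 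For torsion-freeness of $T^0(A)/T^1(A)$: suppose $X\in T^0(A)$ with $\pi X\in T^1(A)$. Then $\pi X, \pi X^{ad}\in\mathrm{Ker}\,\overline{\varphi_{0,A}}$, so $\varphi_{0,A}(\pi X)=\pi^2\varphi_{0,A}(X)\in\mathrm{Im}\,\psi_{0,A}$; I must deduce $\varphi_{0,A}(X)\in\mathrm{Im}\,\psi_{0,A}$. This is exactly where I expect the main obstacle, and the escape is to note that the submodule $\mathrm{Im}\,\psi_{0,A}\subseteq H^0(A)$ has the property that $H^0(A)/\mathrm{Im}\,\psi_{0,A}$ is itself \emph{torsion-free}: this is how the smooth integral model is engineered in \cite{GY} (Section 5.4), so $\pi^2\varphi_{0,A}(X)\in\mathrm{Im}\,\psi_{0,A}$ forces $\varphi_{0,A}(X)\in\mathrm{Im}\,\psi_{0,A}$, and the same for $X^{ad}$, whence $X\in T^1(A)$. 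This proves $T^0(A)/T^1(A)$ is $\pi$-torsion-free, hence $A$-flat, so $T^1(A)$ is a direct summand and the functor $T^1=-\otimes_A T^1(A)$ is represented by the symmetric algebra on $\mathrm{Hom}_A(T^1(A),A)$, a polynomial ring over $A$ in $\mathrm{rank}_A T^1(A)=n^2[K:F]$ variables, completing the proof.
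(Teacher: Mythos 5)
The core of your argument---establish that $T^1(A)$ is an $A$-submodule of the finite free $A$-module $T^0(A)$ (hence automatically free over the DVR $A$), then compute its rank by tensoring with $F$ and invoking surjectivity of $\psi_{0,F}$ to get $T^1(F)=T^0(F)$---is correct and is exactly the paper's proof. But you have grafted onto it an unnecessary and, in fact, false extra claim. You write that it suffices to show $T^1(A)$ has full rank \emph{and} that $T^0(A)/T^1(A)$ is $\pi$-torsion-free, so that $T^1(A)$ is a direct summand of $T^0(A)$. The direct-summand property is not needed: once $T^1(A)$ is free of rank $d=n^2[K:F]$, the functor $R\mapsto T^1(R)=R\otimes_A T^1(A)\cong R^d$ on flat $A$-algebras is represented by $A[x_1,\dots,x_d]$, full stop. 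Nothing about the embedding into $T^0(A)$ being split enters.

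Worse, the torsion-freeness claim is false precisely in the cases this paper is built to handle. Since $T^1(F)=T^0(F)$, the quotient $T^0(A)/T^1(A)$ is a finitely generated \emph{torsion} $A$-module; were it also torsion-free it would vanish, i.e., $T^1(A)=T^0(A)$ and $\alpha=0$. The paper's proof of Theorem 3.12 explicitly treats $M=T^0(A)/T^1(A)$ as a torsion module and introduces the integer $l$ with $\pi^l M=0$. Likewise, your intermediate assertion that $H^0(A)/\mathrm{Im}\,\psi_{0,A}$ is torsion-free cannot hold in general: since $\psi_{0,F}$ is surjective, this quotient is torsion, so torsion-freeness would force it to vanish and make $\overline{\varphi_{0,A}}=0$ identically, again collapsing to $\alpha=0$. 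Section 5.4 of \cite{GY}, which you cite, concerns representability of $H^0$ itself, not saturation of $\mathrm{Im}\,\psi_{0,A}$ inside $H^0(A)$. So the last paragraph of your proposal should simply be deleted; what precedes it already proves the lemma, and matches the paper.
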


\begin{proof}
Let $R$ be a flat $A$-algebra.
Since $T^1(R)=R\otimes_AT^1(A)$ is a finitely generated free $R$-module,
 $T^1$ is representable by a polynomial ring over $A$.
Thus the relative dimension of $T^1$ over $\textit{Spec A}$ is the same as the dimension of the generic fiber of $T^1$ over $\textit{Spec F}$,
which is the same as the dimension of $T^1(F)$ as an $F$-vector space.
We claim that $T^1(F)~~(=F\otimes_AT^1(A))~~=T^0(F)$.

It is easy to show that $F\otimes_AT^1(A)=\{X\in \mathrm{Ker~}\overline{\varphi_{0, F}} : X^{ad} \in \mathrm{Ker~}\overline{\varphi_{0, F}}\}$.
Since $\psi_{0, F}$ is surjective,
The latter is the same as $T^0(F)$, whose dimension as an $F$-vector space is $n^2\cdot [K:F]$.
\end{proof}

Note that there is a natural morphism from $T^1$ to $T^0$ mapping $X$ to $X$ where $X\in T^1(R)$ with a flat $A$-algebra $R$.
This morphism is represented by a morphism of schemes and we denote it by  $\iota_0$.
We note that $\iota_0$ gives a subfunctor on flat $A$-algebras, but not immersion as schemes.
For example, if $R$ is a torsion $A$-algebra, then $\iota_{0, R}$ is no longer injective.
Let $\varphi_1$ (resp. $\psi_1$) be the morphism from $T^1$ to $H^0$ induced by $\varphi_0$ (resp. $\psi_0$) composed with $\iota_0$
 as the following commutative diagram of morphisms of schemes:
\begin{diagram}
T^1\\
\dTo^{\iota_0}  &\rdTo^{\varphi_1, \psi_1}\\
T^0 &\rTo^{\varphi_0, \psi_0} & H^0
\end{diagram}

\subsection{Construction of the functor $T^{m+1}$ on the category of \'etale $A$-algebras}
We define the functor $T^{m+1}$, for all $m \geq 0$, from the category of \'etale $A$-algebras to the category of abelian groups as follows:
\[
T^{m+1}(R)=
 \{X\in \mathrm{Ker~}\overline{\varphi_{m, R}} : X^{ad} \in \mathrm{Ker~}\overline{\varphi_{m, R}}\}
 \]
with the following morphisms:
\[
\left\{
  \begin{array}{l }
  \iota_{m-1, R}: T^m(R)\longrightarrow T^{m-1}(R), ~~~~~ X \mapsto X;\\
 \varphi_{m, R}, \psi_{m, R} : T^m(R) \longrightarrow H^0(R), ~~~~~ \varphi_{m, R}=\varphi_{m-1, R}\circ\iota_{m-1, R},
 ~~\psi_{m, R}=\psi_{m-1, R}\circ\iota_{m-1, R};\\
 \overline{\varphi_{m, R}} : T^m(R) \longrightarrow H^0(R)\longrightarrow H^0(R)/\mathrm{Im~}\psi_{m, R}.
    \end{array} \right.
\]
We will show that the above functor is well-defined in the following theorem.
For convenience, let $T^{-m}=T^0$ and let $\varphi_{-m, R}=\varphi_{0, R}$ and $\psi_{-m, R}=\psi_{0, R}$ for all $m\geq 0$.


\begin{Thm}
Let $R$ be an \'etale $A$-algebra.
Then 
 $T^{m+1}(R)$ with morphisms $\iota_{m-1, R}, \varphi_{m, R}, \psi_{m, R}$, for all $m\geq 0$, is well-defined and is an additive group.
In particular, $X^{ad}\cdot Y\in T^m(R)$ with $X, Y\in T^m(R)$ and both $\psi_{m, R}$ and $\overline{\varphi_{m, R}}$   are  group homomorphisms.
\end{Thm}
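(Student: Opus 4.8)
I would prove the statement by induction on $m$. The case $m=0$ is already contained in the constructions of $T^0$, $H^0$ and of the maps $\varphi_{0,R},\psi_{0,R},\overline{\varphi_{0,R}}$ carried out above, together with Proposition 3.3 (which supplies $X^{ad},\,X^{ad}\cdot Y\in T^0(R)$ and $(X^{ad})^{ad}=X$) and Theorem 3.6 (which, on an \'etale $A$-algebra $R$, identifies the set $\{X\in\mathrm{Ker~}\overline{\varphi_{0,R}}:X^{ad}\in\mathrm{Ker~}\overline{\varphi_{0,R}}\}$ with $T^1(R)$). As the inductive hypothesis at stage $m$ I would carry the following slightly strengthened package, for every \'etale $A$-algebra $R$: $T^m(R)$ is an $R$-submodule of $T^0(R)$ which is stable under $X\mapsto X^{ad}$ and under $(X,Y)\mapsto X^{ad}\cdot Y$; $\psi_{m,R}$ is $R$-linear, so $\mathrm{Im~}\psi_{m,R}$ is an $R$-submodule of $H^0(R)$ and $H^0(R)/\mathrm{Im~}\psi_{m,R}$ is in particular an abelian group; $\overline{\varphi_{m,R}}$ is a homomorphism of abelian groups; and $T^{m+1}(R)$, defined by the displayed formula, is an $R$-submodule of $T^m(R)$ stable under adjoint. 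Carrying the $R$-module refinement is not needed for the statement, but it keeps the bookkeeping clean — it makes the target of $\overline{\varphi_{m,R}}$ automatically a group, for instance.

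In the inductive step the order of the arguments matters. First, $\psi_{m+1,R}=\psi_{m,R}\circ\iota_{m,R}$, where $\iota_{m,R}:T^{m+1}(R)\hookrightarrow T^m(R)$ is the submodule inclusion provided by the hypothesis; hence $\psi_{m+1,R}$ is $R$-linear and $\mathrm{Im~}\psi_{m+1,R}$ is an $R$-submodule of $H^0(R)$, so $\overline{\varphi_{m+1,R}}$ is at least a well-defined map into the abelian group $H^0(R)/\mathrm{Im~}\psi_{m+1,R}$. The core of the step is the stability of $T^{m+1}(R)$ under $(X,Y)\mapsto X^{ad}\cdot Y$. For this I would use the two elementary identities $\varphi_{0,R}(PQ)=\varphi_{0,R}(P)\circ Q$ and $\psi_{0,R}(Z)\circ Q=\psi_{0,R}(Q^{ad}ZQ)$, where $(\,\cdot\,)\circ Q$ denotes the pullback of a hermitian form along $Q$; both follow at once from the defining property $h(Xv,w)=h(v,X^{ad}w)$ and from $(Z^{ad})^{ad}=Z$ (Proposition 3.3). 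Given $X,Y\in T^{m+1}(R)$, the condition $X^{ad}\in\mathrm{Ker~}\overline{\varphi_{m,R}}$ lets us write $\varphi_{m,R}(X^{ad})=\psi_{m,R}(Z)=\psi_{0,R}(Z)$ for some $Z\in T^m(R)$; then $\varphi_{m,R}(X^{ad}Y)=\varphi_{0,R}(X^{ad})\circ Y=\psi_{0,R}(Y^{ad}ZY)$, and $Y^{ad}ZY\in T^m(R)$ by two applications of the inductive closure of $T^m(R)$, so $\varphi_{m,R}(X^{ad}Y)\in\mathrm{Im~}\psi_{m,R}$, i.e.\ $X^{ad}Y\in\mathrm{Ker~}\overline{\varphi_{m,R}}$. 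Running the same computation with $Y^{ad}\in\mathrm{Ker~}\overline{\varphi_{m,R}}$ in place of $X^{ad}$ gives $(X^{ad}Y)^{ad}=Y^{ad}X\in\mathrm{Ker~}\overline{\varphi_{m,R}}$, and therefore $X^{ad}Y\in T^{m+1}(R)$. Stability of $T^{m+1}(R)$ under adjoint is then formal, from $(X^{ad})^{ad}=X$.

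Granting the closure, the homomorphism property of $\overline{\varphi_{m+1,R}}$ is immediate from the polarization identity $\varphi_{m+1,R}(X+Y)=\varphi_{m+1,R}(X)+\varphi_{m+1,R}(Y)+\psi_{0,R}(X^{ad}\cdot Y)$, the restriction to $T^{m+1}(R)$ of the identity used in the construction of $\overline{\varphi_{0,R}}$: since $X^{ad}\cdot Y\in T^{m+1}(R)$, the last term lies in $\mathrm{Im~}\psi_{m+1,R}$, so $\overline{\varphi_{m+1,R}}$ is additive. It is not $R$-linear, because $\varphi_{0,R}$ is homogeneous of degree two in its argument. Consequently $T^{m+2}(R)=\{X\in\mathrm{Ker~}\overline{\varphi_{m+1,R}}:X^{ad}\in\mathrm{Ker~}\overline{\varphi_{m+1,R}}\}$ is the intersection of the $R$-submodule $\mathrm{Ker~}\overline{\varphi_{m+1,R}}$ (it is a submodule because $\varphi_{m+1,R}(aX)=a^2\varphi_{m+1,R}(X)$) with its preimage under the additive involution $X\mapsto X^{ad}$, hence an $R$-submodule of $T^{m+1}(R)$ that is stable under adjoint. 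Finally, all the defining conditions, as well as the inclusion $T^{m+1}\hookrightarrow T^m$, are compatible with base change along morphisms of \'etale $A$-algebras because $\varphi_0$ and $\psi_0$ are morphisms of schemes; so $T^{m+1}$ is indeed a functor — a subfunctor of $T^m$ — equipped with $\iota_{m,R},\varphi_{m+1,R},\psi_{m+1,R}$, and the induction closes.

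The main obstacle, I expect, is the closure step in the second paragraph: isolating the normal form $\psi_{0,R}(Z)\circ Q=\psi_{0,R}(Q^{ad}ZQ)$ and, more importantly, realizing that the stability of $T^{m+1}(R)$ under $X^{ad}\cdot Y$ must be proved \emph{before} one can assert that $\overline{\varphi_{m+1,R}}$ is a homomorphism, since the latter is deduced from the former through the polarization identity. The remaining points — functoriality, the $R$-module refinements, and the formal consequences of $(X^{ad})^{ad}=X$ — are routine.
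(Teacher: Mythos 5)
Your proposal is correct and follows essentially the same route as the paper: induction on $m$, with the key step being closure of $T^m(R)$ under $(X,Y)\mapsto X^{ad}\cdot Y$ via the identity $\varphi(X^{ad}Y)=\psi(Y^{ad}ZY)$ where $\varphi(X^{ad})=\psi(Z)$, followed by the polarization identity to get that $\overline{\varphi_{m,R}}$ is additive. Your explicit isolation of the two pullback identities, the $R$-module bookkeeping, and the remark on base change are helpful expansions of what the paper leaves implicit, but the underlying argument is the same.
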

\begin{proof}
We prove this by induction.
When $m=0$, the theorem follows from Proposition 3.3 and Section 3.4.3.  

Suppose that the theorem is true for all $n\leq m-1$, where $m \geq 1$, so that
$$T^{m}(R)=
 \{X\in \mathrm{Ker~}\overline{\varphi_{m-1, R}} : X^{ad} \in \mathrm{Ker~}\overline{\varphi_{m-1, R}}\}$$
 is well-defined.
Let $X, Y\in T^{m}(R)$ and choose $Z\in T^{m-1}(R)$ such that $\varphi_{m-1, R}(X^{ad})=\psi_{m-1, R}(Z)$.
 To show that $X^{ad}\cdot Y\in T^m(R)$,
we consider the following identity:
\begin{align*}
\begin{split}
\varphi_{m-1, R}(X^{ad}\cdot Y)=
\psi_{m-1, R}(Y^{ad}\cdot Z\cdot Y).
\end{split}
\end{align*}
Since $Z\cdot Y$ and $Y^{ad}\cdot Z\cdot Y \in T^{m-1}(R)$ by hypothesis of induction,
we  conclude that
 $\overline{\varphi_{m-1, R}}(X^{ad}\cdot Y)=0$.
Similarly, $\overline{\varphi_{m-1, R}}(Y^{ad}\cdot X)=0$  so $X^{ad}\cdot Y\in T^m(R)$.

To show that $\overline{\varphi_{m, R}}$   is a group homomorphism,
choose $X, Y \in T^m(R)$.
Let us observe the following identity:
  \begin{align*}
  \varphi_{m, R}(X+Y) 
  =\varphi_{m, R}(X)+\varphi_{m, R}(Y)+\psi_{m, R}(X^{ad}\cdot Y).
  \end{align*}
  Since
 $X^{ad}\cdot Y\in T^m(R)$, $\overline{\varphi_{m, R}}$   is a group homomorphism.
 This completes the proof.
 \end{proof}
 \begin{Cor}
 Let $R$ be an \'etale $A$-algebra.
Let  $X\in T^{m+1}(R)$ and let $Y\in T^{m}(R)$.
Then $X\cdot Y\in \mathrm{Ker~}\overline{\varphi_{m, R}}$.
 \end{Cor}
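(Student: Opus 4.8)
The plan is to reduce to the explicit bilinear identities for $\varphi$ and $\psi$ at level $0$, exactly as in the proof of Theorem 3.7, and then to check that the intermediate endomorphisms produced lie in $T^m(R)$, so that the identities may be read inside $H^0(R)/\mathrm{Im~}\psi_{m,R}$.

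First I would unwind the hypothesis on $X$. Since $T^{m+1}(R)=\{W\in\mathrm{Ker~}\overline{\varphi_{m,R}}:W^{ad}\in\mathrm{Ker~}\overline{\varphi_{m,R}}\}$, in particular $X\in\mathrm{Ker~}\overline{\varphi_{m,R}}$, so there is some $Z\in T^m(R)$ with $\varphi_{m,R}(X)=\psi_{m,R}(Z)$. Because $R$ is \'etale over $A$, hence flat, the maps $\iota_{\bullet,R}$ are injective and $\varphi_{m,R},\psi_{m,R}$ are simply $\varphi_{0,R},\psi_{0,R}$ evaluated on the underlying endomorphisms; thus the above is the identity of hermitian forms
$$h(Xv',Xw')=h(v',Zw')+h(Zv',w')\qquad\textit{ for all }v',w'\in L\otimes_AR.$$

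Next I substitute $v'=Yv$ and $w'=Yw$ and move $Y$ across each slot using the adjoint together with $(Y^{ad})^{ad}=Y$ from Proposition 3.3, which turns the right-hand side into $h(v,Y^{ad}ZYw)+h(Y^{ad}ZYv,w)$; that is,
$$h\circ(XY)=\psi_{0,R}(Y^{ad}ZY)$$
as elements of $H^0(R)$. It then suffices to show that $XY$ and $Y^{ad}ZY$ both lie in $T^m(R)$: for then $h\circ(XY)=\varphi_{m,R}(XY)$ and $\psi_{0,R}(Y^{ad}ZY)=\psi_{m,R}(Y^{ad}ZY)$, so the identity reads $\varphi_{m,R}(XY)=\psi_{m,R}(Y^{ad}ZY)\in\mathrm{Im~}\psi_{m,R}$, which is precisely $XY\in\mathrm{Ker~}\overline{\varphi_{m,R}}$. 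For this I would use two facts: each $T^m(R)$ is stable under $W\mapsto W^{ad}$ (for $m\geq 1$ this is immediate from the symmetric form of the definition together with $(W^{ad})^{ad}=W$, and for $m=0$ it is Proposition 3.3); and the final clause of Theorem 3.7, which gives $U^{ad}\cdot V\in T^m(R)$ whenever $U,V\in T^m(R)$. Applying the latter with $(U,V)=(X^{ad},Y)$ — legitimate since $X^{ad}\in T^{m+1}(R)\subseteq T^m(R)$ — yields $XY\in T^m(R)$; applying it with $(U,V)=(Z^{ad},Y)$ and then with $(U,V)=(Y,ZY)$ yields $Y^{ad}ZY\in T^m(R)$.

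The substitution itself is a one-line manipulation, so the part requiring genuine care is this last bookkeeping step. The mild subtlety is that $\mathrm{Ker~}\overline{\varphi_{m,R}}$ is not itself stable under the adjoint — this is exactly why $T^{m+1}$ carries the extra condition on $X^{ad}$ — so one cannot juggle products inside the kernels directly but must route every product through the adjoint-stable modules $T^m(R)$, invoking Theorem 3.7 at each multiplication. Once that is in place, the corollary follows at once.
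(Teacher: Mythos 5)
Your proof is correct and follows essentially the same line as the paper's: pick $Z\in T^m(R)$ with $\varphi_{m,R}(X)=\psi_{m,R}(Z)$, substitute $v\mapsto Yv$, $w\mapsto Yw$ to obtain $\varphi_{m,R}(XY)=\psi_{m,R}(Y^{ad}ZY)$, and invoke Theorem~3.9 to place $Y^{ad}ZY$ in $T^m(R)$. The one place where you go beyond the paper is your explicit check that $XY$ itself lies in $T^m(R)$ (via $X^{ad}\in T^{m+1}(R)\subseteq T^m(R)$ and $(X^{ad})^{ad}\cdot Y=X\cdot Y$), which is actually needed for the conclusion $X\cdot Y\in\mathrm{Ker~}\overline{\varphi_{m,R}}\subseteq T^m(R)$ to be well posed; the paper leaves this implicit. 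Likewise your two-step routing of $Y^{ad}ZY$ through $T^m(R)$ (first $ZY$ via $(Z^{ad},Y)$, then $Y^{ad}(ZY)$ via $(Y,ZY)$, using adjoint-stability of $T^m(R)$) spells out what the paper compresses into ``by the above theorem.'' So this is the same argument, just with the bookkeeping the paper elides made explicit.
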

 \begin{proof}
 Let $\varphi_{m, R}(X)=\psi_{m, R}(Z)$ for some $Z\in T^m(R)$.
  Then $\varphi_{m, R}(X\cdot Y)=\psi_{m, R}(Y^{ad}\cdot Z\cdot Y)$ and $Y^{ad}\cdot Z\cdot Y\in T^m(R)$ by the above theorem.
 This completes the proof.
 \end{proof}

\subsection{Construction of the functors $T^{m+1}$ and $\widetilde{T}$ on the category of  flat $A$-algebras}
We extend the functor $T^{m+1}$ from the category of commutative flat $A$-algebras to the category of abelian groups as follows:
\[
T^{m+1}(R)=\left\{
  \begin{array}{l l}
 \{X\in \mathrm{Ker~}\overline{\varphi_{m, A}} : X^{ad} \in \mathrm{Ker~}\overline{\varphi_{m, A}}\} & \quad \textit{if $R=A$};\\
 R\otimes_AT^{m+1}(A) & \quad \textit{if $R$ is a flat $A$-algebra}.
    \end{array} \right.
\]

We can also show that  the set $\{X\in \mathrm{Ker~}\overline{\varphi_{m, A}} : X^{ad} \in \mathrm{Ker~}\overline{\varphi_{m, A}}\}$ is an $A$-module so
   $T^{m+1}(A)$ is well-defined and therefore $T^{m+1}$ is well-defined.
   In addition, $T^{m+1}(R)$ is an $A$-submodule of $T^{m}(R)$ for a flat $A$-algebra $R$.
   Furthermore, for an  \'etale $A$-algebra $R$, we can show that
   $$ R\otimes_AT^{m+1}(A) \left( = T^{m+1}(R) \right)=\{X\in \mathrm{Ker~}\overline{\varphi_{m, R}} : X^{ad} \in \mathrm{Ker~}\overline{\varphi_{m, R}}\},$$
   as submodules of $T^{m}(R)$,
   and its proof is similar to that of Theorem 3.6 with induction on $m$ so we skip it.

Define $\iota_{m-1, R} : T^m(R)\longrightarrow T^{m-1}(R), X \mapsto X$, for a flat $A$-algebra $R$.
Then we can easily show that $T^{m+1}$ is representable by a flat $A$-algebra which is a polynomial ring over $A$ of $n^2\cdot[K:F]$ variables
and that three morphisms $\iota_{m-1, R}, \varphi_{m, R}=\varphi_{m-1, R}\circ \iota_{m-1, R}$, and $\psi_{m, R}=\psi_{m-1, R}\circ \iota_{m-1, R}$,
 for a flat $A$-algebra $R$,
are represented by morphisms of schemes, denoted by $\iota_{m-1}, \varphi_{m}, \psi_{m}$ respectively.
The proof of these is similar to  that of Lemma 3.7 with induction on $m$ so we skip it.
Note that $T^m(F)=T^0(F)$ for all $m$.

Let us  observe the following sequence:
 $$T^0(R)\supseteq T^1(R)\supseteq \cdots \supseteq T^m(R)\supseteq  \cdots$$ for a flat $A$-algebra $R$.
 We emphasize that all ranks of $T^i(R)$, as finitely generated free $R$-modules, are same and it is $n^2\cdot[K:F]$.
Indeed, there is a suitable integer $\eta$ which makes the above sequence stabilize and this is proved in the following theorem.
\begin{Thm}
There exists an integer $\eta ~(\geq 0)$ such that $T^m=T^\eta$ for all $m \geq \eta$.
\end{Thm}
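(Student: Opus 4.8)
The plan is to work entirely over the discrete valuation ring $A$ and track the lengths of the torsion quotients $T^m(A)/T^{m+1}(A)$. First I would observe that, since each $T^m$ is representable by a polynomial ring over $A$ in $n^2[K:F]$ variables (Lemma 3.7 and its generalization in Section 3.6), each $T^m(A)$ is a free $A$-module of rank $n^2[K:F]$, and the inclusion $T^{m+1}(A)\subseteq T^m(A)$ is an inclusion of free modules of the same rank. Moreover, tensoring with $F$ gives $T^m(F)=T^0(F)$ for all $m$ (noted just before the statement), so the quotient $Q_m:=T^m(A)/T^{m+1}(A)$ is a finitely generated torsion $A$-module, hence of finite length $\ell_m:=\mathrm{length}_A(Q_m)<\infty$. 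The sequence stabilizes as soon as some $\ell_m=0$, because $T^{m+1}(A)=T^m(A)$ forces $\overline{\varphi_{m+1,A}}=\overline{\varphi_{m,A}}$ (the defining data at stage $m+1$ coincides with that at stage $m$), and then by the recursive definition $T^{m+2}(A)=T^{m+1}(A)$, etc.; the same then propagates to all flat $A$-algebras by the formula $T^{m+1}(R)=R\otimes_A T^{m+1}(A)$.

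So the task reduces to showing that $\ell_m=0$ for some $m$. The natural approach is a monotonicity argument: I would try to prove that the chain of invariant factors is nested, i.e. that for each $m$ there is a surjection $Q_{m-1}\twoheadrightarrow Q_m$, or at least that $\ell_m\le \ell_{m-1}$ with equality only when both vanish. Concretely, one has the inclusions $T^{m+1}(A)\subseteq T^m(A)\subseteq T^{m-1}(A)$, and one wants to compare $T^m(A)/T^{m+1}(A)$ with $T^{m-1}(A)/T^m(A)$. The key structural input is that $T^{m+1}(A)=\{X\in\mathrm{Ker}\,\overline{\varphi_{m,A}} : X^{ad}\in\mathrm{Ker}\,\overline{\varphi_{m,A}}\}$ while $\overline{\varphi_{m,A}}$ is the quotient map $T^m(A)\to H^0(A)/\mathrm{Im}\,\psi_{m,A}$, and $\psi_{m,A}=\psi_{m-1,A}\circ\iota_{m-1,A}$ so that $\mathrm{Im}\,\psi_{m,A}\subseteq\mathrm{Im}\,\psi_{m-1,A}$. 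This last containment is what should force the successive quotients to be "no larger"; I would make it precise by choosing a uniformizer $\pi$ and showing that $\pi\cdot T^m(A)\subseteq T^{m+1}(A)$ fails to grow indefinitely, or dually, that the $\pi$-adic valuations of the invariant factors of $T^{m+1}(A)\hookrightarrow T^m(A)$ are componentwise bounded by those of $T^m(A)\hookrightarrow T^{m-1}(A)$.

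An alternative and possibly cleaner route, which I would pursue in parallel, is to bound $\ell_0=\mathrm{length}_A(T^0(A)/T^1(A))$ directly and then show $\ell_m$ is strictly decreasing until it hits $0$: if one can establish $\pi^{c}T^0(A)\subseteq T^1(A)$ for some explicit $c$ (which follows because $T^0(A)/T^1(A)$ is a fixed finite-length module), then since $X\in T^1(A)$ with $X^{ad}\in T^1(A)$ and the whole construction is compatible with multiplication by $\pi$ and with the adjoint, the same $c$ works at every stage, giving $\pi^c T^m(A)\subseteq T^{m+1}(A)$ for all $m$; combined with monotonicity $\ell_{m}\le\ell_{m-1}$ this forces stabilization in at most $\ell_0\cdot c$ steps. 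The main obstacle I anticipate is proving the monotonicity $\ell_m\le\ell_{m-1}$ rigorously: it requires a careful bookkeeping of how $\mathrm{Im}\,\psi_{m,A}$ shrinks as $m$ grows and how that interacts with the adjoint-stability condition, and one must be attentive to the fact (flagged in the paper) that $\iota_{m-1,R}$ is not injective on torsion algebras, so all comparisons must be done on the free modules $T^m(A)$ over the DVR $A$ itself, not on special fibers. Once the length argument is in place, the conclusion that $T^m=T^\eta$ as \emph{schemes} (not merely on $A$-points) follows from representability by polynomial rings and the identity $T^{m+1}(R)=R\otimes_A T^{m+1}(A)$.
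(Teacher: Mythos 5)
There is a genuine gap. You correctly set up the problem: each $T^m(A)$ is free of rank $n^2[K:F]$, the quotients $Q_m=T^m(A)/T^{m+1}(A)$ have finite length $\ell_m$, and stabilization is equivalent to $\ell_m=0$ for some $m$. But neither of your two proposed routes is actually carried out, and, more importantly, even the facts you hope to establish would not suffice. The monotonicity claim $\ell_m\le\ell_{m-1}$ is never proven (you flag this yourself), and monotonicity alone cannot force $\ell_m=0$: a constant nonzero sequence is monotone. Your sharpened version (``equality only when both vanish'') would suffice but is also unproven, and it is not clear how to extract it from the containment $\mathrm{Im}\,\psi_{m,A}\subseteq\mathrm{Im}\,\psi_{m-1,A}$. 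Your alternative route has a similar flaw: the step bound $\pi^cT^m(A)\subseteq T^{m+1}(A)$ for all $m$, even combined with $\ell_m\le\ell_{m-1}$, does \emph{not} force stabilization in $\ell_0\cdot c$ steps or at all --- for instance a chain with $T^{m+1}=\pi T^m$ satisfies both a step bound with $c=1$ and constant $\ell_m$, yet never stabilizes. What you need, and what the paper supplies, is a \emph{uniform} lower bound: a single sublattice $T'\subseteq T^m$ for every $m$. The paper takes $l$ minimal with $\pi^l\cdot(T^0(A)/T^1(A))=0$ and sets $T'=\pi^{2l}T^0$; the factor $2l$ (not $l$) exploits the quadratic homogeneity of $\varphi_0$ (namely $\varphi_0(\lambda X)=\lambda^2\varphi_0(X)$): if $\pi^lX\in T^1$ so $\varphi_{0}(\pi^lX)=\psi_0(Y)$, then $\varphi_0(\pi^{2l}X)=\pi^{2l}\psi_0(Y)=\psi_0(\pi^{2l}Y)$ with $\pi^{2l}Y\in T'$, and since $T'$ is $\mathrm{ad}$-stable an easy induction shows $T'\subseteq T^m$ for all $m$. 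Then $T^0(A)\supseteq T^1(A)\supseteq\cdots\supseteq T'(A)$ with $T^0(A)/T'(A)$ of finite length, so the chain stabilizes. This uniform lower bound is the idea your proposal is missing; the length bookkeeping you set up is compatible with it but cannot replace it.
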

\begin{proof}
Let $M=T^0(A)/T^1(A)$.
 It  is then clear that $M$ is a torsion $A$-module since the rank of $T^0(A)\otimes_AF$ is the same as that of $T^1(A)\otimes_AF$.
  Let $l$ be the smallest non-negative integer such that $\pi^l\cdot M=0$.
Let $$T'(R)=\{\pi^{2l}\cdot X : X\in T^0(R)\}$$ for a flat $A$-algebra $R$. 
Then $T'$ is representable by  a flat $A$-algebra which is a polynomial ring over $A$.
Let $\varphi'$ and $\psi'$ be morphisms from $T'$ to $H^0$ induced from $\varphi_0$ and $\psi_0$, respectively.
We choose an element $\pi^{2l}\cdot X\in T'(R)$ for an \'etale $A$-algebra $R$.
 Since $\pi^l\cdot X\in T^1(R)$,
 $\varphi_{0, R}(\pi^l\cdot X)=\psi_{0, R}(Y)$
 for a certain $Y\in T^0(R)$.
Then, $$\varphi'_R(\pi^{2l}\cdot X)=\pi^{2l}\cdot \psi_{0, R}(Y)=\psi'_R(\pi^{2l}\cdot Y)$$ with $\pi^{2l}\cdot Y\in T'(R)$.
Therefore, $T'(R)\subset T^m(R)$ for every integer $m$, where $R$ is an \'etale $A$-algebra.

On the other hand, if $T^l=T^{l+1}$ for certain non-negative integer $l$, then it is clear that $T^l=T^{l'}$, for all $l'\geq l$.   
Therefore, the above sequence stabilizes and this completes the proof.
\end{proof}
Let $\alpha$ be the smallest non-negative integer satisfying that  $T^m=T^{\alpha}$ for all $m \geq \alpha$.
We finally define the functor $$\widetilde{T}:=T^{\alpha},$$ equipped with two morphisms $\varphi_{\alpha}$ and $\psi_{\alpha}$.
We note that Theorem 3.12 and its proof only assert the stability of the above sequence given before Theorem 3.12. 
 In order to find $\alpha$,
  one can construct $T^{m}(A)$ explicitly starting from $m=0$ and $\alpha$ is the first integer $m$ such that $T^{m}(A)=T^{m+1}(A)$
  (equivalently, the map $\overline{\varphi_{m, A}} : T^m(A) \rightarrow H^0(A)/\mathrm{Im~}\psi_{m, A}$ is zero).
Indeed, we expect that $\alpha$ is at most $e'+1$ where the ramification index $e=2e'$ or $e=2e'-1$, as this is true for unramified case in Section 4.1
and for two examples covered in Section 5.1 and Example 5.12.


\subsection{Construction of $\widetilde{H}$}

Recall that  $\psi_{\alpha, R} : \widetilde{T}(R) \longrightarrow H^0(R)$ is $R$-linear for a flat $A$-algebra $R$.
Define the functor $\widetilde{H}$ from the category of commutative flat $A$-algebras to the category of abelian groups as follows:
$$\widetilde{H}(R)=\mathrm{Im~}\psi_{\alpha, R}.$$
\begin{Thm}
The functor $\widetilde{H}$  is representable by a flat $A$-algebra which is a polynomial ring over $A$ of $n^2\cdot[K:F]-\mathrm{dim~}G$ variables
and the map $\psi_{\alpha, R} : \widetilde{T}(R) \rightarrow \widetilde{H}(R)$ is represented by a morphism of schemes, denoted by $\widetilde{\psi}$.

The map $\widetilde{\psi_{R_{\kappa}}} : \widetilde{T}(R_{\kappa}) \rightarrow \widetilde{H}(R_{\kappa})$ is surjective
for any $\kappa$-algebra $R_{\kappa}$.
\end{Thm}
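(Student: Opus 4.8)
The plan is to establish the two assertions of Theorem 3.14 separately, using the $\kappa$-algebra case of the last sentence as the crux, since the representability and the morphism of schemes over flat $A$-algebras follow the now-familiar pattern. First I would treat representability: since $\widetilde{T}=T^{\alpha}$ is representable by a polynomial ring over $A$ of $n^2[K:F]$ variables and $\psi_{\alpha,R}$ is $R$-linear for flat $A$-algebras, the image $\mathrm{Im}\,\psi_{\alpha,R}=R\otimes_A\mathrm{Im}\,\psi_{\alpha,A}$ for such $R$; because $H^0(A)$ is $A$-free and $\mathrm{Im}\,\psi_{\alpha,A}$ is an $A$-submodule whose rank over $F$ equals $\dim H^0(F)=\dim T^0(F)-\dim\mathrm{Ker}\,\psi_{0,F}$, and $\psi_{0,F}$ is surjective with kernel of dimension $\dim G$ (its kernel being the Lie algebra-type object), the rank is $n^2[K:F]-\dim G$. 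One must check $\mathrm{Im}\,\psi_{\alpha,A}$ is a free (not merely finitely generated) $A$-module, which holds since $A$ is a DVR and the module is torsion-free as a submodule of the free module $H^0(A)$. Hence $\widetilde{H}$ is representable by a polynomial ring over $A$ in $n^2[K:F]-\dim G$ variables, and $\widetilde{\psi}$ is the induced morphism of affine schemes; the universal property of polynomial rings makes it a morphism.

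The substantive claim is surjectivity of $\widetilde{\psi_{R_\kappa}}:\widetilde{T}(R_\kappa)\to\widetilde{H}(R_\kappa)$ for every $\kappa$-algebra $R_\kappa$. The idea is: because $\widetilde{H}$ is a polynomial ring over $A$ and $\widetilde{\psi}$ over $A$ is surjective on flat $A$-algebras (indeed on $A$ itself, by definition $\widetilde{H}(A)=\mathrm{Im}\,\psi_{\alpha,A}$), it suffices to show $\widetilde{\psi}$ is surjective as a morphism of $A$-schemes, i.e. that $\widetilde{\psi}\otimes_A\kappa$ is surjective on the special fiber. Concretely, choose a basis of the free $A$-module $\mathrm{Im}\,\psi_{\alpha,A}$; lifting each basis element to an element of $\widetilde{T}(A)$ realizes $\widetilde{\psi}$, in suitable coordinates, as an $A$-linear-in-the-first-argument map whose matrix (with respect to $A$-bases of $\widetilde{T}$ and $\widetilde{H}$) has a maximal minor that is a unit in $A$ — this unit survives reduction mod $\pi$, so $\widetilde{\psi}\otimes_A\kappa$ is a surjection of $\kappa$-vector-space schemes, hence split, hence surjective on $R_\kappa$-points for every $\kappa$-algebra $R_\kappa$. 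The key point enabling this is that $\psi_{\alpha,R}$ is $R$-\emph{linear}, so after base change to $\kappa$ it is a linear map of free $\kappa$-modules, and a surjection of finite free modules over a ring is a split surjection.

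The main obstacle is verifying that the reduction $\widetilde{\psi}\otimes_A\kappa$ is still \emph{surjective} onto $\widetilde{H}\otimes_A\kappa=\mathrm{Im}\,\psi_{\alpha,A}\otimes_A\kappa$ — a priori, reducing mod $\pi$ could shrink the image, so that $\mathrm{Im}(\psi_{\alpha,\kappa})\subsetneq(\mathrm{Im}\,\psi_{\alpha,A})\otimes_A\kappa$. This is precisely where the \emph{stability} $T^{\alpha}=T^{\alpha+1}$ is used: I would argue that if the cokernel of $\psi_{\alpha,A}:\widetilde{T}(A)\to H^0(A)$ had $\pi$-torsion in a way that made the reduced map non-surjective onto the reduced image, one could produce, via the defining kernel condition $X,X^{ad}\in\mathrm{Ker}\,\overline{\varphi_{\alpha,A}}$, an enlargement of $T^{\alpha+1}$ beyond $T^{\alpha}$ — contradicting the choice of $\alpha$. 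Put differently, stability forces $\mathrm{Im}\,\psi_{\alpha,A}$ to be a direct summand of $H^0(A)$ (equivalently, $H^0(A)/\mathrm{Im}\,\psi_{\alpha,A}$ is $A$-free), so that formation of the image commutes with the base change $A\to\kappa$, and likewise with $A\to R_\kappa$ through $\kappa$. Granting that summand property, the surjectivity over arbitrary $R_\kappa$ is immediate from right-exactness of $-\otimes_A R_\kappa$ applied to the split surjection $\widetilde{T}(A)\twoheadrightarrow\mathrm{Im}\,\psi_{\alpha,A}$. I expect the written proof to isolate this summand/stability step as a short lemma and then let the rest follow formally.
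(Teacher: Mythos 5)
The representability half of your argument is correct and is essentially the paper's: $\widetilde{H}(R)=R\otimes_A\widetilde{H}(A)$ for flat $R$, $\widetilde{H}(A)$ is a finitely generated torsion-free (hence free) $A$-module, and the rank computation over $F$ gives $n^2[K:F]-\dim G$. So far so good.

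For the surjectivity, however, you have talked yourself into a worry that is not there and then ``resolved'' it with a step that is actually false. You write that stability $T^\alpha=T^{\alpha+1}$ forces $\widetilde{H}(A)=\mathrm{Im}\,\psi_{\alpha,A}$ to be a direct summand of $H^0(A)$. Since $\widetilde{H}(A)$ and $H^0(A)$ both have rank $n^2[K:F]-\dim G$, a full-rank direct summand of a free $A$-module over a DVR is the whole module, so your claim would force $\widetilde{H}(A)=H^0(A)$. This only happens when $\alpha=0$ (the $p\neq2$ case); in the $p=2$ examples of Section~5, $H^0(A)/\widetilde{H}(A)$ is a nonzero torsion module, so $\widetilde{H}(A)$ is emphatically not a direct summand. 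Stability controls $\overline{\varphi_{\alpha,A}}$, not the shape of $H^0(A)/\mathrm{Im}\,\psi_{\alpha,A}$.

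The worry you were trying to address is also spurious. You conflate $\widetilde{\psi_{R_\kappa}}:\widetilde{T}(R_\kappa)\to\widetilde{H}(R_\kappa)$ with the map $\psi_{\alpha,R_\kappa}$ into $H^0(R_\kappa)$. The ``image could shrink'' phenomenon is real for the latter (since $\widetilde{H}(A)\hookrightarrow H^0(A)$ need not stay injective after $\otimes_A\kappa$), but the theorem asks about the former, where $\widetilde{H}(R_\kappa)$ is defined by base change of the representing polynomial ring, i.e.\ $\widetilde{H}(R_\kappa)=R_\kappa\otimes_A\widetilde{H}(A)$, not as a submodule of $H^0(R_\kappa)$. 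Once this is clear, surjectivity is one line: $\psi_{\alpha,A}:\widetilde{T}(A)\twoheadrightarrow\widetilde{H}(A)$ is surjective by definition, and $-\otimes_A R_\kappa$ is right exact — precisely your own last sentence, which needs no summand hypothesis at all. The paper instead lifts $R_\kappa$ to a flat $A$-algebra $R$ and chases the commutative square $\widetilde{T}(R)\to\widetilde{H}(R)$, $\widetilde{T}(R_\kappa)\to\widetilde{H}(R_\kappa)$, using surjectivity of the vertical reduction maps (affine spaces, or smoothness plus Hensel) and surjectivity of $\widetilde{\psi_R}$ by construction. Both routes are fine; drop the summand/stability digression, which is both incorrect and unnecessary.
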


\begin{proof}
 Since $\widetilde{T}(R)=R\otimes_A\widetilde{T}(A)$ and $\psi_{\alpha, R}$ is $R$-linear for a flat $A$-algebra $R$,
we have that $\widetilde{H}(R)=R\otimes_A\widetilde{H}(A)$.
Therefore, $\widetilde{H}$ is representable by a polynomial ring over $A$.
Since $\widetilde{H}(F)=H^0(F)$, the dimension of $\widetilde{H}(F)$ is $n^2\cdot [K:F]-\mathrm{dim~}G$
and this is the relative dimension of $\widetilde{H}$ over $\textit{Spec A}$.
The representability of $\psi_{\alpha, R}$ is obvious.

To show that $\widetilde{\psi_{R_{\kappa}}} : \widetilde{T}(R_{\kappa}) \rightarrow \widetilde{H}(R_{\kappa})$ is surjective for a $\kappa$-algebra
$R_{\kappa}$, we choose a flat $A$-algebra $R$ such that $R\otimes_A\kappa=R_{\kappa}$.
Then we have the following commutative diagram:
\begin{diagram}
\widetilde{T}(R)   &\rTo^{\widetilde{\psi_{R}}=\psi_{\alpha, R}} &\widetilde{H}(R)\\
\dTo_{} & &\dTo_{}\\
\widetilde{T}(R_{\kappa}) &\rTo^{\widetilde{\psi_{R_{\kappa}}}} &\widetilde{H}(R_{\kappa})
\end{diagram}

Since $\widetilde{T}$ and $\widetilde{H}$ are representable by affine spaces, two vertical maps are surjective.
One can also show surjectivity of two vertical maps by using Hensel's lemma
since $\widetilde{T}$ and $\widetilde{H}$ are smooth. 
In addition, $\widetilde{\psi_{R}}$ is surjective by construction.
Therefore, $\widetilde{\psi_{R_{\kappa}}}$ is surjective as well.
\end{proof}

\begin{Thm}
Let $R$ be a flat $A$-algebra.
Then the image of $\widetilde{T}(R)$, under the map $\varphi_{\alpha, R}$, is a subset of $\widetilde{H}(R)$.
Therefore, the morphism $\varphi_{\alpha} : \widetilde{T} \rightarrow H^0$ factors through $\widetilde{H}$.
\end{Thm}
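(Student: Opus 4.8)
The plan is to reduce the statement to the case $R = A$, which is immediate, and then to bootstrap to an arbitrary flat $A$-algebra $R$ via the description $\widetilde{T}(R) = R\otimes_A\widetilde{T}(A)$ together with the additivity identity for $\varphi_{\alpha}$; the factorization of schemes will then follow by evaluating at the universal point of $\widetilde{T}$. For $R = A$: by the very definition of $\alpha$ one has $T^{\alpha} = T^{\alpha+1}$, so $\widetilde{T}(A) = T^{\alpha}(A) = T^{\alpha+1}(A) \subseteq \mathrm{Ker~}\overline{\varphi_{\alpha, A}}$, and unwinding the definition of $\overline{\varphi_{\alpha, A}}$ this says exactly that $\varphi_{\alpha, A}(X) \in \mathrm{Im~}\psi_{\alpha, A} = \widetilde{H}(A)$ for every $X\in\widetilde{T}(A)$.

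For a general flat $A$-algebra $R$, fix an $A$-basis $X_1,\dots,X_N$ of the free module $\widetilde{T}(A)$ and write a typical element of $\widetilde{T}(R) = R\otimes_A\widetilde{T}(A)$ as $X = \sum_j r_j X_j$ with $r_j\in R$ (abusively writing $X_j$ for $1\otimes X_j$). Iterating the identity
$$\varphi_{\alpha, R}(Y+Z) = \varphi_{\alpha, R}(Y) + \varphi_{\alpha, R}(Z) + \psi_{\alpha, R}(Y^{ad}\cdot Z),$$
which holds over any flat $A$-algebra by the same direct computation from $\varphi_{\alpha, R}(W)(v,w) = h(W(v),W(w))$ as in Section 3.4.3 and the proof of Theorem 3.9, and whose right-hand side is meaningful because $Y^{ad}\cdot Z\in\widetilde{T}(R)$, one obtains
$$\varphi_{\alpha, R}(X) = \sum_j\varphi_{\alpha, R}(r_j X_j) + \sum_{i<j}\psi_{\alpha, R}\bigl((r_i X_i)^{ad}\cdot(r_j X_j)\bigr).$$
Since each $r_j$ is central in $B\otimes_A R$ and fixed by $\sigma$, the rule $h(v\cdot a, w\cdot b) = \sigma(a)h(v,w)b$ gives $\varphi_{\alpha, R}(r_j X_j) = r_j^{2}\,\varphi_{\alpha, A}(X_j)$ and $(r_i X_i)^{ad} = r_i\,X_i^{ad}$. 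By the $R = A$ case choose $Z_j\in\widetilde{T}(A)$ with $\varphi_{\alpha, A}(X_j) = \psi_{\alpha, A}(Z_j)$, and note that $X_i^{ad}\cdot X_j\in\widetilde{T}(A) = T^{\alpha}(A)$ by Theorem 3.9 applied to the étale algebra $A$. Using the $R$-linearity of $\psi_{\alpha, R}$ recalled at the start of Subsection 3.13, the right-hand side equals $\psi_{\alpha, R}\bigl(\sum_j r_j^{2}Z_j + \sum_{i<j} r_i r_j\,X_i^{ad}\cdot X_j\bigr)$, whose argument lies in $R\otimes_A\widetilde{T}(A) = \widetilde{T}(R)$; hence $\varphi_{\alpha, R}(X)\in\mathrm{Im~}\psi_{\alpha, R} = \widetilde{H}(R)$, which is the first assertion.

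Finally, the factorization of $\varphi_{\alpha}$ through $\widetilde{H}$ follows by Yoneda: the coordinate ring $A[\widetilde{T}]$ is a polynomial, in particular flat, $A$-algebra, and the identity morphism of $\widetilde{T}$ is the universal point $X^{\mathrm{univ}}\in\widetilde{T}(A[\widetilde{T}])$, so the inclusion just proved with $R = A[\widetilde{T}]$ gives $\varphi_{\alpha, A[\widetilde{T}]}(X^{\mathrm{univ}})\in\widetilde{H}(A[\widetilde{T}])$; since $\widetilde{H}$ is representable (Theorem 3.14), this element is a morphism $\widetilde{T}\to\widetilde{H}$, and tracing through Yoneda its composite with the natural map $\widetilde{H}\to H^0$ is $\varphi_{\alpha}$. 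I do not anticipate a genuine obstacle: the only points requiring care are the degree-two homogeneity of $\varphi_{\alpha, R}$ under scaling by central, $\sigma$-fixed elements and the fact that $Y^{ad}\cdot Z$ never leaves $\widetilde{T}(R)$, both immediate from the definition of $\varphi_{\alpha}$ and from Theorem 3.9, and the whole statement is essentially a formal consequence of the stabilization $T^{\alpha} = T^{\alpha+1}$, the additivity identity, and the $R$-linearity of $\psi_{\alpha}$.
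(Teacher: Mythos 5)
Your proof is correct and follows essentially the same strategy as the paper's: establish the base case at $R=A$ by noting that the stabilization $T^{\alpha}=T^{\alpha+1}$ forces $\widetilde{T}(A)\subseteq\mathrm{Ker}\,\overline{\varphi_{\alpha,A}}$, then reduce the general flat case by writing $X=\sum_i r_i X_i$ with $X_i\in\widetilde{T}(A)$ and expanding quadratically to get $\varphi_{\alpha,R}(X)=\sum_i r_i^2\varphi_{\alpha,A}(X_i)+\sum_{i<j}r_ir_j\psi_{\alpha,A}(X_i^{ad}X_j)$, all of whose terms lie in $R\otimes_A\widetilde{H}(A)$. The only cosmetic differences are that the paper phrases the base case for arbitrary \'etale $R$ (you only need $R=A$, which you correctly isolate), that you obtain the expansion by iterating the additivity identity rather than expanding the bilinear form directly, and that you spell out the Yoneda step for the factorization of schemes, which the paper leaves implicit.
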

\begin{proof}
Let $R$ be an \'etale $A$-algebra.
Suppose  that there is $X\in T^{\alpha}(R)=\widetilde{T}(R)$ such that $\varphi_{\alpha, R}(X) \notin \widetilde{H}(R)$.
Then $\overline{\varphi_{\alpha, R}}(X)\neq 0$ and so $X$ is not contained in $T^{\alpha+1}(R)$.
This  contradicts the  definition of $\alpha$. Thus $\varphi_{\alpha, R}(X) \in \widetilde{H}(R)$ for any $X\in \widetilde{T}(R)$.

Let $R$ be a flat $A$-algebra.
Choose $X\in T^{\alpha}(R)$.
We can write $X=\sum_ir_i\cdot X_i$ with $r_i\in R$ and $X_i\in T^{\alpha}(A)=\widetilde{T}(A)$.
Then $\varphi_{\alpha, R}(X)=\sum_ir_i^2\cdot \varphi_{\alpha, A}(X_i)+\sum_{i<j}r_ir_j\cdot \psi_{\alpha, A}(X_i^{ad}\cdot X_j)$.
Since $\varphi_{\alpha, A}(X_i)$, $\psi_{\alpha, A}(X_i^{ad}\cdot X_j) \in \widetilde{H}(A)$ and $\widetilde{H}(R)=R\otimes_A\widetilde{H}(A)$,
$\varphi_{\alpha, R}(X)$ is contained in $\widetilde{H}(R)$.
\end{proof}
Let $\widetilde{\varphi}$ be the morphism from $\widetilde{T}$ to $\widetilde{H}$ induced from $\varphi_{\alpha}$.
In conclusion, we have constructed two morphisms, namely  $\widetilde{\varphi}$ and $\widetilde{\psi}$, from $\widetilde{T}$ to $\widetilde{H}$.

\subsection{Construction of $\underline{G}$}
   We define two functors from the category of commutative flat $A$-algebras to the category of sets as follows:
     \[\underline{M}(R)=\{1+X : X\in \widetilde{T}(R)\},\]
     \[\underline{H}(R)=\{h+f : f\in \widetilde{H}(R)\}.\]
Then these two functors are  representable by flat $A$-algebras which are polynomial rings over $A$.
Note that $h$ is the fixed hermitian form throughout this paper.
 \begin{Lem}
The functor $\underline{M}$ is a functor from the category of commutative flat $A$-algebras to the category of monoids under multiplication.
      \end{Lem}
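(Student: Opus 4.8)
The plan is to show that $\underline{M}(R)$ is closed under multiplication and contains the identity $1$, for any commutative flat $A$-algebra $R$; associativity and the identity axiom are then automatic since multiplication takes place inside the associative ring $\mathrm{End}_{B\otimes_AR}(L\otimes_AR)$ and $1 = 1 + 0$ with $0 \in \widetilde{T}(R)$. So the only real content is closure: given $X, Y \in \widetilde{T}(R) = T^\alpha(R)$, I must produce $Z \in \widetilde{T}(R)$ with $(1+X)(1+Y) = 1 + Z$, i.e. $Z = X + Y + XY$. Thus it suffices to prove $X + Y + XY \in T^\alpha(R)$.

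First I would reduce to the case $R = A$: since $\widetilde{T}(R) = R \otimes_A \widetilde{T}(A)$ and both $\widetilde{T}$ and the natural candidate expression are compatible with base change along $A \to R$, it is enough to treat $R$ étale over $A$ (where $T^\alpha$ has its concrete description as $\{X \in \mathrm{Ker}\,\overline{\varphi_{\alpha-1,R}} : X^{ad} \in \mathrm{Ker}\,\overline{\varphi_{\alpha-1,R}}\}$, by the discussion following the definition of $T^{m+1}$ on flat $A$-algebras), and then descend. Actually the cleanest route is: $T^\alpha(A) = T^{\alpha+1}(A)$ by definition of $\alpha$, so $X, Y$ lie in $T^{\alpha+1}(A)$ as well. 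By Theorem 3.10 (with $m = \alpha$), $X^{ad} \cdot Y \in T^\alpha(A)$, and the key identity $\varphi_{\alpha,R}(X+Y) = \varphi_{\alpha,R}(X) + \varphi_{\alpha,R}(Y) + \psi_{\alpha,R}(X^{ad}\cdot Y)$ from the proof of that theorem, together with $\varphi_{\alpha,R}(X), \varphi_{\alpha,R}(Y) \in \widetilde{H}(R)$ (Theorem 3.15) and $\psi_{\alpha,R}(X^{ad}\cdot Y) \in \widetilde{H}(R)$, shows $\overline{\varphi_{\alpha,R}}(X+Y) = 0$. The same works for the adjoint, so $X + Y \in T^{\alpha+1}(R) = T^\alpha(R)$.

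Next I need $XY \in T^\alpha(R)$. Since $X \in T^{\alpha+1}(R)$ and $Y \in T^\alpha(R)$, Corollary 3.11 (applied with $m = \alpha$, using $T^{\alpha+1} = T^\alpha$) gives $X \cdot Y \in \mathrm{Ker}\,\overline{\varphi_{\alpha,R}}$; and symmetrically $(X\cdot Y)^{ad} = Y^{ad} \cdot X^{ad} \in \mathrm{Ker}\,\overline{\varphi_{\alpha,R}}$ since $X^{ad} \in T^{\alpha+1}(R)$ as well and we may apply Corollary 3.11 in the other order — here I would first check $Y^{ad} \in T^\alpha(R)$, which holds because $T^\alpha(R) = T^{\alpha+1}(R)$ is $ad$-stable by its very definition. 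Hence $X\cdot Y \in T^{\alpha+1}(R) = T^\alpha(R) = \widetilde{T}(R)$. Finally, adding: $Z = (X+Y) + XY$ is a sum of two elements of the $A$-module $T^\alpha(R)$, hence lies in $T^\alpha(R) = \widetilde{T}(R)$, so $(1+X)(1+Y) = 1 + Z \in \underline{M}(R)$.

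The main obstacle is purely bookkeeping: one must be careful that all the lemmas of Section 3.4 (Theorem 3.10, Corollary 3.11), which are stated for étale $A$-algebras, get transported correctly to arbitrary flat $A$-algebras. The mechanism for this is already supplied in Section 3.5 — the functors $T^{m+1}$ on flat algebras are defined by $R \otimes_A T^{m+1}(A)$ and agree with the étale description after base change — so the work is to invoke it consistently rather than to prove anything genuinely new. I would therefore phrase the argument over $A$ first (where the étale-algebra lemmas apply verbatim, $A$ being a henselian DVR), obtain closure of $\underline{M}(A)$, and then extend to flat $R$ by tensoring, noting that $1 + (R \otimes Z) = 1 + Z_R$ exhibits the product in $\underline{M}(R)$.
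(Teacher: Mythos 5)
Your argument is correct and rests on the same core idea as the paper's proof: reduce closure to showing $X + X' + XX' \in \widetilde{T}(R)$, handle the product $XX'$ via the multiplicative closure results of Section 3.4 (transported from \'etale to general flat $A$-algebras through $\widetilde{T}(R) = R\otimes_A\widetilde{T}(A)$), and handle the sum by the $R$-module structure of $\widetilde{T}(R)$. Two remarks on your presentation. First, your paragraph proving $X+Y\in T^{\alpha}(R)$ by computing $\overline{\varphi_{\alpha,R}}(X+Y)=0$ via the identity $\varphi_{\alpha,R}(X+Y)=\varphi_{\alpha,R}(X)+\varphi_{\alpha,R}(Y)+\psi_{\alpha,R}(X^{ad}\cdot Y)$ has no content: $\widetilde{T}(R)=R\otimes_A\widetilde{T}(A)$ is already an $R$-module by definition, as you yourself observe in the final sentence, so the sum is automatically in $\widetilde{T}(R)$ with no need to invoke $T^{\alpha+1}$. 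Second, for the product the paper appeals to Theorem 3.9 directly, which gives $X^{ad}\cdot Y\in T^{m}(R)$ for $X,Y\in T^{m}(R)$; since $T^{\alpha}$ is ad-stable, feeding in $X^{ad}$ in place of $X$ yields $X\cdot X'=(X^{ad})^{ad}\cdot X'\in T^{\alpha}(R)$ in one step, whereas you route through Corollary 3.10 together with the identity $T^{\alpha+1}=T^{\alpha}$ and a separate check of the adjoint. Both arguments are valid; yours is simply a longer chain to the same conclusion.
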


    \begin{proof}
Choose $1+X, 1+X' \in \underline{M}(R)$ for a flat $A$-algebra $R$.
Then it suffices to show that $(1+X)(1+X')-1\in \widetilde{T}(R)$.
Since $X\cdot X'\in \widetilde{T}(R)$ by Theorem 3.9 combined with the fact that $\widetilde{T}(R)=R\otimes_A\widetilde{T}(A)$ and $\widetilde{T}(R)$ is an additive group, we have that $X+X'+X\cdot X'\in \widetilde{T}(R)$.
\end{proof}

For any commutative  $A$-algebra $R$, set
 $$\underline{M}^{\ast}(R)=\{ m \in \underline{M}(R) :  \textit{there exists $m^{-1}\in \underline{M}(R)$ such that $m\cdot m^{-1}=m^{-1}\cdot m=1$}\}.$$
Then $\underline{M}^{\ast}$ is represented by a group scheme $\underline{M}^{\ast}$
and $\underline{M}^{\ast}$ is an open subscheme of $\underline{M}$, with generic fiber $M^{\ast}=\underline{\mathrm{GL}_K(V)}_{/F}$,
and thus $\underline{M}^{\ast}$ is smooth over $A$ since $\underline{M}$ is smooth over $A$.
The proofs of these 
 are similar to  Section 3.2 of \cite{C1}, by using the following Lemma
 so we skip them.

\begin{Lem}
For a flat $A$-algebra $R$, let $m\in \underline{M}(R)$ such that $m^{-1}\in \mathrm{Aut}_{B\otimes_AR}(L\otimes_AR)$.
Then such $m^{-1}$ is an element of $\underline{M}(R)$.
\end{Lem}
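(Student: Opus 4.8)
The plan is to reduce the claim to a statement purely about finitely generated $B\otimes_AR$-modules, avoiding any flatness subtlety by exploiting that $L\otimes_AR$ is a free $B\otimes_AR$-module. First I would unwind the hypothesis: $m\in\underline{M}(R)$ means $m=1+X$ with $X\in\widetilde{T}(R)$, and the extra assumption is that $m$ is invertible as an endomorphism of $L\otimes_AR$, say with two-sided inverse $m^{-1}=g\in\mathrm{Aut}_{B\otimes_AR}(L\otimes_AR)$. The goal is to show $g\in\underline{M}(R)$, i.e. that $g-1\in\widetilde{T}(R)$. The key observation is that $g-1$ automatically lies in $T^0(R)$: indeed $g$ stabilizes $L^{\#}\otimes_AR$ because $g=m^{-1}$ and $m=1+X$ stabilizes $L^{\#}\otimes_AR$ (as $X\in\widetilde{T}(R)\subseteq T^0(R)$), and an invertible $B\otimes_AR$-linear endomorphism stabilizing a lattice stabilizes it bijectively, hence its inverse stabilizes it too; so $g\in T^0(R)$ and therefore $g-1\in T^0(R)$.

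The heart of the argument is then to promote membership in $T^0(R)$ to membership in $\widetilde{T}(R)=T^{\alpha}(R)$. Here I would argue inductively on $m=0,1,\dots,\alpha$ that $g-1\in T^m(R)$. For flat $A$-algebras each $T^m(R)=R\otimes_AT^m(A)$, and the inductive step amounts to checking that $g-1$ and its adjoint $(g-1)^{ad}$ lie in $\mathrm{Ker}\,\overline{\varphi_{m-1,R}}$, given that $X=m-1$ does (which holds since $X\in\widetilde{T}(R)\subseteq T^m(R)$ for all $m\le\alpha$, and $X^{ad}\in\widetilde{T}(R)$ likewise by definition of $T^{m+1}$). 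The relation tying $g$ to $X$ is $m\cdot g=1$, i.e. $(1+X)(1+(g-1))=1$, which rearranges to $g-1=-X-X\cdot(g-1)$. Writing $Y=g-1\in T^{m-1}(R)$ by the inductive hypothesis, we get $Y=-X-X\cdot Y$ with $X\in T^{m}(R)\subseteq T^{m-1}(R)$ and $Y\in T^{m-1}(R)$; by Corollary 3.10 (with $m-1$ in place of $m$) we have $X\cdot Y\in\mathrm{Ker}\,\overline{\varphi_{m-1,R}}$, and since $X\in\mathrm{Ker}\,\overline{\varphi_{m-1,R}}$ as well (being in $T^m(R)$), additivity of $\overline{\varphi_{m-1,R}}$ from Theorem 3.9 gives $\overline{\varphi_{m-1,R}}(Y)=0$. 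The adjoint side is symmetric: taking adjoints of $m\cdot g=1$ gives $g^{ad}\cdot m^{ad}=1$, so $Y^{ad}=g^{ad}-1$ satisfies $Y^{ad}=-X^{ad}-Y^{ad}\cdot X^{ad}$, and the same Corollary 3.10 argument with $X^{ad}\in T^m(R)$ shows $\overline{\varphi_{m-1,R}}(Y^{ad})=0$. Hence $Y\in T^m(R)$, closing the induction; at $m=\alpha$ we conclude $g-1\in T^{\alpha}(R)=\widetilde{T}(R)$, i.e. $m^{-1}\in\underline{M}(R)$.

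I expect the main obstacle to be the bookkeeping at the adjoint level: one must verify that forming the adjoint commutes with all the operations in play (products, the relation coming from $m\cdot g=1$) and that $Y^{ad}$ is genuinely the adjoint of $g-1$ in the sense of Proposition 3.3, which requires knowing $g\in T^0(R)$ first — this is exactly the lattice-stabilization point flagged above, and it is where Assumption 2.2, $(L^{\#})^{\#}=L$, enters, ensuring $T^0(R)$ is closed under adjunction. A second, more minor point is making sure Corollary 3.10 is applied with the correct indices: it is stated for \'etale $A$-algebras, so for a general flat $R$ one either extends it by the same $R\otimes_A(-)$ device used throughout Section 3.5, or one first proves the identity over $A$ and tensors up; I would spell out the flat-base-change reduction explicitly, noting that all the functors $T^m$, $H^0$ are representable by polynomial rings so that $\overline{\varphi_{m,R}}$, being built from $\varphi_{m,A}$ and $\psi_{m,A}$ by base change, behaves well. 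Apart from these two points the computation is the routine geometric-series-type inversion identity, so I would not belabor it.
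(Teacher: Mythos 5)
Your overall strategy matches the paper's: first establish $m^{-1}-1\in T^0(R)$, then induct up the filtration using the identity $(m-1)+(m^{-1}-1)+(m-1)\cdot(m^{-1}-1)=0$ together with its adjoint and Corollary 3.10. The paper phrases this via explicit expansions $m-1=\sum_i r_i X_i$ with $X_i\in\widetilde T(A)$ and $m^{-1}-1=\sum_j s_j Y_j$ with $Y_j\in T^0(A)$, which is precisely the flat-base-change reduction you flag as needed; you spell out the inductive step that the paper compresses into ``by Corollary 3.10 with an induction on $n$''.

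There is, however, a slip in the adjoint half of your inductive step. You take adjoints of $m\cdot g=1$, which yields $g^{ad}\cdot m^{ad}=1$ and hence the recursion $Y^{ad}=-X^{ad}-Y^{ad}\cdot X^{ad}$. But Corollary 3.10 controls products $X\cdot Y$ with the element of the deeper filtration step ($T^{m+1}$) on the \emph{left}: applied to $X^{ad}\in T^m(R)$ and $Y^{ad}\in T^{m-1}(R)$ it gives $X^{ad}\cdot Y^{ad}\in\mathrm{Ker}\,\overline{\varphi_{m-1, R}}$, not $Y^{ad}\cdot X^{ad}$, and the proof of Corollary 3.10 (the substitution $v'\mapsto Yv$, $w'\mapsto Yw$) does not transfer to the reversed order. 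You should instead take adjoints of $g\cdot m=1$, legitimate because $g$ is a two-sided inverse, giving $m^{ad}\cdot g^{ad}=1$ and hence $Y^{ad}=-X^{ad}-X^{ad}\cdot Y^{ad}$. This is precisely the paper's second displayed identity and is in the form Corollary 3.10 handles directly. Since $g$ is a two-sided inverse the two relations are algebraically equivalent, so the repair is immediate, but as written your invocation of Corollary 3.10 does not cover the product you produced.
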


\begin{proof}
By Lemma 3.2 of \cite{C1}, $m^{-1}\in T^0(R)$.
Let $m=1+\sum_ir_i\cdot X_i$ and  $m^{-1}=1+\sum_is_j\cdot Y_j$ with $X_i\in \widetilde{T}(A)$, $Y_j\in T^0(A)$ and $r_i, s_j\in R$.
Then $\sum_is_j\cdot Y_j=-\sum_ir_i\cdot X_i-(\sum_ir_i\cdot X_i)\cdot (\sum_is_j\cdot Y_j)$
and $\sum_is_j\cdot Y_j^{ad}=-\sum_ir_i\cdot X_i^{ad}-(\sum_ir_i\cdot X_i^{ad})\cdot (\sum_is_j\cdot Y_j^{ad})$.
Thus, by Corollary 3.10 with an induction on $n$, we have that $\sum_is_j\cdot Y_j\in T^{n}(R)$ for all $n\geq 0$.
\end{proof}

\begin{Thm}
    For any flat $A$-algebra $R$, the group $\underline{M}^{\ast}(R)$ acts on the right of $\underline{H}(R)$
    by $(h+f)\cdot m=(h+f)\circ m $. Then this action is represented by an action morphism
     \[\underline{H} \times \underline{M}^{\ast} \longrightarrow \underline{H} .\]
      \end{Thm}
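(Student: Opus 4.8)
The plan is to verify the statement in two stages: first that the formula $(h+f)\cdot m = (h+f)\circ m$ actually lands in $\underline{H}(R)$ for $m \in \underline{M}^\ast(R)$, and second that this assignment is functorial enough to be represented by a morphism of schemes. For the first stage, write $m = 1+X$ with $X \in \widetilde{T}(R)$. Then $(h+f)\circ m$ is the hermitian form sending $(v,w)$ to $(h+f)(m(v), m(w))$. Expanding, $(h+f)\circ m = h\circ m + f\circ m$, and $h\circ m = \varphi_{\alpha, R}(X)$ in the notation of Section 3.2 (with $X$ regarded as an element of $\widetilde{T}(R)$), which lies in $\widetilde{H}(R)$ by Theorem 3.16. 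So it remains to check that $f\circ m \in \widetilde{H}(R)$ when $f \in \widetilde{H}(R)$; here I would use that $f \in \mathrm{Im}~\psi_{\alpha, R}$, say $f = \psi_{\alpha, R}(Z)$ with $Z \in \widetilde{T}(R)$, and compute $f\circ m$ in terms of $\psi_{\alpha, R}$ applied to expressions built from $Z$, $X$, and their adjoints. The key algebraic input is that $\widetilde{T}(R)$ is closed under the operations $Y \mapsto Y^{ad}$ and $(Y, Y') \mapsto Y^{ad}\cdot Y'$ (Proposition 3.3 together with Theorem 3.9 and the definition $\widetilde{T} = T^\alpha$), so that $m^{ad}\cdot Z' \cdot m$-type products stay inside $\widetilde{T}(R)$ and hence their images under $\psi_{\alpha, R}$ stay inside $\widetilde{H}(R)$.

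Concretely, I expect the identity one needs is $(\psi_{\alpha,R}(Z))\circ m = \psi_{\alpha,R}\bigl(m^{ad}\cdot Z \cdot m\bigr)$, or a close variant, valid because $\psi_{0,R}(Y)(v,w) = h(v, Y(w)) + h(Y(v), w)$ transforms under precomposition with $m$ by moving $m$ and $m^{ad}$ onto $Y$. One must be slightly careful that $m^{ad}$ makes sense: since $m = 1+X$ with $X \in \widetilde{T}(R) \subseteq T^0(R)$, the adjoint $X^{ad}$ exists in $T^0(R)$ by Proposition 3.3, and in fact $X^{ad} \in \widetilde{T}(R)$ because $\widetilde{T} = T^\alpha$ and membership in $T^{\alpha}$ is closed under taking adjoints by construction; hence $m^{ad} = 1 + X^{ad}$, and $m^{ad}\cdot Z\cdot m - $ something lies in $\widetilde{T}(R)$ by the ring-closure properties just cited. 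Combining $h\circ m \in \widetilde{H}(R)$ with $f\circ m \in \widetilde{H}(R)$ gives $(h+f)\circ m \in \underline{H}(R)$, which is exactly what is needed for the action to be well-defined.

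For the representability, I would argue as in the analogous constructions in Section 3.2 and in \cite{C1}: both $\underline{H}$ and $\underline{M}^\ast$ are representable (the former by a polynomial ring over $A$, the latter as an open subscheme of the affine scheme $\underline{M}$), so it suffices to exhibit the action as given by polynomial formulas in the coordinates, which is immediate from the bilinear/quadratic expansion of $(h+f)\circ(1+X)$ in terms of $X$ and $f$ carried out above; these formulas are manifestly functorial in $R$. The verification that it is a genuine right action — associativity $(h+f)\cdot(m_1 m_2) = ((h+f)\cdot m_1)\cdot m_2$ and unitality $(h+f)\cdot 1 = h+f$ — is formal from $((h+f)\circ m_1)\circ m_2 = (h+f)\circ(m_1 m_2)$, which holds on the level of forms and extends to the scheme level since the two composite morphisms agree on all flat $A$-algebra points and the schemes in question are flat (hence reduced-enough / determined by such points, exactly the point already used implicitly throughout Section 3).

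The main obstacle, and the only step requiring genuine care rather than bookkeeping, is confirming that $f \circ m \in \widetilde{H}(R)$, i.e., identifying the precomposition action of $\underline{M}^\ast$ on $\widetilde{H}$ with an operation that visibly preserves $\mathrm{Im}~\psi_{\alpha,R}$. This is where one needs the full strength of the stabilization $\widetilde{T} = T^\alpha$ and the closure of $T^\alpha(R)$ under $Y \mapsto Y^{ad}$ and under the twisted multiplication $(Y,Y')\mapsto Y^{ad}\cdot Y'$ (Theorem 3.9); without the stabilization one would only get membership in $T^\alpha$'s predecessors and the argument would not close. Once that identity is in hand, everything else is the routine polynomiality-and-functoriality packaging that the paper has already deployed repeatedly, so I would keep that part brief and refer to Section 3.2 of \cite{C1}.
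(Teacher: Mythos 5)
Your overall plan is the paper's plan, and the key computation of $f\circ m$ is done the same way: write $f=\psi_{\alpha,R}(Z)$, observe $f\circ m = \psi_{\alpha,R}(Z)+\psi_{\alpha,R}(X^{ad}Z)+\psi_{\alpha,R}(ZX)+\psi_{\alpha,R}(X^{ad}ZX)$, and invoke Theorem~3.9 together with $\widetilde{T}(R)=R\otimes_A\widetilde{T}(A)$ to keep the arguments inside $\widetilde{T}(R)$. Your compact formulation $f\circ m=\psi_{\alpha,R}(m^{ad}Zm)$ is a slightly cleaner way of packaging what the paper writes term by term. The closure-under-adjoint observation ($X^{ad}\in\widetilde{T}(R)$ when $X\in\widetilde{T}(R)$, since $T^{\alpha+1}$ is defined symmetrically in $X$ and $X^{ad}$) is also right.

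However, the first stage contains a genuine slip that propagates to the final line. You write $h\circ m=\varphi_{\alpha,R}(X)$ and conclude $h\circ m\in\widetilde{H}(R)$; you then combine this with $f\circ m\in\widetilde{H}(R)$ to deduce $(h+f)\circ m\in\underline{H}(R)$. Neither step is correct as stated. By definition $\varphi_{\alpha,R}(X)(v,w)=h(Xv,Xw)$, while $h\circ m=h\circ(1+X)$, so the correct identity (the one the paper uses) is
\[
h\circ m - h = \varphi_{\alpha,R}(X)+\psi_{\alpha,R}(X),
\]
and it is this difference, not $h\circ m$ itself, that lands in $\widetilde{H}(R)$ by the factorization result for $\varphi_{\alpha}$. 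In fact $h\notin\widetilde{H}(R)=\mathrm{Im}\,\psi_{\alpha,R}$ in general: writing $h(v,w)=h(v,Yw)+h(Yv,w)$ would require $Y=1/2$, which is unavailable when $p=2$. For the same reason, $h\circ m,\ f\circ m\in\widetilde{H}(R)$ would only give $(h+f)\circ m\in\widetilde{H}(R)$, whereas you need $(h+f)\circ m\in\underline{H}(R)=h+\widetilde{H}(R)$, i.e.\ you must show $(h+f)\circ m - h\in\widetilde{H}(R)$. Replacing $h\circ m$ by $h\circ m - h$ in the first stage fixes everything: $(h+f)\circ m - h = (h\circ m - h)+f\circ m$ has both summands in $\widetilde{H}(R)$, so $(h+f)\circ m\in\underline{H}(R)$. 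With that correction the argument coincides with the paper's; the representability and group-action-axiom bookkeeping you outline is fine as is.
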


\begin{proof}
Let $R$ be a flat $A$-algebra.
It suffices to show that $(h+f)\circ m-h\in \widetilde{H}(R)$. 

Let $m=1+ X$, where $X\in \widetilde{T}(R)$.
Then $h\circ m-h=\widetilde{\varphi_R}(X)+\widetilde{\psi_R}(X)$.

Let $f=\widetilde{\psi_R}(Y)$ for some $Y\in \widetilde{T}(R)$.
Notice that we can always find such $Y$
since $\widetilde{\psi_R} : \widetilde{T}(R)\rightarrow \widetilde{H}(R)$ is surjective.
Then
$f\circ m =\widetilde{\psi_R}(Y)+\widetilde{\psi_R}(X^{ad}\cdot Y)+\widetilde{\psi_R}(Y\cdot X)+\widetilde{\psi_R}(X^{ad}\cdot Y\cdot X)$.
By Theorem 3.9 together with the fact that $\widetilde{T}(R)=R\otimes_A\widetilde{T}(A)$, $X^{ad}\cdot Y$, $Y\cdot X$ and $ X^{ad}\cdot Y\cdot X \in \widetilde{T}(R)$.
This completes the proof.
\end{proof}

 \begin{Thm}
  Let $\rho$ be the morphism $\underline{M}^{\ast} \rightarrow \underline{H}$ defined by $\rho(m)=h \circ m$.
  Then $\rho$ is smooth of relative dimension dim $G$.
  \end{Thm}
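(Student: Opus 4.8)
The plan is to prove smoothness of $\rho : \underline{M}^\ast \to \underline{H}$ by exhibiting it, locally, as a composition of an open immersion with a projection from a product, or equivalently by checking the infinitesimal criterion for smoothness directly on the level of $A$-algebras. Concretely, I would first identify the source: by Lemma 3.18 and the smoothness of $\underline{M}$ over $A$ (which was recorded when $\underline{M}^\ast$ was introduced as an open subscheme of $\underline{M}$), $\underline{M}^\ast$ is a smooth $A$-scheme, and $\underline{H}$ is an affine space over $A$ by construction (it is $h + \widetilde{H}$, and $\widetilde{H}$ is representable by a polynomial ring over $A$ of $n^2\cdot[K:F] - \dim G$ variables by Theorem 3.14). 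So both ends are smooth over $A$, and it suffices to show $\rho$ is smooth as a morphism, for which I would verify the formal smoothness criterion: given a flat $A$-algebra $R$ with a square-zero ideal $I \subset R$, a point $\bar m \in \underline{M}^\ast(R/I)$ and a lift $h + f \in \underline{H}(R)$ of $h\circ \bar m$, I must produce $m \in \underline{M}^\ast(R)$ lifting $\bar m$ with $h\circ m = h + f$.

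The key computational input is the formula already used in the proof of Theorem 3.19: for $m = 1 + X$ with $X \in \widetilde{T}(R)$ one has $h\circ m - h = \widetilde{\varphi_R}(X) + \widetilde{\psi_R}(X)$, and $\widetilde{\psi_R} : \widetilde{T}(R) \to \widetilde{H}(R)$ is surjective and $R$-linear (Theorem 3.14), while $\widetilde{\varphi_R}$ maps into $\widetilde{H}(R)$ (Theorem 3.15). Thus on tangent spaces the differential of $\rho$ at $1$ is essentially $\widetilde{\psi}$ up to the contribution of $\widetilde{\varphi}$, which is quadratic and hence vanishes to first order; since $\widetilde{\psi}$ is surjective with kernel of dimension $\dim G$ (the kernel being, generically, the Lie algebra of $G$), the differential is surjective with kernel of the right dimension. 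I would translate this into the lifting statement: lift $\bar m$ arbitrarily to some $m_0 = 1 + X_0 \in \underline{M}^\ast(R)$ (possible since $\underline{M}^\ast$ is smooth), compare $h\circ m_0$ with the target $h + f$ — their difference lies in $I\cdot\widetilde{H}(R)$ — use surjectivity of $\widetilde{\psi}$ together with $R$-linearity and $I^2 = 0$ to find a correction $X_1 \in I\cdot\widetilde{T}(R)$ so that $m = m_0(1 + X_1)$ satisfies $h\circ m = h+f$, and check $m$ is still invertible (it differs from $m_0$ by $1 + X_1$ with $X_1$ nilpotent, so $1+X_1 \in \underline{M}^\ast(R)$ by Lemma 3.18, and $\underline{M}^\ast$ is a group scheme by the discussion after Lemma 3.17). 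Finally, the relative dimension count: $\dim \underline{M}^\ast = \dim \underline{M} = n^2\cdot[K:F]$ (rank of $\widetilde{T}$) and $\dim \underline{H} = n^2\cdot[K:F] - \dim G$, so the fibers of $\rho$ have dimension $\dim G$, consistent with the fibers being torsors under (twisted forms of) the group scheme whose generic fiber is $G$.

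The main obstacle I anticipate is not the tangent-space computation, which is already essentially done in Theorems 3.14 and 3.15 and the proof of Theorem 3.19, but rather the bookkeeping needed to pass rigorously from the flat-$A$-algebra setting — where all functors are defined by base change from $A$ and all the identities hold — to the infinitesimal criterion over an arbitrary (not necessarily flat) $A$-algebra $R$ with square-zero ideal. One must be careful that $\widetilde{T}$, $\widetilde{H}$, $\underline{M}$, $\underline{H}$ are all representable by affine spaces (polynomial rings) over $A$, so their $R$-points for arbitrary $R$ are simply $R \otimes_A (\cdot)(A)$, and that the morphisms $\widetilde{\varphi}, \widetilde{\psi}, \rho$ extend to all $A$-algebras as morphisms of schemes; once this is granted, the surjectivity of $\widetilde{\psi}$ on $\kappa$-algebras (also in Theorem 3.14) and Hensel/square-zero lifting give what is needed. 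An alternative, perhaps cleaner, route is to factor $\rho$ through the graph: the map $\underline{M}^\ast \to \underline{M}^\ast \times_A \underline{H}$, $m \mapsto (m, h\circ m)$, followed by projection; combined with the fact that $\rho$ is, generically, the quotient map $M^\ast \to M^\ast/G \hookrightarrow H^\ast$ which is smooth because $G$ is smooth, one reduces to checking smoothness fiberwise over $\mathrm{Spec}\,A$, i.e. over $F$ (done) and over $\kappa$ (where surjectivity of $\widetilde{\psi}_{R_\kappa}$ from Theorem 3.14 is exactly the needed input). Either way the substance is the same and the real work is organizational.
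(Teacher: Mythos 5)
Your proposal is correct in outline and identifies the right ingredients, but your primary route (a direct verification of the infinitesimal lifting criterion) is not the one the paper takes; the paper instead follows the ``alternative route'' you sketch at the end, and that route is genuinely shorter. Concretely, the paper cites Theorem~5.5 of \cite{GY} (a criterion reducing smoothness of a morphism of flat finite-type $A$-schemes to smoothness of the two fibers of the correct relative dimension), and then proves smoothness of $\rho\otimes\kappa$ by the Jacobian criterion over $\bar\kappa$ (\cite{H}, III.10.4): at each $m=1+Y\in\underline{M}^{\ast}(\bar\kappa)$ it identifies the differential as $X\mapsto\widetilde{\psi_{\bar\kappa}}(m^{ad}\cdot X)$, uses surjectivity of $\widetilde{\psi_{\bar\kappa}}$ from Theorem~3.14, and then shows $X\mapsto m^{ad}\cdot X$ is a bijection of $\widetilde{T}(\bar\kappa)$. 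This last step --- well-definedness of the map $\underline{M}\times\widetilde{T}\to\widetilde{T}$, $(m,X)\mapsto m^{ad}\cdot X$, proved via Theorem~3.9 and the free-module description of $\widetilde{T}$ --- is the one nontrivial ingredient that your sketch underweights: in your lifting argument the correction $m_0(1+X_1)$ leads to the differential $X_1\mapsto\widetilde\psi((m_0^{ad}m_0)\cdot X_1)$, so you need exactly this stability of $\widetilde{T}$ under the left $\underline{M}$-action by adjoints, not merely surjectivity of $\widetilde\psi$. Your direct approach would work once that is supplied, together with the observation that surjectivity of $\widetilde\psi_R$ and the module identifications base-change from $A$ to arbitrary $R$ because everything in sight is represented by a polynomial ring over $A$; but it trades one citation (GY Thm.~5.5) for a page of bookkeeping over non-flat $R$, which is why the paper prefers the fiberwise reduction. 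Your relative-dimension count ($n^2[K{:}F]$ for $\underline{M}^{\ast}$ minus $n^2[K{:}F]-\dim G$ for $\underline{H}$) agrees with the paper's.
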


  \begin{proof}
  The theorem follows from Theorem 5.5 in \cite{GY} and the following lemma.
  \end{proof}

      \begin{Lem}
      The morphism $\rho \otimes \kappa : \underline{M}^{\ast}\otimes \kappa \rightarrow \underline{H}\otimes \kappa$
      is smooth of relative dimension $G$.
      \end{Lem}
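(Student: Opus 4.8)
The plan is to verify the Jacobian criterion at the identity section of $\underline{M}^{\ast}\otimes\kappa$ and then to propagate smoothness over the whole source using the $\underline{M}^{\ast}$-equivariance of $\rho$. First, both sides are smooth over $\kappa$: since $\underline{M}(R)=\{1+X:X\in\widetilde{T}(R)\}$ is isomorphic as a scheme to $\widetilde{T}$, which is a polynomial ring over $A$ in $n^{2}[K:F]$ variables, $\underline{M}\otimes\kappa$ is affine space over $\kappa$ of dimension $n^{2}[K:F]$, so its open subscheme $\underline{M}^{\ast}\otimes\kappa$ is smooth and irreducible of dimension $n^{2}[K:F]$; and $\underline{H}\otimes\kappa$ is, by Theorem 3.14, affine space over $\kappa$ of dimension $n^{2}[K:F]-\dim G$. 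For a morphism of smooth $\kappa$-schemes, smoothness at a point is equivalent to surjectivity of the differential there, the relative dimension being the difference of the local dimensions; hence it is enough to prove that the differential of $\rho\otimes\kappa$ at the identity is surjective and that the (automatically open) smooth locus of $\rho\otimes\kappa$ is then all of $\underline{M}^{\ast}\otimes\kappa$.

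For the propagation I would use equivariance. For any point $g$ of $\underline{M}^{\ast}$ one has $\rho(mg)=h\circ(mg)=(h\circ m)\circ g$, which is precisely the action of $g$ on $\rho(m)$ furnished by Theorem 3.19; thus $\rho\circ r_{g}=a_{g}\circ\rho$, where $r_{g}$ denotes right translation by $g$, an automorphism of $\underline{M}^{\ast}$, and $a_{g}$ the action of $g$, an automorphism of $\underline{H}$ with inverse $a_{g^{-1}}$. Therefore the smooth locus of $\rho\otimes\kappa$ is an open subscheme of $\underline{M}^{\ast}\otimes\kappa$ stable under all right translations; after base change to $\overline{\kappa}$ it is stable under right translation by every $\overline{\kappa}$-point, so once it is known to contain the identity it contains every $\overline{\kappa}$-point, hence every closed point, hence equals $\underline{M}^{\ast}\otimes\overline{\kappa}$, and this descends to $\kappa$.

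It remains to compute the differential at the identity. Identify $\underline{M}\otimes\kappa$ with $\widetilde{T}\otimes\kappa$ via $m=1+X$ and $\underline{H}\otimes\kappa$ with $\widetilde{H}\otimes\kappa$ via $h+f$. By the computation $h\circ(1+X)-h=\widetilde{\varphi_{R}}(X)+\widetilde{\psi_{R}}(X)$ from the proof of Theorem 3.19, the morphism $\rho\otimes\kappa$ is, in these coordinates, $X\mapsto(\widetilde{\varphi}\otimes\kappa)(X)+(\widetilde{\psi}\otimes\kappa)(X)$. By the expansion $\varphi_{\alpha,R}(X)=\sum_{i}r_{i}^{2}\varphi_{\alpha,A}(X_{i})+\sum_{i<j}r_{i}r_{j}\psi_{\alpha,A}(X_{i}^{ad}\cdot X_{j})$ from the proof of Theorem 3.15, the morphism $\widetilde{\varphi}\otimes\kappa$ is homogeneous of degree two, while $\widetilde{\psi}\otimes\kappa$ is $\kappa$-linear; hence the differential of $\rho\otimes\kappa$ at $X=0$ equals the $\kappa$-linear map $\widetilde{\psi}\otimes\kappa\colon\widetilde{T}\otimes\kappa\to\widetilde{H}\otimes\kappa$, which is surjective by Theorem 3.14 applied with $R_{\kappa}=\kappa$. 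Combining this with the equivariance step, $\rho\otimes\kappa$ is smooth, of relative dimension $n^{2}[K:F]-(n^{2}[K:F]-\dim G)=\dim G$.

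The argument is essentially formal once Theorem 3.14 is in hand; the points requiring care are the identification of the differential of $\rho\otimes\kappa$ at the identity with $\widetilde{\psi}\otimes\kappa$ — the linear part of $X\mapsto h\circ(1+X)-h$, which is legitimate because $\widetilde{H}(R)=R\otimes_{A}\widetilde{H}(A)$ so that the coefficient of the first-order term is genuinely a point of $\widetilde{H}\otimes\kappa$ — and the homogeneity step that upgrades surjectivity of the differential at a single point to global smoothness of $\rho\otimes\kappa$. I do not anticipate any serious obstacle.
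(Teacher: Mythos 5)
Your proof is correct, and it takes a mildly different route from the paper's. The paper verifies surjectivity of the differential of $\rho \otimes \bar{\kappa}$ at an \emph{arbitrary} $\bar{\kappa}$-point $m = 1 + Y$ directly, by identifying $\rho_{\ast,m}$ with $X \mapsto \widetilde{\psi_{\bar{\kappa}}}(m^{ad}\cdot X)$ and then showing that $X \mapsto m^{ad}\cdot X$ is a well-defined (hence bijective) endomorphism of $\widetilde{T}(\bar{\kappa})$, using Theorem 3.9 and $\widetilde{T}(R) = R\otimes_A \widetilde{T}(A)$. You instead verify surjectivity only at the identity, where the differential is just the linear part $\widetilde{\psi}$ of $X \mapsto h\circ(1+X)-h$, and then propagate to all points via the equivariance $\rho \circ r_g = a_g \circ \rho$ coming from the $\underline{M}^{\ast}$-action on $\underline{H}$ (Theorem 3.19). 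The two arguments encode the same information — the bijectivity of $X \mapsto m^{ad}\cdot X$ appearing in the paper is precisely the differential of right translation and of $a_m$ — but your packaging is more conceptual and avoids re-deriving the well-definedness of $X\mapsto m^{ad}\cdot X$ inside this proof, since it is already absorbed into the statement of Theorem 3.19. One small imprecision: for the Jacobian criterion over $\bar{\kappa}$ you should invoke Theorem 3.14 with $R_{\kappa} = \bar{\kappa}$ rather than $R_{\kappa} = \kappa$, though this costs nothing since that theorem applies to any $\kappa$-algebra.
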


\begin{proof}
    The proof is based on Lemma 5.5.2 in \cite{GY}.
    It is enough to check the statement over the algebraic closure $\bar{\kappa}$ of $\kappa$.
    By \cite{H}, III.10.4, it suffices to show that, for any $m \in \underline{M}^{\ast}(\bar{\kappa})$,
    the induced map on the Zariski tangent space $\rho_{\ast, m}:T_m \rightarrow T_{\rho(m)}$ is surjective.

We identify $T_m$ with $\widetilde{T}(\bar{\kappa})$ and $T_{\rho(m)}$ with $\widetilde{H}(\bar{\kappa})$.
Let $m=1+Y$ for some $Y\in \widetilde{T}(\bar{\kappa})$.
   The map $\rho_{\ast, m}:T_m \rightarrow T_{\rho(m)}$ is then
   $$X \mapsto \widetilde{\psi_{\bar{\kappa}}}(X+Y^{ad}\cdot X)=\widetilde{\psi_{\bar{\kappa}}}(m^{ad}\cdot X).$$
Since  $\widetilde{\psi_{\bar{\kappa}}} : \widetilde{T}(\bar{\kappa}) \longrightarrow \widetilde{H}(\bar{\kappa})$ 
 is surjective by Theorem 3.14,
it suffices to show that the following map
$$\widetilde{T}(\bar{\kappa})\longrightarrow \widetilde{T}(\bar{\kappa}), ~~~~~X\mapsto m^{ad}\cdot X$$
is bijective.
To prove the above, it is enough to show that
$\widetilde{T}(\bar{\kappa})\longrightarrow \widetilde{T}(\bar{\kappa}), ~~~~~X\mapsto m^{ad}\cdot X$ is well-defined so that
its inverse map $X \mapsto  (m^{-1})^{ad} \cdot X$ is  well-defined as well.
We consider a map $\underline{M}(R)\times \widetilde{T}(R)\longrightarrow \widetilde{T}(R), ~~~~~ (m, X)\mapsto m^{ad}\cdot X$, for a flat $A$-algebra $R$.
It is easy to show that this map is well-defined by using Theorem 3.9 together with the fact that $\widetilde{T}(R)=R\otimes_A\widetilde{T}(A)$ and  is represented by a morphism of schemes.
Therefore, a map $\underline{M}(\bar{\kappa})\times \widetilde{T}(\bar{\kappa})\longrightarrow \widetilde{T}(\bar{\kappa}), ~~~~~ (m, X)\mapsto m^{ad}\cdot X$
is well-defined as well.
 This completes the proof.
\end{proof}

 Let $\underline{G}$ be the stabilizer of $h$ in $\underline{M}^{\ast}$.
 It is  smooth and an affine subgroup scheme of $\underline{M}^{\ast}$, defined over $A$.
  Thus, we have the following theorem.
 \begin{Thm}
 The group scheme $\underline{G}$ with generic fiber $G$ is smooth,
 and $\underline{G}(R)=\mathrm{Aut}_{B\otimes_AR}(L\otimes_A R,h\otimes_A R)$ for any \'{e}tale $A$-algebra $R$.
 In conclusion, $\underline{G}$ is the desired smooth integral model of $G$.
 \end{Thm}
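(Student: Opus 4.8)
The plan is to verify separately that $\underline{G}$ is smooth, that its generic fiber is $G$, and that it has the prescribed \'etale-point functor, and then to apply Theorem 3.1 to conclude that it is \emph{the} smooth integral model of $G$. Since $0\in\widetilde{H}(A)$, the fixed form $h=h+0$ is an $A$-point of $\underline{H}$, and by construction $\underline{G}$ is the fiber $\rho^{-1}(h)=\underline{M}^{\ast}\times_{\underline{H},\,h}\mathrm{Spec}\,A$ of the morphism $\rho\colon\underline{M}^{\ast}\to\underline{H}$, $m\mapsto h\circ m$, of Theorem 3.21. Since $\rho$ is smooth of relative dimension $\dim G$ and smoothness is stable under base change, $\underline{G}\to\mathrm{Spec}\,A$ is smooth of relative dimension $\dim G$; it is affine, being closed in $\underline{M}^{\ast}$, and it is a subgroup scheme of $\underline{M}^{\ast}$, being the stabilizer of a point for the action morphism of Theorem 3.19. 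For the generic fiber, base change to $F$ gives $\underline{M}^{\ast}_{F}=\underline{\mathrm{GL}_{K}(V)}_{/F}$, and since the defining condition $f(V,L^{\#}\otimes_{A}F)\subset B\otimes_{A}F$ of $H^{0}(F)$ is vacuous over $F$, $\widetilde{H}(F)=H^{0}(F)$ is the full space of hermitian forms on $V$; hence $\underline{G}_{F}=\rho_{F}^{-1}(h)$ is exactly the stabilizer of $h$ in $\underline{M}^{\ast}_{F}$, which is $G$.

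It remains to identify $\underline{G}(R)$ for an \'etale, hence flat, $A$-algebra $R$. If $m\in\underline{G}(R)$ then $m\in\underline{M}^{\ast}(R)$, so $m=1+X$ with $X\in\widetilde{T}(R)\subseteq T^{0}(R)$ and $m\in\mathrm{Aut}_{B\otimes_{A}R}(L\otimes_{A}R)$, while $h\circ m=h$ says that $m$ preserves $h$; thus $m\in\mathrm{Aut}_{B\otimes_{A}R}(L\otimes_{A}R,h\otimes_{A}R)$. For the reverse inclusion, let $g\in\mathrm{Aut}_{B\otimes_{A}R}(L\otimes_{A}R,h\otimes_{A}R)$. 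Being a bijective isometry, $g$ (and $g^{-1}$) carries $L^{\#}\otimes_{A}R$ into itself, so $g,g^{-1}\in T^{0}(R)$; writing $g=1+X$, the identity $h(gv,gw)=h(v,w)$ rearranges to $\varphi_{0,R}(X)=-\psi_{0,R}(X)$, so $X\in\mathrm{Ker}\,\overline{\varphi_{0,R}}$. Moreover $g^{ad}=g^{-1}$ because $g$ is an isometry, so $X^{ad}=g^{-1}-1$, and running the same computation for the isometry $g^{-1}$ gives $X^{ad}\in\mathrm{Ker}\,\overline{\varphi_{0,R}}$; hence $X\in T^{1}(R)$ by Theorem 3.6.

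One then shows inductively that $X\in T^{m}(R)$ forces $X\in T^{m+1}(R)$: since $\varphi_{m,R}$ and $\psi_{m,R}$ are $\varphi_{0,R}$ and $\psi_{0,R}$ precomposed with the inclusion $T^{m}(R)\hookrightarrow T^{0}(R)$, we get $\varphi_{m,R}(X)=-\psi_{0,R}(X)=-\psi_{m,R}(X)\in\mathrm{Im}\,\psi_{m,R}$, and likewise for $X^{ad}$, which itself lies in $T^{m}(R)$ since $(X^{ad})^{ad}=X$ by Proposition 3.3; so $X\in T^{m+1}(R)$. Stabilizing at $\alpha$ gives $X\in\widetilde{T}(R)$, so $g=1+X\in\underline{M}(R)$, and the same argument for $g^{-1}=1+X^{ad}$ puts $g\in\underline{M}^{\ast}(R)$; since $h\circ g=h$, we get $g\in\underline{G}(R)$. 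Therefore $\underline{G}(R)=\mathrm{Aut}_{B\otimes_{A}R}(L\otimes_{A}R,h\otimes_{A}R)=\underline{G}^{\prime}(R)$ for every \'etale $A$-algebra $R$, and by Theorem 3.1 the smooth affine group scheme over $A$ with generic fiber $G$ and this \'etale-point functor is unique; hence $\underline{G}$ is the desired smooth integral model.

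I expect the main obstacle to be the reverse inclusion in the \'etale-point computation — in particular, running the induction $X\in T^{m}(R)\Rightarrow X\in T^{m+1}(R)$ while keeping careful track of adjoints and of how the maps $\varphi_{m,R},\psi_{m,R}$ factor through $\varphi_{0,R},\psi_{0,R}$, and first verifying that a bijective isometry over $R$ really preserves $L^{\#}\otimes_{A}R$ (so that it lies in $T^{0}(R)$ and admits an adjoint), which is where the flatness of $R$ and Assumption 2.2 (through Proposition 3.3) enter.
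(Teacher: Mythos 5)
Your proof is correct and follows essentially the same route as the paper's: $\underline{G}$ is realized as the fiber of the smooth morphism $\rho$ over $h$, hence smooth, and the \'etale-point identification is done by showing a lift $g=1+X$ of an $R$-automorphism preserving $h$ satisfies $\varphi_{0,R}(X)=-\psi_{0,R}(X)$ and $\varphi_{0,R}(X^{ad})=-\psi_{0,R}(X^{ad})$, then inducting to get $X\in T^m(R)$ for all $m$ and hence $X\in\widetilde{T}(R)$. You merely spell out a few steps the paper leaves implicit (the generic-fiber check, why $g\in T^0(R)$, and why $g^{-1}\in\underline{M}(R)$ so that $g\in\underline{M}^{\ast}(R)$), which is fine.
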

\begin{proof}
We choose an element $1+X\in \mathrm{Aut}_{B\otimes_AR}(L\otimes_A R,h\otimes_A R)$ for any \'{e}tale $A$-algebra $R$.
Then we only need to show that $1+X \in \underline{M}^{\ast}(R)$ and it suffices to show that $ X\in \widetilde{T}(R)$.
Firstly, it is clear that  $1+X\in T^0(R)$ so $X\in T^0(R)$.

Secondly, since $h\circ (1+X)=h$,
we have that
$\varphi_{0, R}(X)=\psi_{0, R}(-X).$
On the other hand,
$\varphi_{0, R}(X^{ad})=\psi_{0, R}(-X^{ad}).$
Therefore, by using induction, we conclude that $ X\in T^m(R)$ for all $m\geq 0$  so $ X\in \widetilde{T}(R)$.
\end{proof}


\section{Applications}
\subsection{Comparison of \cite{GY}, \cite{C1}, \cite{C2} and this paper}
As the first application, we explain how constructions of smooth integral models  in \cite{GY}, \cite{C1}, \cite{C2}
can be recovered from the construction studied in this paper, by finding $\alpha$ which is defined in the paragraph after Theorem 3.12.
We use terminology of \cite{C1} and \cite{C2} in the following $(2)$ and $(3)$, respectively.
\begin{enumerate}
\item If $p\neq 2$, then $\alpha=0$. Indeed, $\alpha=0$ for all cases covered in \cite{GY}.
\item Let $(K, \epsilon, p)=(F, 1, 2)$ and assume that $F/\mathbb{Q}_2$ is unramified, which is the case covered in \cite{C1}.
Then $\alpha$ is at most $2$.
Explicitly,
\begin{itemize}
\item if all Jordan components of $(L, h)$ are \textit{of type II}, then $\alpha=0$;
\item if there is a Jordan component \textit{free of type I} and if there is no Jordan component \textit{bound of type I},
then $\alpha=1$;
\item if there is a  Jordan component  \textit{bound of type I}, then $\alpha=2$.
\end{itemize}
\item Let $(K, \epsilon, p)=(E, 1, 2)$ and assume that $F/\mathbb{Q}_2$ is unramified and $E/F$ is ramified, which is the case covered in \cite{C2}.
Then $\alpha$ is at most $2$. Explicitly,
\begin{itemize}
\item when $E/F$ satisfies \textit{Case 1}, $\alpha=1$ if there is at least one Jordan component \textit{of type I}, and $\alpha=0$ otherwise.
\item When $E/F$ satisfies \textit{Case 2},
$\alpha=0$ if all Jordan components are \textit{of type II},
$\alpha=2$ if there is a Jordan component $L_i$ \textit{of type I} with $i$ even,
and $\alpha=1$ otherwise.
\end{itemize}
\end{enumerate}
In all cases, $\underline{M}^{\ast}$ and $\underline{H}$ (or $\underline{Q}$ in \cite{C1}) defined in \cite{GY}, \cite{C1}, \cite{C2}
are exactly the same as $\underline{M}^{\ast}$ and  $\underline{H}$ defined in this paper.

\subsection{The local density formula}
One major application of smooth integral models is a computation of local densities.
For a detailed explanation of a relation between local densities and smooth integral models, see the introduction of this paper and Section 3 of \cite{GY}.

As we have seen in Section 3.16, the construction of $\underline{G}$ simply follows from those of $\widetilde{T}$ and $\widetilde{H}$.
Furthermore, from the facts that $\widetilde{T}(R)=R\otimes_A\widetilde{T}(A)$ for a flat $A$-algebra $R$ (cf. Section 3.11),
 the only issue in the construction of $\underline{G}$ is $\widetilde{T}(A)$ and so $\underline{G}$ is constructed from some computation with $R=A$.

We briefly explain where $\widetilde{T}(A)$ and $\widetilde{H}(A)$ can be found in this paper.
Recall that we have the sequence
$$T^0(A)\supseteq T^1(A)\supseteq \cdots \supseteq T^m(A)\supseteq  \cdots$$
 at the paragraph before Theorem 3.12.
 Here, $T^0(A)$ is defined at the beginning of Section 3.2, $T^{1}(A)$ is defined at the beginning of Section 3.5,
 and $T^{m}(A)$ is defined at the beginning of Section 3.11.
 This sequence  stabilizes for a nonnegative integer $\alpha$ by Theorem 3.12
 and a method to find such $\alpha$ is explained at the paragraph after Theorem 3.12.
Finally $\widetilde{T}(A)=T^{\alpha}(A)$ at the paragraph after Theorem 3.12 and
$\widetilde{H}(A)$ is defined at the beginning of Section 3.13.

\begin{Thm}
Let $\tilde{G}$ be the special fiber of $\underline{G}$ and let $q$ be the cardinality of $\kappa$.
Let $M'=\mathrm{End}_{B}(L)$ and
$H'=
\{\textit{$f$ : $f$ is  hermitian form on $L$}\}$.
Notice that $M'/\widetilde{T}(A)$ and $H'/\widetilde{H}(A)$ are finitely generated torsion $A$-modules.
Let $$q^N=\frac{\#(H'/\widetilde{H}(A))}{\#(M'/\widetilde{T}(A))}.$$
Here, $\#$ stands for the cardinality and $N$ is an integer.
Then the local density of ($L,h$) is
$$\beta_L=\frac{1}{[G:G^o]}q^N \cdot q^{-\mathrm{dim~} G} \cdot \#\tilde{G}(\kappa),$$
where $G^o$ is the identity component of $G$.
\end{Thm}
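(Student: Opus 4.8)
The plan is to reduce the local density, which by Gan--Yu is an integral $\beta_L=\int_{\mathrm{Aut}_A(L,h)}|\omega^{\mathrm{ld}}|$, to the canonical integral $\int_{\mathrm{Aut}_A(L,h)}|\omega^{\mathrm{can}}|=q^{-\dim G}\cdot\#\underline{G}(A/\pi A)$ by comparing the two volume forms, and then to account for the gap between $\#\underline{G}(A/\pi A)=\#\widetilde{G}(\kappa)$ and the actual count of $A$-points via the component-group factor $[G:G^o]$. First I would recall, following Gan--Yu, that the morphism $\rho:\underline{M}^{\ast}\to\underline{H}$, $m\mapsto h\circ m$, is smooth of relative dimension $\dim G$ (Theorem 3.20), so that $\underline{G}=\rho^{-1}(h)$ is smooth over $A$; this is precisely the hypothesis needed to run the volume-form comparison. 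On the generic fiber one has the exact sequence of $F$-vector spaces coming from the differential of $\rho$ at the identity, namely $0\to\mathrm{Lie}\,G\to M'\otim_A F\xrightarrow{\psi_{0,F}} H'\otimes_A F\to 0$, which lets one pull back an invariant top form $\omega_H$ on $\underline{H}$ and a fixed top form $\omega_{M^{\ast}}$ on $\underline{M}^{\ast}$ to get the canonical form $\omega^{\mathrm{can}}$ on $\underline{G}$ (with $|\omega^{\mathrm{can}}|$ giving mass $q^{-\dim G}\#\widetilde{G}(\kappa)$ by the Weil/Oesterl\'e count for smooth schemes over $A$).

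Next I would carry out the core bookkeeping: the form $\omega^{\mathrm{ld}}$ used in the definition of $\beta_L$ is built from the naive lattice data, i.e.\ from $M'=\mathrm{End}_B(L)$ and $H'=\{\text{hermitian forms on }L\}$, whereas $\omega^{\mathrm{can}}$ is built from $\widetilde{T}(A)$ and $\widetilde{H}(A)$, which are the $A$-lattices actually representing the smooth model (Sections 3.5--3.13). Since $\widetilde{T}(A)\subseteq M'$ and $\widetilde{H}(A)\subseteq H'$ are finite-index $A$-submodules with the same generic fiber, changing the reference lattice on the ``source'' side $M'\rightsquigarrow\widetilde{T}(A)$ multiplies the measure by $\#(M'/\widetilde{T}(A))^{-1}$ and changing it on the ``target'' side $H'\rightsquigarrow\widetilde{H}(A)$ multiplies it by $\#(H'/\widetilde{H}(A))$; since $\psi_{0}$ intertwines these two and the form on $G$ is the ``quotient'' form, one gets
\[
|\omega^{\mathrm{ld}}| = \frac{\#(H'/\widetilde{H}(A))}{\#(M'/\widetilde{T}(A))}\cdot|\omega^{\mathrm{can}}| = q^{N}\cdot|\omega^{\mathrm{can}}|.
\]
Integrating both sides over $\mathrm{Aut}_A(L,h)$ then gives $\beta_L = q^{N}\int_{\mathrm{Aut}_A(L,h)}|\omega^{\mathrm{can}}|$. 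It remains to identify $\int_{\mathrm{Aut}_A(L,h)}|\omega^{\mathrm{can}}|$ with $\frac{1}{[G:G^o]}q^{-\dim G}\#\widetilde{G}(\kappa)$: the smooth model $\underline{G}$ has $\underline{G}(A^{\mathrm{sh}})=\mathrm{Aut}(L,h)(A^{\mathrm{sh}})$, so $\#\widetilde{G}(\kappa)$ counts all $\bar{\kappa}$-components meeting an $A$-point, but $\mathrm{Aut}_A(L,h)$ sits inside the identity component over $F$ up to the index $[G:G^o]$; reconciling the measure of the full smooth model with the measure of the open-and-closed piece cut out by the honest automorphism group introduces exactly the factor $[G:G^o]^{-1}$. (When $G$ is connected, e.g.\ unitary or symplectic type, this factor is $1$; the orthogonal case is where it genuinely enters.)

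The main obstacle I anticipate is the last point: carefully justifying the component-group factor $[G:G^o]$. One must check that the special fiber $\widetilde{G}$ of the smooth model is the reduction of all of $\underline{G}$ (not just the neutral component), that the Haar measure normalizations on $\mathrm{Aut}_A(L,h)$ match those implicit in Gan--Yu's definition of $\beta_L$, and that the index relating $\#\underline{G}(A/\pi^N A)$ for large $N$ to $q^{N\dim G}\cdot(\text{mass})$ is precisely $[G:G^o]$ rather than some more subtle quantity (this requires knowing that $\underline{G}\to\mathrm{Spec}\,A$ has geometrically the same number of connected components in the special and generic fibres, which follows from smoothness plus properness-free component-counting, but should be spelled out). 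The comparison of volume forms itself (the $q^N$ factor) is essentially linear algebra over the DVR once the smoothness of $\rho$ is in hand, so I would present that quickly and concentrate the argument on the identification of $\int|\omega^{\mathrm{can}}|$ with the stated expression, likely by citing the relevant statements of Gan--Yu (their Section 3, especially the discussion around Proposition 3.7 and the formula $\int|\omega^{\mathrm{can}}|=q^{-\dim G}\#\underline{G}(A/\pi A)$) and then tracking the component group through.
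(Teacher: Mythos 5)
Your overall framework---comparing the two volume forms $\omega^{\mathrm{ld}}$ and $\omega^{\mathrm{can}}$ to produce the factor $q^N$, then invoking the Weil-type formula $\int_{\underline{G}(A)}|\omega^{\mathrm{can}}|=q^{-\dim G}\#\tilde{G}(\kappa)$---is exactly the argument the paper has in mind (it states Theorem 4.3 without a written proof and refers to the framework laid out in its introduction and in Section~3 of Gan--Yu). Your derivation of $q^N = \#(H'/\widetilde{H}(A))\,/\,\#(M'/\widetilde{T}(A))$ from changing reference lattices $M'\rightsquigarrow\widetilde{T}(A)$ and $H'\rightsquigarrow\widetilde{H}(A)$, using smoothness of $\rho:\underline{M}^{\ast}\to\underline{H}$ to construct the quotient form, is correct and is the real content.

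The genuine gap is in where the factor $[G:G^o]^{-1}$ enters. You begin with $\beta_L=\int_{\mathrm{Aut}_A(L,h)}|\omega^{\mathrm{ld}}|$ and then try to produce the missing $[G:G^o]^{-1}$ out of the last equality, asserting that ``$\mathrm{Aut}_A(L,h)$ sits inside the identity component over $F$ up to the index $[G:G^o]$,'' so that reconciling measures gives $\int|\omega^{\mathrm{can}}|=\frac{1}{[G:G^o]}q^{-\dim G}\#\tilde{G}(\kappa)$. This is not correct. By Theorem~3.21 one has $\mathrm{Aut}_A(L,h)=\underline{G}(A)$, the \emph{full} group of $A$-points of the smooth model, and since $\underline{G}$ is smooth the reduction map $\underline{G}(A)\to\tilde{G}(\kappa)$ is surjective onto all $\kappa$-components; the standard smooth-scheme volume formula therefore reads $\int_{\underline{G}(A)}|\omega^{\mathrm{can}}|=q^{-\dim G}\#\tilde{G}(\kappa)$ with no component-group correction. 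The factor $[G:G^o]^{-1}$ is instead part of the \emph{definition} of $\beta_L$ adopted in the paper: as stated in the introduction (for the quadratic case, $\beta_L=\tfrac12\int_{\mathrm{Aut}_A(L,h)}|\omega^{\mathrm{ld}}|$, with $\tfrac12=[G:G^o]^{-1}$) and confirmed by Remark~5.8, which says explicitly that the definition used here divides the Conway--Sloane local density by the number of connected components of the orthogonal group. So the correct chain is $\beta_L=\frac{1}{[G:G^o]}\int|\omega^{\mathrm{ld}}|=\frac{1}{[G:G^o]}q^N\int|\omega^{\mathrm{can}}|=\frac{1}{[G:G^o]}q^N q^{-\dim G}\#\tilde{G}(\kappa)$, with the normalization appearing at the first step and the Weil formula applied cleanly at the last. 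Your attempt to rederive the normalization from the integration is both unnecessary and, as stated, false, and would also be the first thing a referee flags since it misreports the content of $\underline{G}(A)$.
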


Based on Theorem 4.3, once one constructs the smooth integral model $\underline{G}$ for a given lattice $(L, h)$,
the only real challenge to compute the local density is  $\#\tilde{G}(\kappa)$
and a recipe for computing $\#\tilde{G}(\kappa)$ is explained in the next section.

\section{Examples and a recipe for computing local densities}
In this section, in order to illustrate how effective and useful the recipe for computing local densities explained in this paper is,
we compute local densities of unimodular (i.e. $L^{\#}=L$) quadratic lattices with odd rank.
Then we explain a recipe for computing  local densities in the general case, based on the argument used in  the above example.

Let $A$ be a finite extension of $\mathbb{Z}_2$ and  $\pi$ be a uniformizing element of $A$.
Let $e=2e'$ or $e=2e'-1>1$ (we assume this for simplicity) be the ramification index with $(\pi)^e=(2)$ and  let $(L, h)$ be a quadratic lattice
(i.e. $K=F$ and $\epsilon=1$).
We use the symbol $A(a, b)$ to denote the $A$-lattice $A\cdot e_1+A\cdot e_2$ with the symmetric bilinear form having Gram matrix
$\begin{pmatrix} a&1\\ 1&b \end{pmatrix}$.
We denote by $(t)$ the  $A$-lattice of rank 1 equipped with the symmetric bilinear form having Gram matrix $(t)$.

Suppose that
$$L=\bigoplus_i A_i(0,0) \oplus A(\pi^s,r\pi^{2e-s})\oplus (t)$$
 of rank $2m+3$ with $r\in A$ 
and $t\equiv 1  \ \ \mathrm{mod}\ \ \pi$. Here, $s\leq e$ and $s$ is odd if $s<e$, and  $A_i(0,0)=A(0,0)$.
By Example 93:18 of \cite{O2}, $L$ as above exhausts all unimodular quadratic lattices with odd rank $>1$.
\subsection{Constructions of $\widetilde{T}$ and $\widetilde{H}$}
We first construct the smooth integral model $\underline{G}$ associated to $L$.
As mentioned in Section 4.2, the construction of $\underline{G}$ is based on the explicit description of $\widetilde{T}(A)$.
Basically, the procedure to find  $\widetilde{T}(A)$ gives certain congruence conditions on the entries of the matrix $\widetilde{T}(A)$.
We state matrix forms of $\widetilde{T}(A)$ and $\widetilde{H}(A)$ below.

\[
\widetilde{T}(A)=\left\{
  \begin{array}{l l}
 \begin{pmatrix} x_0&y_0&\pi^{[(e-s)/2]}z_0&\pi^{e'}w_0\\\pi^{[(e-s)/2]}x_1&\pi^{[(e-s)/2]}y_1&\pi^{e-s}z_1&\pi^{[e-(s-1)/2]}w_1\\
 x_2&y_2&\pi^{[(e-s)/2]}z_2&\pi^{e'}w_2\\\pi^{e'}x_3&\pi^{e'}y_3&\pi^{[e-(s-1)/2]}z_3&2w_3 \end{pmatrix} & \quad \textit{if $s<e$ odd};\\
 \begin{pmatrix} x&\pi^{e'}y \\ \pi^{e'}z& 2w  \end{pmatrix} & \quad \textit{if $s=e$}.
     \end{array} \right.
\]
Here, $x_0$ is a $(2m \times 2m)$-matrix, etc., and $x$ is a $(2m+2) \times (2m+2)$-matrix, etc., and
$[(e-s)/2]$ is the least integer greater than or equal to $(e-s)/2$.
Then a matrix form of $\underline{M}(A)$ is that of $\widetilde{T}(A)$ after replacing $\pi^{[(e-s)/2]}y_1$ by $1+\pi^{[(e-s)/2]}y_1$,
$\pi^{[(e-s)/2]}z_2$ by $1+\pi^{[(e-s)/2]}z_2$, and $2w_3$ by $1+2w_3$.

\[
\widetilde{H}(A)=\left\{
  \begin{array}{l l}
 \begin{pmatrix} a_0&b_0&\pi^{[(e-s)/2]}c_0&\pi^{e'}d_0\\{}^tb_0&2b_1&\pi^{[(e-s)/2]}c_1&\pi^{e'}d_1\\
 {}^t(\pi^{[(e-s)/2]}c_0)&{}^t(\pi^{[(e-s)/2]}c_1)&\pi^{2e-s}c_2&\pi^{[e-(s-1)/2]}d_2\\
 {}^t(\pi^{e'}d_0)&{}^t(\pi^{e'}d_1)&{}^t(\pi^{[e-(s-1)/2]}d_2)&4d_3 \end{pmatrix} & \quad \textit{if $s<e$ odd};\\
 \begin{pmatrix} a&\pi^{e'}b \\ {}^t(\pi^{e'}b)& 4c  \end{pmatrix} & \quad \textit{if $s=e$}.
     \end{array} \right.
\]
Here, the diagonal entries of $a_0$ are divisible by $2$, where $a_0$ is a $(2m \times 2m)$-matrix, etc., and
the diagonal entries of $a$ are divisible by $2$, where $a$ is a  $(2m+2) \times (2m+2)$-matrix, etc.
And ${}^tb_0$ is the matrix transpose of $b_0$.

Based on the above matrices, we can compute the $q^N$ factor in Theorem 4.3.
\begin{Lem}
Let $q^{N_1}=\#(M'/\widetilde{T}(A))$ and $q^{N_2}=\#(H'/\widetilde{H}(A))$ so that $q^N=q^{N_2-N_1}$.
Then we have the following:

$
\left\{  \begin{array}{l l}
N_1=2(2m+1)(e'+[(e-s)/2])+2[e-(s-1)/2]+2e-s & \quad \textit{ }\\
N_2=(2m+1)(e'+[(e-s)/2])+[e-(s-1)/2]+(2m+5)e-s & \quad \textit{if $s<e$ odd;}\\
  N=N_2-N_1=(2m+1)(e-e'-[(e-s)/2])-[e-(s-1)/2]+2e & \quad \textit{ }  \end{array} \right.
$

$
\left\{  \begin{array}{l l}
N_1=2(2m+2)e'+e & \quad \textit{ }\\
N_2=(2m+2)(e+e')+2e & \quad \textit{                  if $s=e$.}\\
  N=N_2-N_1=(2m+2)(e-e')+e & \quad \textit{ }   \end{array} \right.
$
\end{Lem}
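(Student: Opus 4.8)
The strategy is purely a bookkeeping computation with the explicit matrix models for $\widetilde{T}(A)$ and $\widetilde{H}(A)$ displayed above, so the main task is to organize the count carefully. First I would fix the ambient free $A$-modules: $M'=\mathrm{End}_B(L)$ is a free $A$-module of rank $(2m+3)^2$ (entrywise, with no constraints), and $H'$ is the module of symmetric bilinear forms on $L$, free of rank $\binom{2m+3}{2}+(2m+3)=\binom{2m+4}{2}$ in the quadratic case (the diagonal-divisibility-by-$2$ constraint is already part of the definition of $H'$ as a quadratic-form lattice, matching the divisibility appearing in $\widetilde{H}(A)$). For each entry or block of the displayed matrix I then read off the power of $\pi$ that multiplies the free parameter, and the length of the corresponding quotient $A/\pi^k A$ is exactly that exponent $k$. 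Summing these exponents over all entries gives $\mathrm{length}_A(M'/\widetilde{T}(A))=N_1$ and $\mathrm{length}_A(H'/\widetilde{H}(A))=N_2$, and since $\#(A/\pi^k A)=q^k$ we get the stated $q^{N_1}$, $q^{N_2}$, hence $q^N=q^{N_2-N_1}$ by definition of $N$ in Theorem 4.3.

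Concretely, in the case $s<e$ odd, I would split the $(2m+3)\times(2m+3)$ matrix of $\widetilde{T}(A)$ into the $4\times 4$ block pattern shown (blocks of sizes $2m,1,1,1$) and tabulate the $\pi$-exponent of each block: the first block-column contributes $0$ from $x_0,x_2$ and $e'$ from the rows labeled $x_1$ (exponent $[(e-s)/2]$) and $x_3$ (exponent $e'$), etc. Multiplying each block's exponent by the number of scalar entries it contains (e.g.\ a $2m\times 1$ block contributes $2m$ copies), and adding everything up, yields the claimed formula $N_1=2(2m+1)(e'+[(e-s)/2])+2[e-(s-1)/2]+2e-s$; I would present this as a short table rather than prose. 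The same procedure applied to $\widetilde{H}(A)$, now remembering that $\widetilde{H}(A)$ sits inside $H'$ (so the reference exponent for the $a_0$-block is $0$, for the $2b_1$ entry is $1$ since $H'$ already allows that diagonal entry to be any element of $2A$... — more precisely the diagonal entries of $a_0$ and the entry $2b_1$, $4d_3$ must be compared against the corresponding entries of $H'$), gives $N_2$. The case $s=e$ is the analogous but much shorter computation with the $2\times 2$ block matrix of sizes $(2m+2,1)$, and I would just state the resulting $N_1=2(2m+2)e'+e$ and $N_2=(2m+2)(e+e')+2e$.

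The one genuine subtlety — and the step I expect to require the most care — is the correct normalization of the diagonal entries in the quadratic (as opposed to symmetric-bilinear) setting: because $(K,\epsilon,p)=(F,1,2)$, the ambient lattice $H'$ of ``hermitian forms'' really parametrizes quadratic forms, so the free parameter on a diagonal position of $H'$ already ranges over $2A$, not $A$, and likewise the divisibility-by-$2$ built into the diagonal of $a_0$ in $\widetilde{H}(A)$ and the factors $2b_1$, $4d_3$, $4c$ must be measured relative to that. Getting the off-diagonal-versus-diagonal counting consistent between $H'$ and $\widetilde{H}(A)$ is where an error would most plausibly creep in; everything else is a mechanical sum. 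Once $N_1$ and $N_2$ are in hand, the final line $N=N_2-N_1$ in each case is immediate algebra, and the lemma is proved.
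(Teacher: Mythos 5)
Your overall plan --- computing $\#(M'/\widetilde{T}(A))$ and $\#(H'/\widetilde{H}(A))$ as $q$ raised to the sum of entry-by-entry $\pi$-valuations of the cofactors, organized via the displayed block matrices --- is the right one; the paper gives no proof for Lemma~5.2 and clearly has exactly this bookkeeping in mind. Your $N_1$ count against $M'$ (every entry free in $A$) is straightforward and reproduces the stated formula. The difficulty you flag is real, but the normalization you tentatively adopt for $H'$ is the wrong one, and it would change the answer. You propose that ``the free parameter on a diagonal position of $H'$ already ranges over $2A$, not $A$,'' so that the divisibility-by-$2$ of the $a_0$-diagonal and the $2b_1$ entry contribute nothing. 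In fact, in this paper's quadratic convention one has $h(v,v) = \tfrac{1}{2}\bigl(h(2v)-2h(v)\bigr) = h(v)$, so the diagonal Gram-matrix entry \emph{is} the quadratic-form value $h(e_i)$, which for an arbitrary $f \in H' = \{f : f\ \text{hermitian form on}\ L\}$ is a free element of $A$. Thus the reference lattice $H'$ must be taken with every Gram entry --- diagonal and off-diagonal --- free in $A$, just as $M'$ is.

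Concretely, with your proposed $2A$ normalization the count in the case $s<e$ odd gives
\[
N_2^{\mathrm{wrong}} = (2m+1)\bigl(e' + [(e-s)/2]\bigr) + [e-(s-1)/2] + 2e - s,
\]
which undershoots the stated $N_2$ by exactly $(2m+3)e$. With the correct normalization, the diagonal of $a_0$ contributes $2m\cdot e$, the entry $2b_1$ contributes $e$, the entry $\pi^{2e-s}c_2$ contributes $2e-s$, and $4d_3$ contributes $2e$; adding the $[(e-s)/2]$, $e'$, and $[e-(s-1)/2]$ exponents from the off-diagonal blocks gives
\[
N_2 = (2m+1)\bigl(e' + [(e-s)/2]\bigr) + [e-(s-1)/2] + (2m+5)e - s,
\]
as claimed, and the $s=e$ case works analogously ($N_2 = (2m+2)(e+e')+2e$ only if the $2m+2$ diagonal entries of $a$ each contribute $e$ and $4c$ contributes $2e$). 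So the mechanics of your argument are sound; the one genuine gap is the mis-calibration of $H'$, which must be fixed before the tabulation reproduces the lemma.
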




\subsection{The special fiber $\tilde{G}$}
We define two sublattices $B(L), Z(L)$ of $L$ as follows:
\begin{itemize}
\item[(1)] $B(L)$, the sublattice of $L$ such that $B(L)/2L$ is the kernel of the additive polynomial $h$ mod 2 on $L/2L$.
\item[(2)] $Z(L)$,  the sublattice of $B(L)$ such that $Z(L)/\pi B(L)$ is the kernel of the quadratic form $\frac{1}{2}h$ mod $\pi$ on $B(L)/\pi B(L)$.
\end{itemize}

Let $\bar{V}=B(L)/Z(L)$ be a $\kappa$-vector space and $\bar{h}$ denote the nonsingular quadratic form $\frac{1}{2}h$ mod $\pi$ on $\bar{V}$.
It is  obvious that each element of $\underline{G}(A)$ fixes $\bar{h}$.
Since $\underline{G}$ is smooth, the map $\underline{G}(A)\rightarrow \tilde{G}(\kappa)$ is surjective by \textit{Hensel's lemma}.
Now, we choose an element $g\in \tilde{G}(\kappa)$ and its lifting $\tilde{g} \in \underline{G}(A)$.
Since $\tilde{g}$ induces an element of $\mathrm{O}(\bar{V}, \bar{h})(\kappa)$,
 we have a group homomorphism $\varphi_{\kappa}$ from $\tilde{G}(\kappa)$ to $\mathrm{O}(\bar{V}, \bar{h})(\kappa)$.
 It is easy to see that this map is well-defined, i.e. independent of a lifting $\tilde{g}$ of $g$.


\begin{Thm}
The group homomorphism $\varphi_{\kappa}$ defined by
 $$\varphi_{\kappa} : \tilde{G}(\kappa) ~ \longrightarrow  ~ \mathrm{O}(\bar{V}, \bar{h})(\kappa)$$
 is surjective.
$\mathrm{Ker~}\varphi_{\kappa} $ is isomorphic to $ (\textbf{A}^{l}\times (\mathbb{Z}/2\mathbb{Z})^{\beta})(\kappa)$ as sets,
 where $\textbf{A}^{l}$ is an affine space of dimension $l$.
 Here,
$l=\frac{(2m+3)(2m+2)}{2}-\textit{dimension of }\mathrm{O}(\bar{V}, \bar{h})$, and
$\beta=2$ (resp. $\beta=1$) if $s<e$ is odd   (resp. if $s=e$).
Therefore,
 $$ \# \tilde{G}(\kappa)= \#(\mathrm{O}(\bar{V}, \bar{h})(\kappa))\cdot q^l\cdot 2^{\beta}.$$
 Here $q$ is the cardinality of $\kappa$.
\end{Thm}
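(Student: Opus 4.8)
The strategy is to make the surjectivity and kernel computation completely explicit using the matrix forms of $\widetilde{T}(A)$ and $\underline{M}(A)$ displayed above, together with the structure of the sublattices $B(L)$ and $Z(L)$. The first step is to identify $B(L)$ and $Z(L)$ concretely in terms of the decomposition $L=\bigoplus_i A_i(0,0)\oplus A(\pi^s, r\pi^{2e-s})\oplus (t)$: since $h$ mod $2$ is additive on $L/2L$ and vanishes on the hyperbolic planes $A_i(0,0)$, one sees that $B(L)$ is spanned mod $2L$ by those hyperbolic summands together with $\pi^{[(e-s)/2]}$ times the relevant vectors of $A(\pi^s, r\pi^{2e-s})$ and an appropriate power of $\pi$ times the generator of $(t)$; then $Z(L)$ is cut out inside $B(L)$ by the vanishing of $\tfrac12 h$ mod $\pi$. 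This yields an explicit $\kappa$-basis of $\bar V = B(L)/Z(L)$ and identifies $(\bar V,\bar h)$ as (roughly) $m$ hyperbolic planes plus a one-dimensional form, so that $\dim \mathrm{O}(\bar V,\bar h)$ is known.

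Second, I would make $\varphi_\kappa$ explicit on the level of matrices. Reducing an element $1+X\in\underline{M}(A)$ (with $X$ in the displayed form for $\widetilde T(A)$) modulo $\pi$ and restricting to $B(L)/Z(L)$ picks out precisely the reductions of the ``unscaled'' blocks — essentially $x_0$ together with the entries $y_1, z_2, w_3$ reduced modulo $\pi$ — so $\varphi_\kappa$ becomes the map sending $1+X$ to its action on $\bar V$. Surjectivity then follows because $\underline{G}$ is smooth (so $\underline G(A)\twoheadrightarrow \tilde G(\kappa)$ by Hensel, as already noted) and because every element of $\mathrm{O}(\bar V,\bar h)(\kappa)$ can be lifted: one lifts each elementary generator of the orthogonal group (reflections, or the generators used in the Cartan–Dieudonné description) to an element of $\underline G(A)$ using the explicit block description, checking in each case that the required congruences in the $\widetilde T(A)$ matrix are met and that the lift preserves $h$ exactly — this is where one invokes the defining property $\underline G(R)=\mathrm{Aut}_{B\otimes R}(L\otimes R,h\otimes R)$ for étale $R$ and the smoothness to fix up the lift.

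Third, for the kernel: an element of $\mathrm{Ker}\,\varphi_\kappa$ is (the reduction of) some $1+X\in\underline G(A)$ whose action on $\bar V$ is trivial, which forces the reductions mod $\pi$ of the ``unscaled'' blocks to be trivial (i.e. $x_0\equiv 1$, $y_1\equiv z_2\equiv w_3\equiv 0$, and the off-diagonal reductions that land in $\bar V$ vanish). Counting the remaining free parameters — the entries of $X$ that are either already $\pi$-divisible in $\widetilde T(A)$ or whose mod-$\pi$ reduction does not affect $\bar V$ — gives an affine space of dimension $l=\tfrac{(2m+3)(2m+2)}{2}-\dim\mathrm{O}(\bar V,\bar h)$, which matches $\dim\tilde G-\dim\mathrm{O}(\bar V,\bar h)$ since $\dim\tilde G = n(n-1)/2$ with $n=2m+3$ by flatness. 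The extra factor $(\mathbb{Z}/2\mathbb{Z})^\beta$ arises from discrete invariants not seen by $\bar V$: for $s<e$ odd, two independent ``spinor-type'' or determinant-type $\pm 1$ choices survive in the blocks scaled by the highest powers of $\pi$ (corresponding to $2w_3$ near $1$ and one more binary choice in the $z,w$ rows), while for $s=e$ only one such choice remains. Establishing that these discrete contributions are exactly $(\mathbb{Z}/2\mathbb{Z})^\beta$ — i.e. pinning down $\beta$ and showing the set-theoretic splitting $\mathrm{Ker}\,\varphi_\kappa \cong (\mathbf{A}^l\times(\mathbb{Z}/2\mathbb{Z})^\beta)(\kappa)$ — is the delicate part, since it requires carefully analyzing which sign conditions imposed by $h\circ(1+X)=h$ are independent modulo the group law; I expect this to be the main obstacle, and it is handled by writing out the hermitian-form equation block by block and solving the resulting quadratic congruences over $A$. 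The final count $\#\tilde G(\kappa)=\#\mathrm{O}(\bar V,\bar h)(\kappa)\cdot q^l\cdot 2^\beta$ is then immediate from the short exact sequence $1\to\mathrm{Ker}\,\varphi_\kappa\to\tilde G(\kappa)\to\mathrm{O}(\bar V,\bar h)(\kappa)\to 1$.
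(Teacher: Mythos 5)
Your overall strategy -- use the displayed block form of $\widetilde T(A)$, reduce the formal matrix of an element of $\tilde G(\kappa)$, and read everything off from the block-by-block equation ${}^t g\,\delta\,g=\delta$ -- is the same as the paper's. But there are two concrete problems.

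First, your description of $\mathrm{Ker}\,\varphi_\kappa$ is wrong, and the error matters. You write that being in the kernel ``forces the reductions mod $\pi$ of the unscaled blocks to be trivial (i.e.\ $x_0\equiv 1$, $y_1\equiv z_2\equiv w_3\equiv 0$ \dots).'' But $\varphi_\kappa$ only records $x_0$ and a single off-diagonal block ($x_1$ or $x_3$ when $s<e$, or $x$ and possibly $z$ when $s=e$); the blocks $y_1$, $z_1$, $z_2$, $w_3$ are invisible to $\varphi_\kappa$. Being in the kernel therefore imposes only $x_0=1$ and one off-diagonal block $=0$. The remaining blocks are constrained \emph{solely} by ${}^t g\,\delta\,g=\delta$, and when you work out those block equations (as the paper does) you get, among linear relations, the quadratic ones $z_1^2+z_1=0$ and $w_3^2+w_3=0$ (for $s<e$; just $w^2+w=0$ for $s=e$). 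These give $z_1,w_3\in\{0,1\}$, which is precisely where $(\mathbb Z/2\mathbb Z)^\beta$ comes from. Your later paragraph gestures at ``two independent $\pm1$ choices in the $z,w$ rows,'' which is the right intuition, but it directly contradicts the earlier claim that $w_3\equiv 0$; as written the two halves of your kernel argument are inconsistent, and the step that actually produces $\beta$ is exactly the one you defer as ``the delicate part.''

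Second, for surjectivity you propose lifting Cartan--Dieudonn\'e generators of $\mathrm O(\bar V,\bar h)(\kappa)$ to $\underline G(A)$. This is a genuinely different route from the paper, which instead exhibits a subgroup $H\subset\tilde G(\kappa)$ by zeroing out most blocks and checks that the remaining block equations make $\varphi_\kappa|_H$ surjective, all at the level of $\kappa$. Your route can be made to work, but you should be aware that $\kappa$ has characteristic $2$, where the Cartan--Dieudonn\'e theorem for orthogonal groups has exceptional cases (e.g.\ a four-dimensional hyperbolic space over $\mathbb F_2$), and lifting each generator to $\underline G(A)$ while satisfying the $\widetilde T(A)$ congruences and preserving $h$ exactly is not automatic; it requires a separate verification for each generator type. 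The paper's ``find a subgroup $H$ and check its defining equations'' approach sidesteps both issues and is essentially formal once $\widetilde T(A)$ is written down, so it is the shorter path.
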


\begin{proof}
We  represent the given quadratic form $h$ by a symmetric matrix  $\delta=\begin{pmatrix}  \delta^{\prime}&0\\0&\delta^{\prime\prime}  \end{pmatrix}$,
 where $\delta^{\prime}$ is  $(2m \times 2m)$-matrix (resp.  $(2m+2) \times (2m+2)$-matrix)
 if $s<e$ (resp. $s=e$).

Let $\tilde{M}$ be the special fiber of $\underline{M}^{\ast}$ and choose an element $g\in \tilde{G}(\kappa)$.
Recall that a formal matrix form of $g$, as an element of $\tilde{M}(\kappa)$, is
\[
g=\left\{
  \begin{array}{l l}
 \begin{pmatrix} x_0&y_0&\pi^{[(e-s)/2]}z_0&\pi^{e'}w_0\\\pi^{[(e-s)/2]}x_1&1+\pi^{[(e-s)/2]}y_1&\pi^{e-s}z_1&\pi^{[e-(s-1)/2]}w_1\\
 x_2&y_2&1+\pi^{[(e-s)/2]}z_2&\pi^{e'}w_2\\\pi^{e'}x_3&\pi^{e'}y_3&\pi^{[e-(s-1)/2]}z_3&1+2w_3 \end{pmatrix} & \quad \textit{if $s<e$ odd};\\
 \begin{pmatrix} x&\pi^{e'}y \\ \pi^{e'}z& 1+2w  \end{pmatrix} & \quad \textit{if $s=e$}.
     \end{array} \right.
\]
We note that the above matrix description is valid for elements of $\underline{G}(A)$, as elements of $\underline{M}(A)$
where $\underline{M}(A)$ is defined in Section 3.16.
However, as mentioned in Section 5.3 of \cite{GY}, we formally use this matrix description for $g\in \tilde{G}(\kappa)$.
To multiply $g$ and $g'$ for $g, ~g'\in \tilde{G}(\kappa)$, we refer to the description of Section 5.3 of \cite{GY}.
Then under the morphism $\varphi_{\kappa}$, $g$ maps to
\[\left\{
  \begin{array}{l l}
 \begin{pmatrix} x_0&0\\x_1 \textit{ (resp. $x_3$)}&1 \end{pmatrix} & \quad \textit{if $s<e$ odd and $e=2e'-1$ (resp. $e=2e'$)};\\
 \begin{pmatrix} x \end{pmatrix} ~ \left(\textit{resp. }\begin{pmatrix} x&0 \\ z&1\end{pmatrix}\right) &  \quad \textit{if $s=e$  and $e=2e'-1$ (resp. $e=2e'$)}.
     \end{array} \right.
\]
We define the subgroup $H$ of $\tilde{G}(\kappa)$ by setting as follows:
\[
\left\{
  \begin{array}{l l}
y_0=w_0=0, x_2=x_3=0, y_1=w_1=y_2=z_2=w_2=y_3=z_3=w_3=0  & \quad \textit{$e=2e'-1$ and $s<e$ odd};\\
  y=0, z=0, w=0 & \quad \textit{$e=2e'-1$ and $s=e$};\\
y_0=z_0=0, x_1=x_2=0, y_1=z_1=w_1=y_2=z_2=w_2=y_3=z_3=0  & \quad \textit{$e=2e'$ and $s<e$ odd};\\
\textit{no contribution}  & \quad \textit{$e=2e'$ and $s=e$}.
     \end{array} \right.
\]
Then by observing equations defining $H$, we can easily show that the restriction map $\varphi_{\kappa}|_{H}$ is surjective,
which induces the surjectivity of  $\varphi_{\kappa}$.
Note that equations defining $H$, as a subgroup of $\tilde{M}(\kappa)$,
can be obtained by observing each block of the formal matrix equation ${}^t g \delta g=\delta$, where $g \left(\in \tilde{M}(\kappa)\right)$ is as above setting.
  Here,  the sum and the multiplication in the formal matrix equation are to be interpreted as in Section 5.3 of \cite{GY}.

In addition,  $\mathrm{Ker~}\varphi_{\kappa}$ is obtained, as a subgroup of  $\tilde{G}(\kappa)$, by setting:
\[
\left\{
  \begin{array}{l l}
x_0=id, x_1=0  & \quad \textit{$e=2e'-1$ and $s<e$ odd};\\
  x=id & \quad \textit{$e=2e'-1$ and $s=e$};\\
x_0=id, x_3=0   & \quad \textit{$e=2e'$ and $s<e$ odd};\\
x=id, z=0  & \quad \textit{$e=2e'$ and $s=e$}.
     \end{array} \right.
\]

  We now state the equations defining $\mathrm{Ker~}\varphi_{\kappa}$, as a subgroup of $\tilde{M}(\kappa)$, by observing each block of the formal matrix equation ${}^t g \delta g=\delta$  as follows.
  Again,  the sum and the multiplication in the formal matrix equation are to be interpreted as in Section 5.3 of \cite{GY}.

If $e=2e'-1$ and $s<e$ is odd,
\begin{align*}
 \delta'y_0+{}^tx_2=0, z_0=0, \delta^{\prime}w_0+{}^tx_3=0, {}^ty_0\delta^{\prime}y_0/2+y_1^2+y_2=0, z_2+{}^ty_1=0, \\
 {}^ty_0\delta^{\prime}w_0+w_2+{}^ty_3=0, z_1^2+z_1=0, w_1+z_3=0, w_3^2+w_3=0.
\end{align*}

If $e=2e'-1$ and $s=e$,
\begin{align*}
\delta^{\prime}y+{}^tz=0, w^2+w=0. \end{align*}

If $e=2e'$ and $s<e$ is odd,

\begin{align*}
  \delta'y_0+{}^tx_2=0, \delta'z_0+{}^tx_1=0, w_0=0, {}^ty_0\delta^{\prime}y_0/2+y_2+\pi^{e}/2y_3^2=0, \\
{}^ty_0\delta'z_0+z_2+{}^ty_1=0, w_2+{}^ty_3=0, z_1^2+z_1=0, w_1+z_3=0, w_3^2+w_3=0.
\end{align*}

If $e=2e'$ and $s=e$,
\begin{align*}
y=0, w^2+w=0.
\end{align*}

The claim  regarding $\mathrm{Ker~}\varphi_{\kappa} $ directly follows by observing the above equations.
\end{proof}

\begin{Rmk}
 We describe Im $\varphi_{\kappa}$ as follows.
     \[
      \begin{array}{c|c}
       & \mathrm{Im~}  \varphi_{\kappa} \\
      \hline
      \textit{$e=2e'-1$ and $s<e$ odd}\ \   & \mathrm{O}(2m+1, \bar{h})(\kappa)=\mathrm{SO}(2m+1, \bar{h})(\kappa)\\
      \textit{$e=2e'-1$ and $s=e$}\ \   &\mathrm{O}(2m+2, \bar{h})(\kappa)\\
      \textit{$e=2e'$ and $s<e$ odd}\ \   &\mathrm{O}(2m+1, \bar{h})(\kappa)=\mathrm{SO}(2m+1, \bar{h})(\kappa)\\
      \textit{$e=2e'$ and $s=e$}\ \   &\mathrm{O}(2m+3, \bar{h})(\kappa)=\mathrm{SO}(2m+3, \bar{h})(\kappa)\\
            \end{array}
    \]
    \end{Rmk}
\begin{Thm}
By combining Lemma 5.2 and Theorem 5.4 with Theorem 4.3, the local density of ($L,h$) is
$$\beta_L=1/2 \cdot q^N \cdot q^{-\mathrm{dim~} G} \cdot \#\tilde{G}(\kappa)=
2^{\beta-1}\cdot q^{N-\mathrm{dim~} \mathrm{O}(\bar{V}, \bar{h})}\cdot \#(\mathrm{O}(\bar{V}, \bar{h})(\kappa)),$$
where
$$N=\left\{
  \begin{array}{l l}
  (2m+1)(e-e'-[(e-s)/2])-[e-(s-1)/2]+2e & \quad \textit{if $s<e$ odd};\\
  (2m+2)(e-e')+e & \quad \textit{if $s=e$}.
     \end{array} \right.$$
Here,  $\beta$ is defined in Theorem 5.4 and $\mathrm{O}(\bar{V}, \bar{h})(\kappa)$ is as explained in Remark 5.5.
And $q$ is the cardinality of $\kappa$.

\end{Thm}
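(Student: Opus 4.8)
The plan is to substitute the explicit data from Lemma 5.2 and Theorem 5.4 into the general formula of Theorem 4.3. First I would record that, since $V$ has odd dimension $n=2m+3$, the orthogonal group $G=\mathrm{O}(V,h)$ satisfies $G=G^o\times\{\pm\mathrm{id}_V\}$ with $G^o=\mathrm{SO}(V,h)$: indeed $-\mathrm{id}_V$ is central in $G$ and has determinant $(-1)^n=-1$, so it meets the non-identity component. Hence $[G:G^o]=2$, and Theorem 4.3 already yields the first displayed equality
$$\beta_L=\frac{1}{2}\,q^N\,q^{-\mathrm{dim~}G}\,\#\tilde{G}(\kappa).$$

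Next I would invoke the remark following Conjecture 5.10 that $\mathrm{dim~}G=n(n-1)/2=(2m+3)(2m+2)/2$, since $\underline{G}$ is flat over $A$, so its special and generic fibres have the same dimension. Comparing with the value $l=(2m+3)(2m+2)/2-\mathrm{dim~}\mathrm{O}(\bar{V},\bar{h})$ of Theorem 5.4 gives $l-\mathrm{dim~}G=-\mathrm{dim~}\mathrm{O}(\bar{V},\bar{h})$. Now substitute $\#\tilde{G}(\kappa)=\#(\mathrm{O}(\bar{V},\bar{h})(\kappa))\cdot q^l\cdot 2^{\beta}$ from Theorem 5.4 into the display above and collect the powers of $q$ and of $2$:
$$\beta_L=2^{\beta-1}\,q^{N+l-\mathrm{dim~}G}\,\#(\mathrm{O}(\bar{V},\bar{h})(\kappa))=2^{\beta-1}\,q^{N-\mathrm{dim~}\mathrm{O}(\bar{V},\bar{h})}\,\#(\mathrm{O}(\bar{V},\bar{h})(\kappa)),$$
which is the asserted formula.

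Finally I would read off $N$ directly from Lemma 5.2: $N=N_2-N_1$ equals $(2m+1)(e-e'-[(e-s)/2])-[e-(s-1)/2]+2e$ when $s<e$ is odd, and $(2m+2)(e-e')+e$ when $s=e$, which is exactly the case split in the statement; the identification of $\mathrm{O}(\bar{V},\bar{h})(\kappa)$ in each of the four sub-cases is the content of Remark 5.5, so nothing further is required. The argument is essentially bookkeeping, a direct corollary of Theorem 4.3, Lemma 5.2 and Theorem 5.4. The only steps demanding a little care are the computation $[G:G^o]=2$, valid because the rank $2m+3$ is odd, and tracking the exponent of $q$ through the cancellation $q^{-\mathrm{dim~}G}\cdot q^{l}=q^{-\mathrm{dim~}\mathrm{O}(\bar{V},\bar{h})}$ that comes from the definition of $l$; I do not anticipate any genuine obstacle.
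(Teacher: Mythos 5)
Your proof is correct and is exactly the bookkeeping the paper intends: Theorem 4.3 gives the first equality (with $[G:G^o]=2$), Theorem 5.4 supplies $\#\tilde G(\kappa)=\#\mathrm{O}(\bar V,\bar h)(\kappa)\cdot q^l\cdot 2^\beta$ with $l=\dim G-\dim\mathrm{O}(\bar V,\bar h)$, and Lemma 5.2 gives $N$; substitution yields the displayed formula. One tiny caveat on a side remark: $[G:G^o]=2$ holds for the orthogonal group of any nondegenerate form over a field of characteristic $\neq 2$, not only in odd rank; the odd-rank hypothesis is only needed for your specific witness $-\mathrm{id}_V$ of the nontrivial component, which is a fine but inessential choice.
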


\begin{Rmk}
\begin{itemize}
\item[(1)] As promised in the introduction,
we  use less scheme language in the above example. All computations can be done over finitely generated $A$-modules.

\item[(2)] Theorem 5.4 can be generalized to give an algebraic group structure of $\tilde{G}$ by working with $\kappa$-algebras' points.
Indeed, $\tilde{G}$ satisfies the short exact sequence
\[1 \longrightarrow \mathrm{Ker~}\varphi \longrightarrow \tilde{G} \overset{\varphi}\longrightarrow \mathrm{O}(\bar{V}, \bar{h})^{\mathrm{red}} \longrightarrow 1 \]
where $\mathrm{Ker~}\varphi \cong \textbf{A}^{l}\times (\mathbb{Z}/2\mathbb{Z})^{\beta}$ as varieties over $\kappa$
and $\mathrm{O}(\bar{V}, \bar{h})^{\mathrm{red}}$ is the reduced subgroup scheme of $\mathrm{O}(\bar{V}, \bar{h})$.
For a detailed or scheme-theoretic explanation of the construction of the morphism $\varphi$, we refer to Section 4.1 of \cite{C1}.
One can also construct a morphism from $\tilde{G}$ to $(\mathbb{Z}/2\mathbb{Z})^{\beta}$ as algebraic groups and the construction of this morphism
is similar to that of Sections 4.2 and 4.3 of \cite{C1}.
Thus, we obtain the following short sequence
\[1 \longrightarrow R_u\tilde{G} \longrightarrow \tilde{G} \longrightarrow \mathrm{O}(\bar{V}, \bar{h})^{\mathrm{red}}\times (\mathbb{Z}/2\mathbb{Z})^{\beta} \longrightarrow 1 \]
where the unipotent radical $R_u\tilde{G} $ is isomorphic to $\textbf{A}^{l}$ as varieties over $\kappa$.

However, we only need $\# \tilde{G}(\kappa)$ in order to compute the local density.
Thus, for our purpose it is enough to work with $\kappa$-points rather than an algebraic group structure.
\end{itemize}
\end{Rmk}
\begin{Rmk}
 We compare our formula with the formula of Conway-Sloane (\cite{CS}).
The definition of local densities used in this paper is slightly different from that of \cite{CS}.
In this paper, we divide the definition of \cite{CS} by $2$ in Theorem 4.3, where $2$ is the number of connected components of an orthogonal group.
In the following, we use the definition of local densities used in \cite{CS} in order to compare our formula with theirs.

It seems that it is  hard to identify the contributions that occur in our formula with the factors appearing in the Conway-Sloane result.
Let us explain this by observing the simplest example, namely a unimodular lattice $L$ with rank $1$ over $\mathbb{Z}_2$.

In this case, $$\widetilde{T}_1(A)=(2x) \textit{~and~} \widetilde{T}_2(A)=(4a)$$
and so the local density studied in this paper is $$2\cdot q^e = 2\cdot q ~(=4).$$
Here, $2=\# (\tilde{G}(\kappa))=\# (\mathbb{Z}/2\mathbb{Z})$  (i.e. $\beta=1$)
and $N=e=1$.

Based on \cite{CS}, there are three nontrivial Jordan components: $L=L_{-1}\oplus L_0 \oplus L_1$, where
$L_{-1}$ and $L_1$ are \textit{bound love forms of rank $0$} (section 6 of \cite{CS})
and $L_0$ is  \textit{free of type $I$ with rank $1$}.
Thus, \[\textit{the local density of $L$ studied in \cite{CS}=1/(the p-mass of L)=$2\cdot 2\cdot 1=4$}.\]
Here, the first two $2$'s are the reciprocal of the diagonal factors associated to $L_{-1}$ and $L_1$,
and $1$ is the reciprocal of the diagonal factor associated to
$L_0$. (Sections 5,6, and 12 of \cite{CS})

In order to match our formula with their formula, there should be correspondence between  $2\cdot q$ and $2\cdot 2\cdot 1$.
But we do not see which $2$ in their formula should correspond to $2$ in our formula (or correspond to $q$) since their two $2$'s come from \textit{bound love forms}.
In other words, we do not see how we can distinguish their two $2$'s.

For this reason, it is not clear to us how to generalize their formula to  unramified/ramified cases.
\end{Rmk}

\subsection{A recipe for computing local densities}
In this section, we explain a recipe for computing local densities.
Let $L=\bigoplus_{i\geq 0}L_i$ be a Jordan splitting for a quadratic lattice $(L, h)$.
We define three sublattices $A_i, B_i, Z_i$ of $L$ as follows:
\begin{itemize}
\item[(1)] $A_i$, $A_i=\{x\in L:h(x, L)\in \pi^iA\};$
\item[(2)] $B_i$, the sublattice of $L$ such that $B_i/2 A_i$ is the kernel of the additive polynomial $1/\pi^i \cdot h$ mod 2 on $A_i/2 A_i$;
\item[(3)] $Z_i$,  the sublattice of $B_i$ such that $Z_i/\pi B_i$ is the kernel of the quadratic form $1/2\cdot 1/\pi^i \cdot h$ mod $\pi$ on $B_i/\pi B_i$.
\end{itemize}
Let $\bar{V_i}=B_i/Z_i$ be a $\kappa$-vector space and $\bar{h_i}$ denote the nonsingular quadratic form $1/2\cdot 1/\pi^i \cdot h$ mod $\pi$ on $\bar{V_i}$.
Then, as the group homomorphism $\varphi_{\kappa}$ defined in Section 5.3, we have a group homomorphism
$\varphi_{i, \kappa} : \tilde{G}(\kappa) \rightarrow \mathrm{O}(\bar{V_i}, \bar{h_i})(\kappa).$
Let $$\varphi_{\kappa}=\prod_i \varphi_{i, \kappa} : \tilde{G}(\kappa) \rightarrow \prod_i \mathrm{O}(\bar{V_i}, \bar{h_i})(\kappa).$$


\begin{Conj}
We  expect  that
\[\textit{$\varphi_{\kappa}$ is surjective and its kernel is isomorphic to $(\textbf{A}^{l}\times (\mathbb{Z}/2\mathbb{Z})^{\beta})(\kappa)$ as sets,}\]
for $l=\textit{dimension of $\tilde{G}$ - $\sum_i$ (dimension of $\mathrm{O}(\bar{V_i}, \bar{h_i})$)} $ and for a certain non-negative integer $\beta$.
\end{Conj}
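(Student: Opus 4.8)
The plan is to prove the conjecture one lattice at a time, following the template established in Theorem 5.4 (and in \cite{C1}, \cite{C2}): produce an explicit formal matrix description of a general element of $\tilde{G}(\kappa)$ from the matrix form of $\widetilde{T}(A)$ obtained in Section 3, write down $\varphi_\kappa$ block-by-block on that matrix, exhibit a section, and read off the kernel from the defining equations. The construction of $\underline{G}$ in Section 3 is what makes this feasible: once $\widetilde{T}(A) = T^\alpha(A)$ is computed concretely (by running the stabilizing sequence $T^0(A)\supseteq T^1(A)\supseteq\cdots$ until it stops, which is expected to take at most $e'+1$ steps), an element of $\underline{M}(A)$—hence of $\underline{G}(A)$, hence formally of $\tilde G(\kappa)$—is a matrix $1+X$ with $X$ subject to explicit congruence conditions on its entries, and these conditions are organized according to the valuations $i$ appearing in the Jordan splitting $L=\bigoplus_i L_i$.

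The key steps, in order, would be: (i) From the Jordan splitting $L=\bigoplus_i L_i$ and the dual lattice $L^\#=\bigoplus_i \pi_K^{-i}L_i$, compute $\widetilde{T}(A)$ explicitly as a matrix with prescribed divisibilities in each block, as in the displayed formulas of Section 5.1. (ii) Identify, for each $i$, the sublattices $A_i, B_i, Z_i$ and the quadratic space $(\bar V_i,\bar h_i)$, and verify that each $g\in\tilde G(\kappa)$ does stabilize $\bar h_i$, so that $\varphi_{i,\kappa}$ and hence $\varphi_\kappa=\prod_i\varphi_{i,\kappa}$ is well-defined. (iii) Write $\varphi_{i,\kappa}(g)$ in terms of the blocks of the matrix $g$: the reduction mod $\pi$ of the blocks of $X$ sitting "at level $i$" should give exactly the matrix of an element of $\mathrm{O}(\bar V_i,\bar h_i)(\kappa)$. (iv) Construct an explicit subgroup $H\subseteq\tilde G(\kappa)$ (by setting many free parameters to zero, as in Theorem 5.4) such that $\varphi_\kappa|_H$ is already surjective; this gives surjectivity of $\varphi_\kappa$. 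Surjectivity onto the full product (not just onto each factor) is where one must be careful, since off-diagonal blocks linking different levels could in principle obstruct simultaneous lifting; the point is that the congruence structure of $\widetilde T(A)$ makes the cross-level blocks subleading, so the leading behaviour decouples across $i$. (v) Describe $\mathrm{Ker}\,\varphi_\kappa$ by setting the level-$i$ diagonal blocks to the identity (and the lower-triangular parts to zero in the appropriate cases), then expand the defining equation ${}^tg\,\delta\,g=\delta$ block by block; the resulting equations should fall into two families—affine-linear relations among the entries, giving the affine space $\mathbf A^l$ with $l=\dim\tilde G-\sum_i\dim\mathrm{O}(\bar V_i,\bar h_i)$, and a handful of equations of the shape $w^2+w=0$ (Artin–Schreier), each contributing a $\mathbb Z/2\mathbb Z$ and together accounting for $2^\beta$. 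The dimension count $l\ge 0$ is forced because $\dim\tilde G=\dim G = n(n-1)/2$ by flatness and $\varphi_\kappa$ factors through $\prod_i\mathrm{O}(\bar V_i,\bar h_i)$.

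The main obstacle I expect is controlling the \emph{cross-level} structure in full generality: in Theorem 5.4 there are only two or three levels and the interaction blocks are small, whereas for an arbitrary Jordan splitting $L=\bigoplus_{i\ge0}L_i$ one has to bookkeep the divisibility of every off-diagonal block $T^0$-entry and check that, modulo $\pi$, the map $\varphi_\kappa$ really is the product of the $\varphi_{i,\kappa}$ with no hidden relations coupling distinct $\mathrm{O}(\bar V_i,\bar h_i)$ factors, and that the counting of Artin–Schreier contributions $\beta$ is uniform. This is precisely the step that cannot be carried out "in one formula" because it hinges on the classification of quadratic (hermitian) lattices over ramified extensions of $\mathbb Z_2$ (\cite{J}, \cite{O1}, \cite{O2}); hence the conjecture is stated case-by-case rather than proved in general, and the role of the construction in Section 3 is to guarantee that, for any single given $(L,h)$, all the matrices above are explicitly writable so that steps (i)–(v) become a finite—if tedious—computation, exactly as illustrated in Theorem 5.4 and in Example 5.12.
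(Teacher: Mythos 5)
The statement is Conjecture 5.10, which the paper itself does not prove in general; the paper's ``proof'' consists only of the case-by-case recipe sketched in Remark 5.11 and verified in two instances (Theorem 5.4 and Example 5.12). Your proposal is a faithful restatement of exactly that recipe—compute $\widetilde T(A)$ explicitly, obtain a formal matrix form of an element of $\tilde G(\kappa)$ and of $\varphi_\kappa$, exhibit a subgroup $H$ on which $\varphi_\kappa$ restricts surjectively, and expand ${}^tg\,\delta\,g=\delta$ block-by-block to separate the affine-linear relations (giving $\mathbf A^l$) from the Artin--Schreier equations $w^2+w=0$ (giving $(\mathbb Z/2\mathbb Z)^\beta$)—and you correctly identify, as the paper does, that the obstruction to a uniform argument is the classification of lattices over ramified extensions of $\mathbb Z_2$, so the two approaches coincide.
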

Note that the dimension of $\tilde{G}$ is the same as the dimension of $G$ which is $n(n-1)/2$, where $G$ is the generic fiber of $\underline{G}$ and $n$ is the rank of $L$, since $\underline{G}$ is flat over $A$.
Once one verifies this  conjecture, we obtain that
$$\# \tilde{G}(\kappa)=\prod_i\#(\mathrm{O}(\bar{V_i}, \bar{h_i})(\kappa))\cdot q^l\cdot 2^{\beta}.$$
Here, $q$ is the cardinality of $\kappa$.
Then, combined with Theorem 4.3, one obtains the local density.

\begin{Rmk}
This conjecture is proved  when $A/\mathbb{Z}_2$ is unramified (\cite{C1}) or when $(L, h)$ is unimodular of odd rank  (Theorem 5.4)
and the proof is based on formal matrix interpretations of an element of $\tilde{G}(\kappa)$ and the group homomorphism $\varphi_{\kappa}$.
In the general case, the  classification of quadratic (or hermitian) lattices over a finite ramified extension of $\mathbb{Z}_2$ is a highly involved problem  (\cite{J}, \cite{O1}, \cite{O2}) and so it seems unlikely that  formal matrix forms of an element of $\tilde{G}(\kappa)$ and the group homomorphism $\varphi_{\kappa}$
can be written.
For this reason,  it seems unlikely that the conjecture is proved in the general case.

However,  we expect that one can still verify the conjecture case by case, and a possible framework to prove it is given below.
For a given specific lattice,  
one can write an explicit matrix form of an element of $\underline{G}(A)$ which allows one to
 write  formal matrix forms of an element of $\tilde{G}(\kappa)$ and the group homomorphism $\varphi_{\kappa}$ explicitly, as in the proof of Theorem 5.4.
Based on formal matrix interpretations of these, one can then prove surjectivity directly by observing equations defining $\tilde{G}(\kappa)$ and
a formal matrix interpretation of the group homomorphism $\varphi_{\kappa}$,
or  one may choose a certain suitable subgroup $H$ of $\tilde{G}(\kappa)$ as in Theorem 5.4 and Example 5.12
 such that the restriction of $\varphi_{\kappa}$ to $H$ is surjective.
Then, again based on a formal matrix interpretation of $\varphi_{\kappa}$,
one can enumerate all equations defining $\mathrm{Ker~}\varphi_{\kappa}$ and check that
$\mathrm{Ker~}\varphi_{\kappa}$ is isomorphic to $(\textbf{A}^{l}\times (\mathbb{Z}/2\mathbb{Z})^{\beta})(\kappa)$ as sets.
\end{Rmk}


Below, we give an example to compute the local density for a non-unimodular quadratic lattice by proving the conjecture.
\begin{Exa}
Assume that the ramification index $e$ is $2$  
 and $L=\left(\oplus_i A_i(0,0)\oplus (1)\right)\oplus \left(\oplus_j \pi A_j(0,0)\right) $ of rank $2m+1+2m'$.
Here, $\pi A_j(0,0)$ is a quadratic lattice with Gram matrix $\begin{pmatrix} 0&\pi\\ \pi&0 \end{pmatrix}$.
So  a Jordan splitting of $L$ is $L=L_0\oplus L_1$ and
we represent the given quadratic form $h$ by a symmetric matrix $\begin{pmatrix} \delta&0&0\\0&1&0\\0&0&\pi \delta' \end{pmatrix}$,
where $\delta$ is a $(2m \times 2m)$-matrix  and $\delta'$ is a  $(2m' \times 2m')$-matrix.
Then we have the following description for $\widetilde{T}(A)$ and $\widetilde{H}(A)$:
\[\widetilde{T}(A)=\begin{pmatrix}x_0&\pi y_0&\pi z_0\\\pi x_1&\pi^2 y_1&\pi^2 z_1\\x_2&\pi y_2&z_2\end{pmatrix}, ~~
\widetilde{H}(A)=\begin{pmatrix}a_0&\pi b_0&\pi c_0\\\pi\cdot {}^t b_0&4 b_1&\pi^2 c_1\\\pi\cdot {}^t c_0&\pi^2\cdot {}^tc_1&\pi c_2\end{pmatrix}. \]
Here, $x_0$ and $a_0$ are $(2m \times 2m)$-matrices,  and $z_2$ and $c_2$ are $(2m' \times 2m')$-matrices, etc.
In addition, the diagonal entries of $a_0$ and $c_2$ are divisible by $2$.
Then we can compute $q^N$ in Theorem 4.3 such that
$$N=(2(m')^2+4m m'+6m+9m'+4)-(4m+6m'+4m m'+2)=2(m')^2+2m+3m'+2.$$

To compute $\# \tilde{G}(\kappa)$, we describe
$\varphi_{\kappa}=\varphi_{0, \kappa}\times \varphi_{1, \kappa} : \tilde{G}(\kappa) \longrightarrow \mathrm{O}(\bar{V_0}, \bar{h_0})(\kappa)\times \mathrm{O}(\bar{V_1}, \bar{h_1})(\kappa)$
explicitly in terms of  formal matrices.
Recall that a formal matrix form of an element $g\in \tilde{G}(\kappa)$, as an element of $\tilde{M}(\kappa)$, is
$\begin{pmatrix}x_0&\pi y_0&\pi z_0\\\pi x_1&1+\pi^2 y_1&\pi^2 z_1\\x_2&\pi y_2&z_2\end{pmatrix}$.
Then under the morphism $\varphi_{\kappa}$, $g$ maps to
$\begin{pmatrix}x_0&0\\x_1&1\end{pmatrix} \times \begin{pmatrix}z_2\end{pmatrix}$.

If we define the subgroup $H$ of $\tilde{G}(\kappa)$ by setting
$z_0=0, y_1=0, z_1=0, x_2=0, y_2=0$, then we can easily show that
$\varphi_{\kappa}|_{H}$ is surjective by observing equations defining $H$, which induces the surjectivity of $\varphi_{\kappa}$.
Note that equations defining $H$, as a subgroup of $\tilde{M}(\kappa)$,
can be obtained by observing each block of the formal matrix equation ${}^t g \delta g=\delta$, where $g \left(\in \tilde{M}(\kappa)\right)$ is as above setting.
  Here,  the sum and the multiplication in the formal matrix equation are to be interpreted as in Section 5.3 of \cite{GY}.

In addition, based on the formal matrix equation ${}^t g \delta g=\delta$, the kernel of $\varphi_{\kappa}$ is defined by the following equations:
\[ \delta y_0=0, \delta z_0+{}^tx_2 \delta'=0, \bar{u}y_1+(\bar{u}y_1)^2=0, z_1+{}^ty_2\delta'=0. \]
Here, $\bar{u}$ is such that $\pi^2=2u$ for a unit $u\in A$ and $\bar{u}$ is the reduction of $u$ in $\kappa$.
Thus, $ \mathrm{Ker~}\varphi_{\kappa}=(\textbf{A}^{l}\times (\mathbb{Z}/2\mathbb{Z})^{1})(\kappa)=(\textbf{A}^{4m m'+2m'}\times \mathbb{Z}/2\mathbb{Z})(\kappa)$, i.e. $l=4m m'+2m'$ and $\beta=1$.
Therefore Conjecture 5.10 is verified for this example.

In conclusion, $\# \tilde{G}(\kappa)=\#(\mathrm{O}(2m+1, \bar{h_0})(\kappa))\cdot \#(\mathrm{O}(2m', \bar{h_1})(\kappa))\cdot q^{4m m'+2m'}\cdot 2$
and based on Theorem 4.3, the local density of $(L, h)$ is
$$\beta_L=1/2 \cdot q^N \cdot q^{-\mathrm{dim~} G} \cdot \#\tilde{G}(\kappa)=q^{m+4m'+2-2m^2}\cdot \#(\mathrm{O}(2m+1, \bar{h_0})(\kappa))\cdot \#(\mathrm{O}(2m', \bar{h_1})(\kappa)).$$
\end{Exa}
\textit{}

For other lattices such as hermitian lattices, one can still construct a group homomorphism
$$\varphi_{\kappa}=\prod_i \varphi_{i, \kappa} : \tilde{G}(\kappa) \rightarrow \prod_i \textit{(classical group)}.$$
Note that the paper \cite{C2} gives an explicit construction of such morphism for ramified hermitian lattices defined over an unramified extension of
$\mathbb{Z}_2$. Then one can set up the conjecture as Conjecture 5.10 that
\[\textit{$\varphi_{\kappa}$ is surjective and its kernel is isomorphic to $(\textbf{A}^{l}\times (\mathbb{Z}/2\mathbb{Z})^{\beta})(\kappa)$, as sets.}\]
 By proving this as similar to Theorem 5.4 and Remark 5.11 case by case,  $\# \tilde{G}(\kappa)$ can be computed explicitly
and accordingly, the local density is obtained.


\begin{thebibliography}{99}

    \bibitem{BLR} S. Bosch, W. L$\ddot{\mathrm{u}}$tkebohmert, and M. Raynaud,  \textit{N$\acute{\mathrm{e}}$ron Models}, Ergeb. Math. Grenzgeb.(3) 21, Springer, Berlin, 1990
 \bibitem{C1} S. Cho, \textit{Group schemes and local densities of quadratic lattices in residue characteristic 2}, To appear in \textit{Compositio Math.}
 \bibitem{C2} S. Cho, \textit{Group schemes and local densities of ramified hermitian lattices in residue characteristic 2}, arXiv:1210.7894
 \bibitem{CS} J. H. Conway  and J. A. Sloane, \textit{Low-dimensional lattices, IV: The mass formula}, Proc. Roy. Soc. London Ser. A 419, 259-286, 1988
\bibitem{GY}  W. T. Gan and J.-K. Yu,  \textit{Group schemes and local densities}, Duke Math. J. 105, 497-524, 2000
\bibitem{H}  R. Hartshorne, \textit{Algebraic Geometry}, Grad. Texts in Math. 52, Springer, New York, 1977
\bibitem{HS} Y. Hironaka and F. Sato, \textit{Local densities of representations of quadratic forms over $p$-adic integers (the non-dyadic case)}, Jounal of Number Theory, vol. 83, 106-136, 2000
\bibitem{J} R. Jacobowitz, Hermitian forms over local fields, American J. Math. 84, 551-465, 1962
\bibitem{K}  Y. Kitaoka, \textit{Arithmetic of Quadratic Forms}, Cambridge Tracts in Math. 106, Cambridge Univ. Press, Cambridge, 1993
\bibitem{KMRT} M.-A. Knus,  A. Merkurjev, M. Rost, and J.-P. Tignol, \textit{The Book of Involutions}, AMS Colloquium Publications, Vol. 44, 1998
\bibitem{O1} O. T. O'Meara, \textit{Quadratic forms over local fields}, American J. Math. 77, 87-116, 1955
\bibitem{O2}  O. T. O'Meara, \textit{Introduction to Quadratic Forms}, reprint of 1973 ed., Classics Math., Springer, Berlin, 2000
\bibitem{Sa} C.-H. Sah, \textit{A note on hermitian forms over fields of characteristic 2}, American J. Math. 86, 262-270, 1964
\end{thebibliography}
\end{document}